\newtheorem{lem}{Lemma}
\theoremstyle{definition}
\def\tb{t_{\bullet}}
\def\tw{t_{\circ}}
\title{The two-point function of bicolored planar maps}
\author{\'Eric Fusy}
\address{LIX, \'Ecole Polytechnique, 91120 Palaiseau, France. Academic year 2014-2015: PIMS-CNRS, University of British Columbia, Vancouver, BC, Canada.}
\email{fusy@lix.polytechnique.fr}
\author{Emmanuel Guitter}
\address{Institut de Physique Th\'eorique, CEA, IPhT, 91191 Gif-sur-Yvette, France, CNRS, URA 2306}
\email{emmanuel.guitter@cea.fr}
\begin{document}
\maketitle

\begin{abstract}
We compute the distance-dependent two-point function of vertex-bicolored planar maps, i.e.,\ maps whose vertices are colored in black and white
so that no adjacent vertices have the same color. By distance-dependent two-point function, we mean the generating function of these maps 
with both a marked oriented edge and a marked vertex which are at a prescribed distance from each other. As customary, the
maps are enumerated with arbitrary degree-dependent face weights, but the novelty here is that we also introduce color-dependent 
vertex weights. Explicit expressions are given for vertex-bicolored maps with bounded face
degrees in the form of ratios of determinants of fixed size. Our approach is based on a slice decomposition of maps which relates 
the distance-dependent two-point function to the coefficients of the continued fraction expansions of some distance-independent
map generating functions. Special attention is paid to the case of vertex-bicolored quadrangulations and hexangulations, whose two-point functions
are also obtained in a more direct way involving equivalences with hard dimer statistics. A few consequences of our results,
as well as some extension to vertex-tricolored maps, are also discussed.
\end{abstract}

\section{Introduction}
\label{sec:introduction}
Distance properties within planar maps have raised a lot of interest in the recent years and led to many remarkable results on the
statistics of distance correlations within families of random maps. Still many questions are not yet solved, and many improvements
of the present results, although quite natural, remain challenging. One of the simplest characterization of the distance statistics within maps
is probably the \emph{distance-dependent two-point function} which, roughly speaking, enumerates maps with two ``points" (typically edges or vertices) at a 
\emph{ fixed given graph distance} within the map. Such two-point functions were first computed in \cite{GEOD} for general 
families of bipartite planar maps with controlled face degrees (including the simplest case of quadrangulations). Although this is not quite 
the method used in \cite{GEOD},
it has now become clear that the simplest way to get two-point functions is via a distance-preserving bijection between maps
and tree-like objects called mobiles, originally
found by Schaeffer \cite{SchPhD,CMS09}  (rephrasing a bijection due by Cori and Vauquelin \cite{CoriVa}) in the case of quadrangulations
and later generalized to the case of arbitrary  maps \cite{BDG04}.
More recently, a similar bijection extending Schaeffer's ideas, due to Ambj\o rn and Budd, has made it possible to compute the two-point function 
of maps with arbitrary large face degrees, controlled by both their number of edges and faces \cite{AmBudd}, and more generally  
that of bipartite maps or hypermaps \cite{BFG} with arbitrarily large face degrees. 

In this quest for two-point functions, a conceptual progress was made in \cite{BG12} where it was realized that, due to the coding of general maps by mobiles, the distance-dependent two-point function of some given ensemble of maps was somehow hidden in the coefficients of the continued fraction expansions of some distance-independent generating functions for the same maps. This discovery made it possible to use the whole machinery of continued fractions to obtain very general expressions
for two-point functions of maps with bounded face degrees in the form of ratios of symplectic Schur functions, themselves expressible
in terms of determinants of a fixed size (typically given by the maximal face degree). 

Another particularly elegant explanation for this connection, which avoids the recourse to mobiles, is via the so-called \emph{slice decomposition}. 
Starting with maps having a marked face of controlled degree and a marked vertex, the slice decomposition consists in
cutting the maps along geodesic (i.e.,\ shortest) paths from the marked face to the marked vertex, creating pieces of maps 
called slices. The mathematical translation of this decomposition is that slice generating functions are nothing but the continued fraction expansion
coefficients of the generating function of the original maps \cite{BG12}. The two-point function is then easily recovered
from the slice generating functions.

The purpose of this paper is to apply the technique of \cite{BG12}, in its slice decomposition formulation, to compute the distance-dependent
two-point function of \emph{vertex-bicolored planar maps}, i.e.,\ maps whose vertices are colored in black and white
so that no adjacent vertices have the same color (note that these maps are necessarily bipartite). Beside controlling 
the face degrees by assigning degree-dependent face weights, the novelty of this paper is that we also incorporate two
different vertex weights: a weight $t_\bullet$ for black vertices and a weight $t_\circ$ for white vertices.
Our results therefore generalize those of \cite{BG12} for bipartite maps by keeping a control on the vertex colors.

The paper is organized as follows: in Section~\ref{sec:slicedecomp}, we first recall the mechanism of the slice decomposition
by applying it to vertex-bicolored planar maps with a marked face of fixed degree $2n$ and a marked vertex (Section~\ref{sec:sclidecomposition}).
For short, let us call these maps ``pointed maps with a boundary of length $2n$".
We then recall in Section~\ref{sec:continuedfractions} how one recovers from this decomposition the slice generating functions as
the coefficients of the continued fraction of the generating function for pointed maps with boundaries of arbitrary (but controlled) lengths.
This yields expressions for the slice generating functions in terms of \emph{Hankel determinants}, which are determinants of matrices whose elements
are themselves generating functions
of pointed maps with boundaries of fixed increasing lengths.  Section~\ref{sec:twopoinfunction} shows the connection between slice generating functions and two-point functions while Section~\ref{sec:recusiveislice} establishes non-linear systems of equations which implicitly determine
the slice generating functions \footnote{in practice, we however do not know how to solve directly these non-linear systems (apart from the simple
case of quadrangulations) and this is why we recourse to the continued fraction approach.}. Section~\ref{sec:expressionsFn} is
devoted to obtaining a tractable expression for the generating function of pointed maps with a boundary of length $2n$,
as required to compute our Hankel determinants. This expression is based on so-called \emph{conserved quantities}
introduced in Section~\ref{sec:conservedquantities}, and made explicit in Section~\ref{sec:expressionsFnexpli}. The explicit
computation of the Hankel determinants is presented in Section~\ref{sec:computationHankel}. They come in two families, a simpler
one, computed in Section~\ref{sec:computationHone} and a more involved one, containing most of the spicing due to bicoloring and computed in \ref{sec:computationHzero}.
Our final results are gathered in Section~\ref{sec:finalresult} where we also give, as a simple application, the first terms in the expansion in $t_\bullet$
and $t_\circ$ of the two-point functions of quadrangulations and hexangulations. Section~\ref{sec:harddimers} presents a completely different
approach to compute the Hankel determinants via an  equivalence with hard dimer statistics on segments. Indeed, the Hankel determinants may
be shown to enumerate sets of paths on appropriate graphs, which are so constrained that their configurations reduce to those of dimers
on one or a few linear segments. This approach was used in \cite{BG12} in the case of (uncolored) quadrangulations and we generalize
it to vertex-bicolored quadrangulations in Section~\ref{sec:harddimersquad} at the price of introducing dimers with parity-dependent weights. 
We also extend the method to the more involved case of vertex-bicolored hexangulations in \ref{sec:harddimershex}. We conclude in 
Section~\ref{sec:conclusion} where we discuss a few extensions of our results. First, we use our solution for vertex-bicolored quadrangulations 
to derive in Section~\ref{sec:otherintregquad} the solution of new sets of integrable equations, in connection with irreducible quadrangulations.
We then present in Section~\ref{sec:oherintegthreecolor} the solution of a particular vertex-tricolored problem, in connection with Eulerian triangulations. 
A few side results or technical derivations are presented in Appendices A, B and C.

\section{Slice decomposition and continued fractions}
\label{sec:slicedecomp}
\subsection{The slice decomposition}
\label{sec:sclidecomposition}
A \emph{vertex-bicolored planar map} denotes a connected graph embedded on the sphere whose 
vertices are colored, say in black and white, so that no two adjacent vertices have the same color.
Note that a vertex-bicolored map is necessarily bipartite, i.e.,\ with all its faces of even degree.
Conversely, a bipartite map has two vertex bicolorations, which are obtained from one another by a color switch. 
The map is said to be \emph{rooted} if it has a marked oriented edge (the root edge), and more precisely 
\emph{black-rooted}, (resp.\ white-rooted) if the origin of this edge is a black (resp.\ white) vertex,
hereafter called the root vertex. 
The face to the right of the root edge is called the root face and we shall call the \emph{boundary} the set of
vertices and edges incident to the root face, supposedly oriented clockwise around the root face
(i.e.,\ counterclockwise around the rest of the map). 
Finally, the map is said to be \emph{pointed} if it has a marked vertex (the pointed vertex).

Our goal here is to compute a number of generating functions for these vertex-bicolored maps
with a control on the degrees of their faces by assigning a weight $g_k$ to each face 
of degree $2k$, but also with \emph{a control on the number of black and white vertices} independently 
by giving them different weights, say a weight $t_\bullet$ to each black vertex and a weight $t_\circ$ to each white vertex.

The main result of this paper is an expression, depending implicitly on $t_\bullet$, $t_\circ$ and all the $g_k$'s, 
for the \emph{distance dependent two-point function} of vertex-bicolored 
planar maps, which is the generating function of, say pointed black-rooted
\footnote{Clearly, the generating function of pointed white-rooted bicolored planar maps with a root edge
whose white (resp.\ black) extremity is at graph distance $i$ (resp.\ $i-1$) from the pointed vertex is then obtained
by simply exchanging $t_\bullet$ and $t_\circ$.}
planar maps with a root edge
whose black (resp.\ white) extremity is at graph distance $i$ (resp.\ $i-1$) from the pointed vertex for any given $i\geq 1$.

To get this expression, we shall essentially follow the same procedure as in \cite{BG12}, which consists in
relating the two-point function to the coefficients of the continued fraction expansion of some simpler generating
function (the so-called resolvent in the matrix integral language) for maps with a control on the degree of
their root face.
\begin{figure}
\begin{center}
\includegraphics[width=5cm]{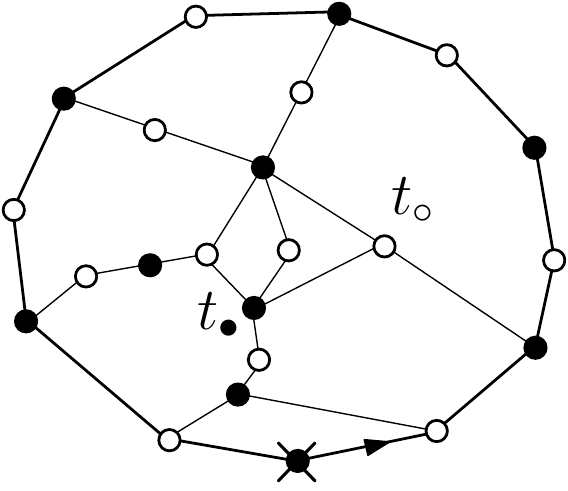}
\end{center}
\caption{An example of black-rooted bicolored planar map with root face of degree $12$. Each black vertex receives
a weight $t_\bullet$ and each white vertex a weight $t_\circ$, except for the root vertex (origin of the root edge
marked by an arrow) which does not receive the weight $t_\bullet$, as indicated here conventionally by
putting a cross on that vertex.}
\label{fig:rootedmap}
\end{figure}

More precisely, let us define $F_n^{\bullet}\equiv F_n^{\bullet}(\{g_k\}_{k\geq 1},t_\bullet,t_\circ)$ as the generating function for
black-rooted bicolored planar maps with a root face of degree $2n$. By convention, we decide to assign a weight $1$
to the root face (instead of $g_n$) as well as a weight $1$ to the root vertex (instead of $t_\bullet$),
see Figure \ref{fig:rootedmap} for an illustration.
We shall also consider the case of maps which are both black-rooted and pointed in such a way that the root vertex 
is at distance $d_\bullet \leq d$ from the pointed vertex and no boundary vertex
is at distance strictly less than $d_\bullet$ from the pointed vertex (in other words, among all boundary vertices
the root vertex is one of the closest ones to the pointed vertex).
We shall denote by $F_n^{\bullet}(d)$ the corresponding generating function, with now the convention that the pointed vertex receives a weight $1$
(instead of $t_\bullet$ or $t_\circ$ depending on its color) while the root vertex receives its normal weight $t_\bullet$ (unless it is itself the pointed
vertex, i.e.,\ when $d_\bullet=0$). Note that, in particular 
\begin{equation*}
F_n^{\bullet}=F_n^{\bullet}(0)\ . 
\end{equation*}
We may finally define similarly generating functions $F_n^{\circ}$ and $F_n^{\circ}(d)$, now for bicolored planar maps which 
are white-rooted instead of black-rooted. Clearly, by a symmetry which consists in simply flipping the map and reversing the orientation
of the root-edge, keeping all colors unchanged, we have:
\begin{equation}
t_\bullet F_n^{\bullet}(\{g_k\}_{k\geq 1},t_\bullet,t_\circ) = t_\circ F_n^{\circ}(\{g_k\}_{k\geq 1},t_\bullet,t_\circ)\ .
\label{eq:FbulletFcirc}
\end{equation}
\begin{figure}
\begin{center}
\includegraphics[width=14cm]{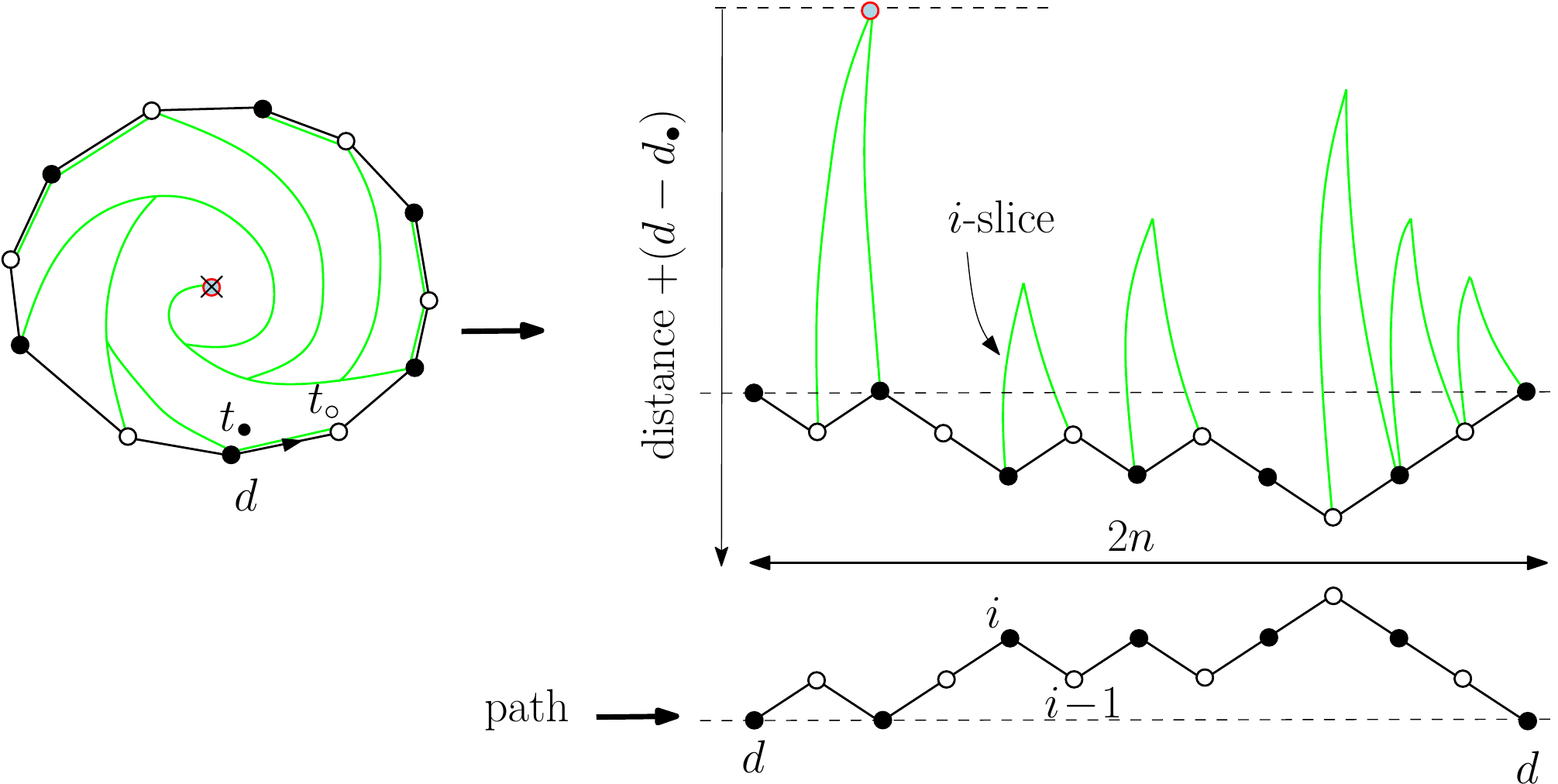}
\end{center}
\caption{Schematic picture of the slice decomposition of a map contributing to $F_n^\bullet(d)$. The pointed vertex 
is represented in red filled with light blue and receives no vertex weight (as indicated by the cross). When each boundary vertex
is labelled by its distance to the pointed vertex plus $d-d_\bullet$ ($d_\bullet$ being the distance from the root vertex to the pointed vertex), 
the sequence of labels forms a path from height $d$ to height $d$ staying above height $d$ (represented here at the bottom of the
figure). Cutting along leftmost geodesic paths from the boundary vertices to the pointed vertex results 
in a decomposition into slices (upper right part of the figure) with an $i$-slice corresponding to each descending step 
of the path from height $i$ to height $i-1$ (note that, since distances are measured from the top vertex, the boundary after
cutting does not give the desired path but its horizontally reversed image).}
\label{fig:rootedpointeddecomp}
\end{figure}
\begin{figure}
\begin{center}
\includegraphics[width=14cm]{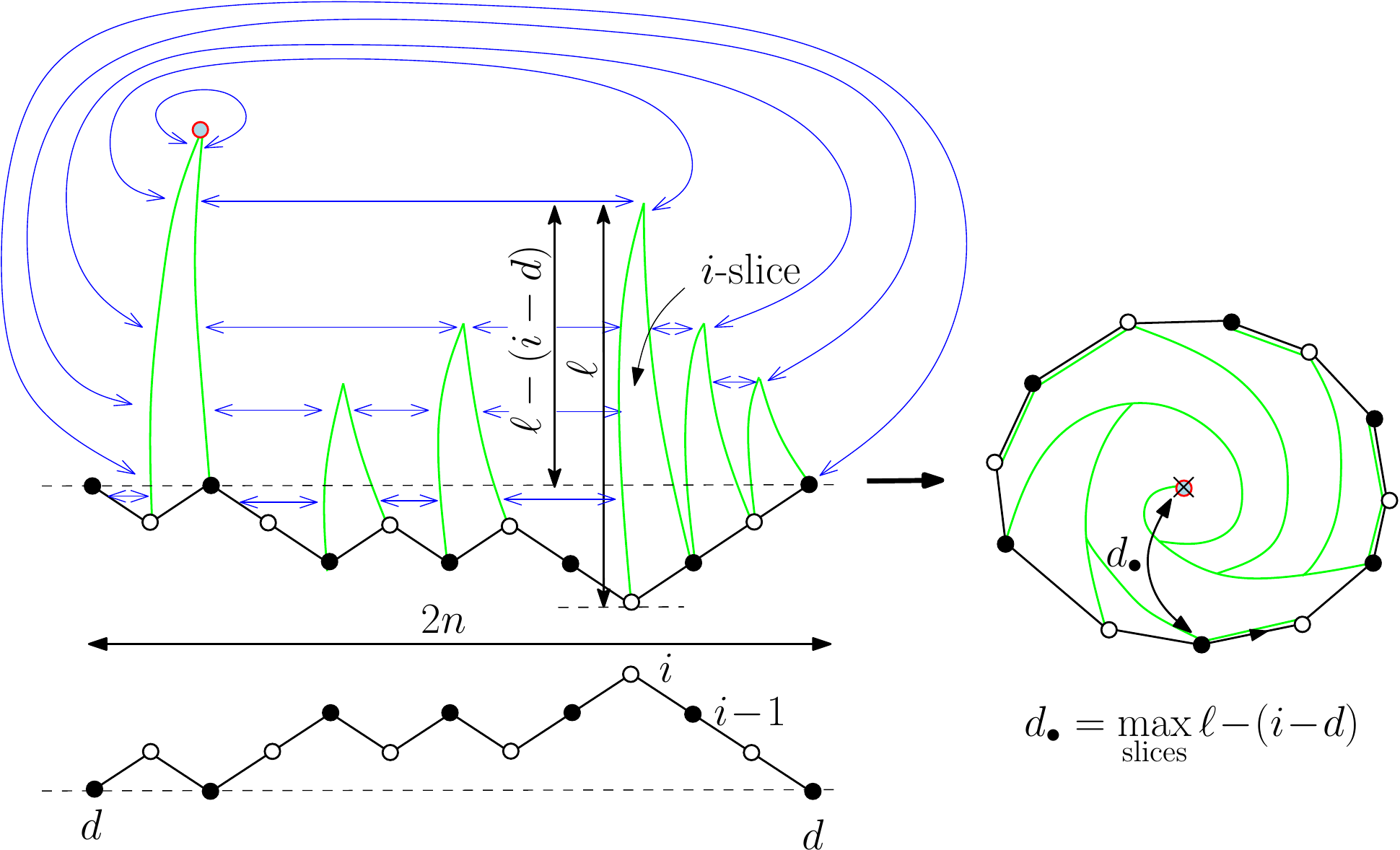}
\end{center}
\caption{Schematic picture of the reverse construction of the slice decomposition of Figure \ref{fig:rootedpointeddecomp}.
The map is recovered by gluing the left and right boundaries of the successive slices as indicated by blue arrows.
The actual distance $d_\bullet$ from the root vertex to the pointed vertex is the maximum over all slices
of $\ell-(i-d)$.}
\label{fig:sliceconcatenation}
\end{figure}
\begin{figure}
\begin{center}
\includegraphics[width=9cm]{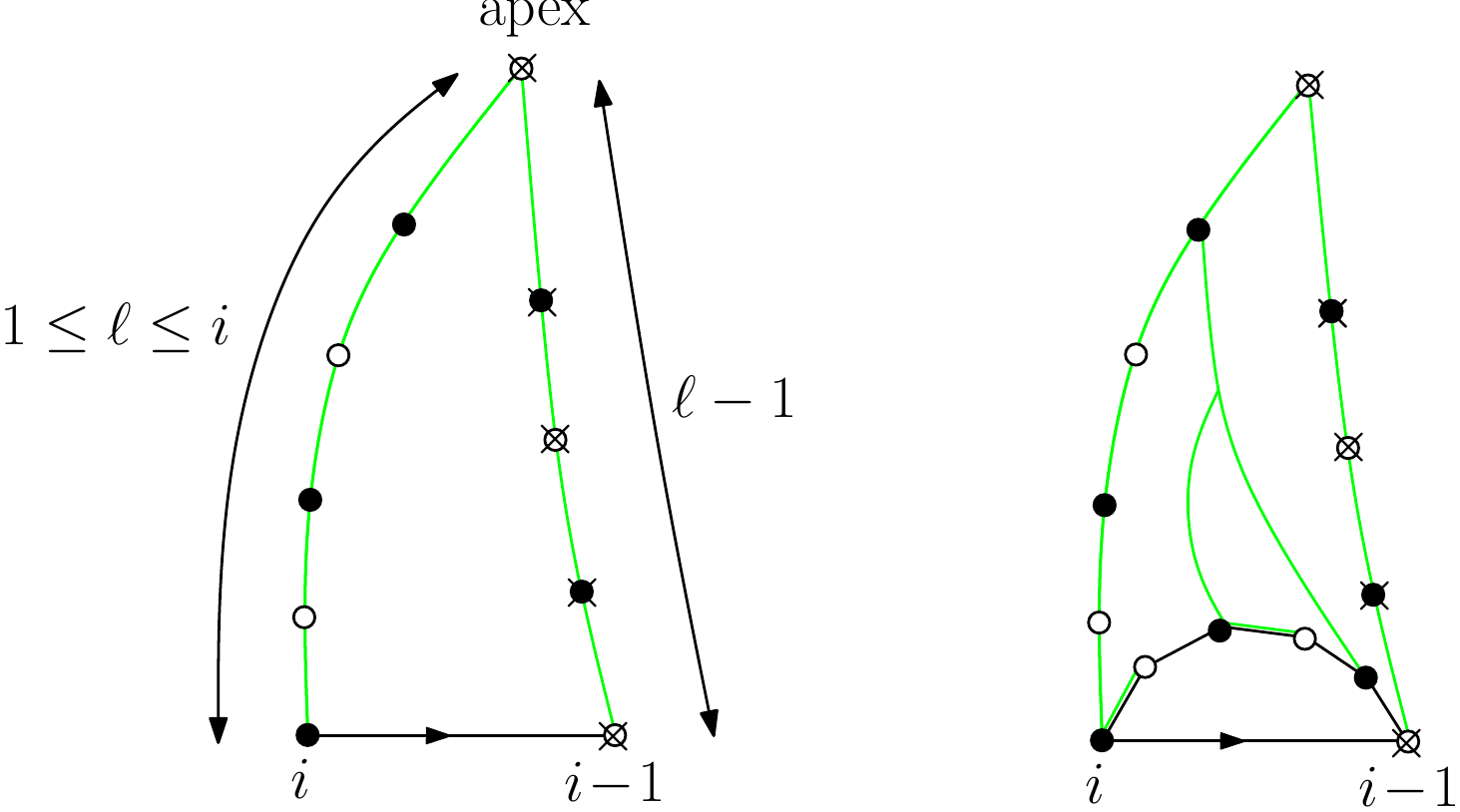}
\end{center}
\caption{Left: schematic picture of a black $i$-slice. Its left boundary is a geodesic path from the vertex labelled $i$ to the apex,
of length $\ell$ for some $1\leq \ell \leq i$. The right boundary is the \emph{unique} geodesic path from the 
vertex labelled $i-1$ to the apex and has length $\ell-1$. All vertices receive vertex weights, except those of the right boundary 
(which includes the apex by convention). Right: schematic picture of the slice decomposition of this $i$-slice (see text).}
\label{fig:islice}
\end{figure}
As discussed in \cite{BG12}, an expression for $F_n^{\bullet}(d)$ can
be obtained via a so-called \emph{slice decomposition} as follows
\footnote{As in \cite{BG12}, most of the study can alternatively be done using mobiles.}: 
let us view the maps enumerated by  $F_n^{\bullet}(d)$ as drawn 
in the plane with the root face as external face and let us label each vertex on the boundary by its distance to the pointed vertex plus 
$(d-d_\bullet)$ ($d_\bullet$ being the distance of the root vertex to the pointed vertex. In particular the root vertex receives the label $d$). 
The sequence of these labels when going counterclockwise around the map from the root vertex may be viewed as heights 
of a path of length $2n$ made of $+1$ or $-1$ steps (each associated to an edge side incident to the root face), starting and 
ending at height $d$, and remaining above height $d$. The path is naturally colored alternatively in black and white (according to the color of the underlying boundary vertex). We may then draw from each boundary vertex its leftmost geodesic (i.e.,\ shortest) path
to the pointed vertex, see Figure \ref{fig:rootedpointeddecomp}. The set of these geodesic paths decomposes the map into a number of \emph{slices}, where a slice is
associated to each $-1$ step of the associated path. More precisely, each step $i\to i-1$ gives rise to an $i$-slice,
which is a rooted map with the following properties: its boundary is made of three parts, see Figure \ref{fig:islice}: (i) its base consisting
of a single root edge oriented from a vertex labelled $i$ to a vertex labelled $i-1$, (ii) a left boundary
of length $\ell$ with $1\leq \ell \leq i$ connecting the vertex labelled $i$ to another vertex, the \emph{apex} and
which is a geodesic path within the slice, and (iii) a right boundary of length $\ell-1$ connecting 
the vertex labelled $i-1$ to the apex, and which is the unique geodesic path within the slice between these two vertices. 
By convention, we decide that the apex belongs to the right boundary but not to the left one.
The left and right boundaries are then required to have no common vertex (i.e., they do not meet before reaching the apex). 
The fact that $\ell \leq i$ is simply due to 
the fact that $\ell$ is smaller than the distance $i-(d-d_\bullet)$ from the base vertex of the slice labelled $i$ to the pointed vertex in the map. 
Note that the actual distance $d_\bullet$ from the black-root vertex to the pointed vertex is the maximum of $\ell-(i-d)$ over
all slices. Note also that, when $\ell=1$, the left boundary may stick to the base, in which case the $i$-slice is reduced to a single edge $i\to i-1$.
This degenerate situation occurs whenever the leftmost geodesic path from the boundary vertex labelled $i$
in the original map passes through the next boundary edge counterclockwise around the map (this edge leading to the next boundary vertex counterclockwise around the map, labelled $i-1$).
We shall distinguish black $i$-slices, whose root vertex is black from white $i$-slices, whose root vertex is white.
The fact that no slice is associated to any $+1$ step in the associated path is because in this case, the leftmost
geodesic path from the endpoint (labelled say $i+1$) of the associated $i\to i+1$ boundary edge passes via the origin of the same edge
(labelled $i$), hence sticks to the boundary without creating a slice, see Figure \ref{fig:rootedpointeddecomp}. 
The reverse of the slice decomposition is a simple slice concatenation, as shown in Figure \ref{fig:sliceconcatenation}.

\subsection{Continued fractions}
\label{sec:continuedfractions}

In view of the above slice decomposition, it is natural to introduce the generating function  
$Z_{d,d}^{\bullet\bullet +}(2n)\equiv  Z_{d,d}^{\bullet\bullet +}(2n,\{B_i\}_{i\geq 1},\{W_i\}_{i\geq 1})$ of paths 
made of $+1$ or $-1$ steps, colored alternatively in black and white, starting and ending at black height $d$ and 
remaining above height $d$ (with $d\geq 0$), and where each descending step from a black height $i$ to a white height $i-1$ receives a weight
$B_i$ and each descending step from a white height $i$ to a black height $i-1$ receives a weight
$W_i$ (and with no weights assigned to ascending steps).  More generally, we may define 
$Z_{d,d'}^{\bullet\bullet +}(2n)$ ($d,d'\geq 0$) as enumerating paths with the same weights, now going from a black height $d$ 
to a black height $d'$ (with $d'=d \mod 2$) and remaining above $\min(d,d')$. By obvious generalizations, we shall also consider the quantities 
$Z_{d,d'}^{\circ\circ +}(2n)$ (with $d'=d \mod 2$)
as well as
$Z_{d,d'}^{\bullet\circ +}(2n+1)$ and $Z_{d,d'}^{\circ\bullet +}(2n+1)$
(with $d'=d+1 \mod 2$) according to the color of the extremities of the path.

Following \cite{BG12}, the slice decomposition directly gives rise to the following expressions
\begin{equation}
F_n^{\bullet}(d)=Z_{d,d}^{\bullet\bullet +}(2n,\{B_i\}_{i\geq 1},\{W_i\}_{i\geq 1})\qquad \qquad
F_n^{\circ}(d)=Z_{d,d}^{\circ\circ +}(2n,\{B_i\}_{i\geq 1},\{W_i\}_{i\geq 1})
\label{eq:Fnd}
\end{equation}
where $B_i\equiv B_i(\{g_k\}_{k\geq 1},t_\bullet,t_\circ)$ (resp.\ $W_i=W_i(\{g_k\}_{k\geq 1},t_\bullet,t_\circ)$) is the generating function for 
black (resp.\ white) $i$-slices, see Figure \ref{fig:rootedpointeddecomp}. Each face of degree $2k$ in the slice but the root face receives a weight $g_k$. As for vertex weights
in the slice, they are designed so as
to reproduce after concatenation the proper weights for the vertices in the map. To this end, each vertex of the slice receives
the weight $t_\bullet$ or $t_\circ$ according to its color, except for 
the vertices of the right boundary (including the apex and the base vertex labelled $i-1$) which receive the weight $1$ instead. Indeed, after
concatenation of all slices, all the vertices of the map lying on slice boundaries belong to exactly one left boundary hence
already receive their weight from this boundary, see Figure \ref{fig:rootedpointeddecomp}. This holds except 
for the pointed vertex, which belongs only to right boundaries
hence receives a weight $1$, which is consistent with our convention.
Note also, after concatenation of slices, the distance $d_\bullet$ from the black-root vertex to the pointed vertex is the maximum of $\ell-(i-d)$ over
all slices, hence when $\ell$ varies between $1$ and $i$ for all $i$-slices and with $i$ larger than $d+1$ by construction, 
$d_\bullet$  can be any number between $0$ and $d$.

Taking $d=0$, we deduce in particular
\begin{equation}
F_n^{\bullet}=Z_{0,0}^{\bullet\bullet +}(2n,\{B_i\}_{i\geq 1},\{W_i\}_{i\geq 1})
\qquad \qquad F_n^{\circ}=Z_{0,0}^{\circ\circ +}(2n,\{B_i\}_{i\geq 1},\{W_i\}_{i\geq 1})\ .
\label{eq:pathexpr}
\end{equation}
Recall that, by definition, in both path generating functions, each descending step from a black height $i$ to a white height $i-1$ receives the weight
$B_i$ and each descending step from a white height $i$ to a black height $i-1$ receives the weight $W_i$ (and ascending steps receive
no weight).
With the expressions \eqref{eq:pathexpr}, it is now a standard result that we have the equalities
\begin{equation}
\begin{split}
&\sum_{n\geq 0} F_n^{\bullet} z^n=\frac{1}{\displaystyle{1-z \frac{W_1}{\displaystyle{1-z \frac{B_2}{\displaystyle{1-z \frac{W_3}{\displaystyle{1-z \frac{B_4}{\displaystyle{ 1- \cdots}}}}}}}}}}\\
&\sum_{n\geq 0} F_n^{\circ} z^n=\frac{1}{\displaystyle{1-z \frac{B_1}{\displaystyle{1-z \frac{W_2}{\displaystyle{1-z \frac{B_3}{\displaystyle{1-z \frac{W_4}{\displaystyle{ 1- \cdots}}}}}}}}}}\\
\end{split}
\label{eq:contfrac}
\end{equation}
with the convention $F_0^\bullet=F_0^\circ=1$, so that $B_i$ and $W_i$ for $i\geq 1$ can be viewed as the coefficients
in the continued fractions of the ``resolvents" $\sum_{n\geq 0} F_n^{\bullet} z^n$ and $\sum_{n\geq 0} F_n^{\circ} z^n$. 
Note in particular that, expanding at first order in $z$, eq.~\eqref{eq:FbulletFcirc} yields
\begin{equation}
t_\bullet W_1=t_\circ B_1\ .
\label{eq:WoneBone}
\end{equation}

Now it is also a standard result of the theory of continued fractions that, from the first line in \eqref{eq:contfrac}, we may write
for $i\geq 1$
\begin{equation}
B_{2i}=\frac{h_i^{(0)}}{h_{i-1}^{(0)}}\Big{/}\frac{h_{i-1}^{(1)}}{h_{i-2}^{(1)}}\qquad\qquad\qquad
W_{2i-1}=\frac{h_{i-1}^{(1)}}{h_{i-2}^{(1)}}\Big{/}\frac{h_{i-1}^{(0)}}{h_{i-2}^{(0)}}
\label{eq:BWtoh}
\end{equation} 
where $h_i^{(0)}$ and $h_i^{(1)}$ are Hankel determinants defined from the $F_n^{\bullet}$ as
\begin{equation*}
h_i^{(0)}=\det (F^{\bullet}_{n+m})_{0\leq n,m \leq i}\qquad\qquad
h_i^{(1)}=\det (F^{\bullet}_{n+m+1})_{0\leq n,m \leq i}
\end{equation*}
with the convention $h_{-1}^{(0)}=h_{-1}^{(1)}=1$. Similarly, the second line in \eqref{eq:contfrac} gives, for $i\geq 1$
\begin{equation*}
B_{2i-1}=\frac{\tilde{h}_{i-1}^{(1)}}{\tilde{h}_{i-2}^{(1)}}\Big{/}\frac{\tilde{h}_{i-1}^{(0)}}{\tilde{h}_{i-2}^{(0)}}\qquad\qquad\qquad
W_{2i}=\frac{\tilde{h}_i^{(0)}}{\tilde{h}_{i-1}^{(0)}}\Big{/}\frac{\tilde{h}_{i-1}^{(1)}}{\tilde{h}_{i-2}^{(1)}}
\end{equation*} 
where
\begin{equation*}
\tilde{h}_i^{(0)}=\det (F^{\circ}_{n+m})_{0\leq n,m \leq i}\qquad\qquad
\tilde{h}_i^{(1)}=\det (F^{\circ}_{n+m+1})_{0\leq n,m \leq i}
\end{equation*}
and again the convention $\tilde{h}_{-1}^{(0)}=\tilde{h}_{-1}^{(1)}=1$. 
Section \ref{sec:computationHankel} below will be devoted to the calculation of the Hankel determinants $h_i^{(0)}$, $h_i^{(1)}$, $\tilde{h}_i^{(0)}$ and
$\tilde{h}_i^{(1)}$, yielding explicit expressions for the slice generating functions $B_i$ and $W_i$.

\subsection{Link with the two-point function}
\label{sec:twopoinfunction}
\begin{figure}
\begin{center}
\includegraphics[width=9cm]{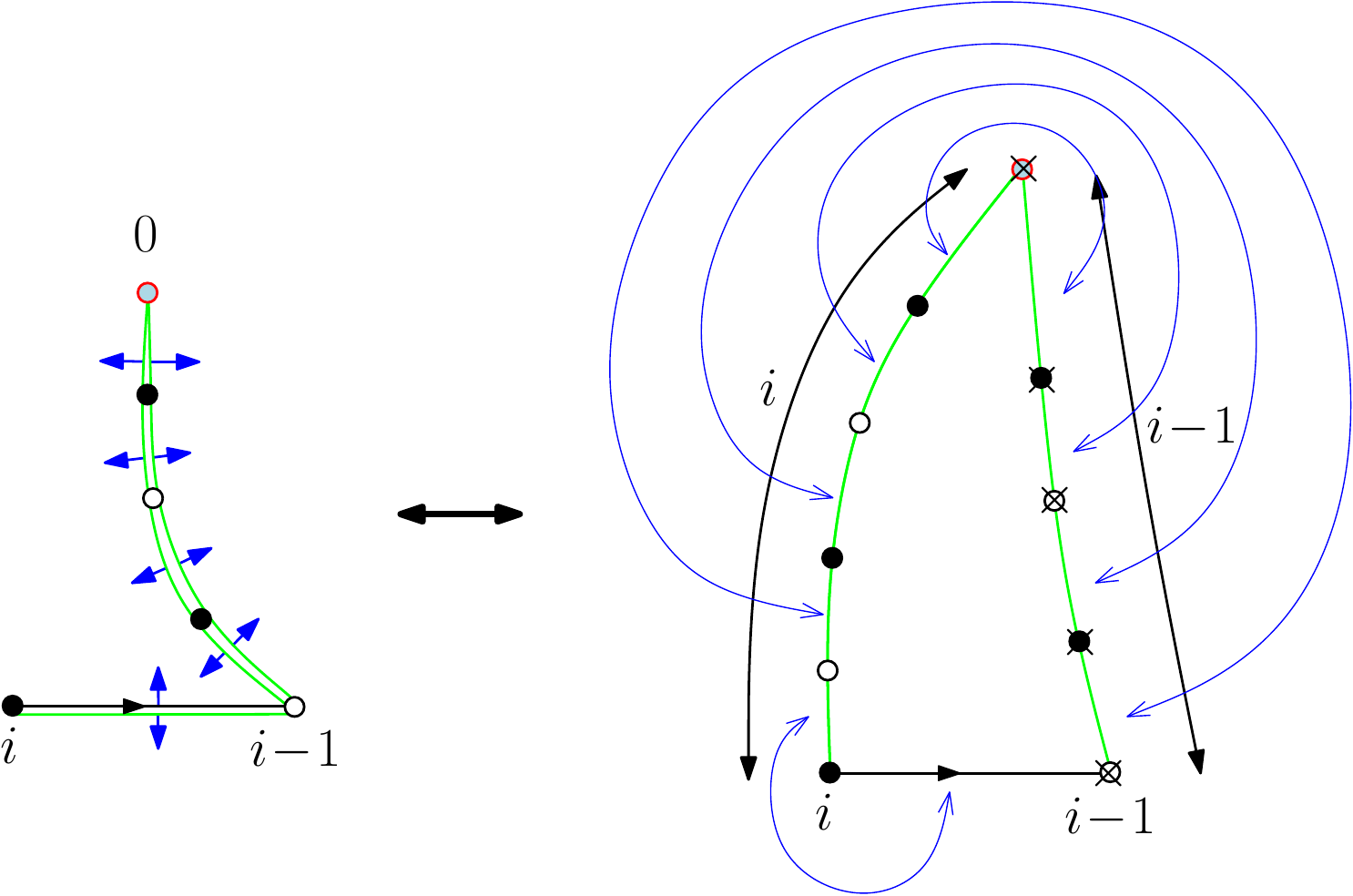}
\end{center}
\caption{Schematic picture of the bijection between maps contributing to the two-point function $G_i^\bullet$
and black $i$-slices with a left boundary of length $i$ (i.e.,\ the maximal allowed value).}
\label{fig:twopoint}
\end{figure}
The reason of our interest in the slice generating functions $B_i$ and $W_i$ is their intimate link
with the distance dependent two-point function. 
Indeed, let us consider a pointed black-rooted bicolored planar map with a root edge
whose black (resp.\ white) extremity is at graph distance $i$ (resp.\ $i-1$) from the pointed vertex.
By cutting the map along its leftmost geodesic path from the root vertex to the pointed vertex (the first step of the geodesic
path being the root edge itself), the resulting object is precisely a black $i$-slice
whose left boundary has the maximal allowed length $\ell=i$, see Figure \ref{fig:twopoint}. This slice must moreover 
contain at least one face. For $i>1$, this is automatic but for $i=1$, we must eliminate the slice 
reduced to a single base edge. This construction is clearly reversible
so that the two-point function of bicolored planar maps, defined as the generating function $G_i^\bullet$  
of  black-rooted bicolored planar maps with a root edge
whose black (resp.\ white) extremity is at graph distance $i$ (resp.\ $i-1$) from the pointed vertex,
is nothing but the generating function of black $i$-slices with a left boundary of length $i$ (and not reduced to the base edge
if $i=1$). For $i>1$, there is an obvious bijection between black $i$-slices with a left boundary of length $1\leq \ell<i$ and
black $i-1$-slices of arbitrary left boundary of length $\ell$ ($1\leq \ell \leq i-1$) by simply relabeling the root edge $i-1\to i-2$.
We immediately deduce the relations
\begin{equation}
G_1^\bullet=t_\circ (B_1-t_\bullet) \qquad \hbox{and} \quad G_{2i}^\bullet=t_\bullet (B_{2i}-B_{2i-1})\ ,\quad G_{2i+1}^\bullet=t_\circ (B_{2i+1}-B_{2i})\quad i>1 \ ,
\label{eq:teopointbullet}
\end{equation}
where we have re-introduced the weight of the pointed vertex.
We may define alternatively the generating function $G_i^\circ$  
of  white-rooted bicolored planar maps with a root edge
whose white (resp.\ black) extremity is at graph distance $i$ (resp.\ $i-1$) from the pointed vertex.
Obviously, we have the relations
\begin{equation}
G_1^\circ=t_\bullet (W_1-t_\circ) \qquad \hbox{and} \quad G_{2i}^\circ=t_\circ(W_{2i}-W_{2i-1})\ , \quad G_{2i+1}^\circ=t_\bullet (W_{2i+1}-W_{2i})\quad i>1 \ .
\label{eq:teopointcirc}
\end{equation}

\subsection{Recursive equations for $i$-slice generating functions}
\label{sec:recusiveislice}
As slice generating functions, $B_i$ and $W_i$ satisfy non-linear recursion relations which can
be obtained as follows: assuming $\ell>1$, the face on the left of the root edge of an $i$-slice 
is necessarily different from the root face and has, say degree $2k$. The set of distances to the apex of the successive
vertices, when going around this face of degree $2k$ from the root vertex to the other extremity of the root edge
forms a path made of $+1$ and $-1$ steps, of length $2k-1$ from height $i$ to height $i-1$. Drawing 
from each of these vertices its leftmost geodesic path to the apex results into a slice decomposition 
of the $i$-slice, see Figure \ref{fig:islice}, from which we immediately deduce (see \cite{BG12} for more explanations)
\begin{equation}
\begin{split}
&B_i=t_\bullet+\sum_{k\geq 1} g_k Z_{i,i-1}^{\bullet\circ}(2k-1,\{B_j\}_{j\geq 1},\{W_j\}_{j\geq 1})\\
&W_i=t_\circ+\sum_{k\geq 1} g_k Z_{i,i-1}^{\circ\bullet}(2k-1,\{B_j\}_{j\geq 1},\{W_j\}_{j\geq 1})\ .\\
\label{eq:recurBiWi}
\end{split}
\end{equation}
Here, $Z_{d,d'}^{\bullet\circ}(2k-1)$ ($d,d'\geq 0,\ d'=d+1 \mod 2$) denotes the generating function 
of paths of length $2k-1$ made of $+1$ or $-1$ steps, colored alternatively in black and white, starting at black height $d$ and 
ending at white height $d'$ and remaining above height $0$, where each descending step from a black height 
$i$ to a white height $i-1$ receives a weight
$B_i$ and each descending step from a white height $i$ to a black height $i-1$ receives a weight
$W_i$ (and with no weights assigned to ascending steps).  
The quantity $Z_{d,d'}^{\circ\bullet }(2k-1)$ is defined similarly in an obvious way.
The first term $t_\bullet$ (resp.\ $t_\circ$) in \eqref {eq:recurBiWi} arises as the contribution, when $\ell=1$,
of the black (resp.\ white) $i$-slice reduced to a single $i\to i-1$ edge (which receives the 
weight of the root vertex only since the other vertex belongs to the right boundary).

As an example, let us consider the case of quadrangulations, i.e.,\ maps whose all faces 
have degree $4$, by taking $g_k=\delta_{k,2}$
\footnote{We decide not to keep track of 
the number of faces via an arbitrary weight $g_2$ since, by the Euler relation, 
this number of faces is the total number of black and white vertices minus $2$. A similar remark holds for hexangulations,
where this time the double number of faces is the total number of black and white vertices minus $2$.}. 
The two-point functions $G_i^\bullet$ and  $G_i^\circ$ for these maps are 
obtained via \eqref{eq:teopointbullet} and \eqref{eq:teopointcirc} where $B_i$ 
and $W_i$ are solutions of the system
\begin{equation}
\begin{split}
&B_i=t_\bullet+Z_{i,i-1}^{\bullet\circ}(3,\{B_j\}_{j\geq 1},\{W_j\}_{j\geq 1})=t_\bullet+B_i(W_{i-1}+B_i+W_{i+1})\\
&W_i=t_\circ+Z_{i,i-1}^{\circ\bullet}(3,\{B_j\}_{j\geq 1},\{W_j\}_{j\geq 1})=t_\circ+W_i(B_{i-1}+W_i+B_{i+1})\\
\end{split}
\label{BiWiquad}
\end{equation}
valid for all $i\geq 1$ with the convention $B_0=W_0=0$. As shown in Appendix A, the solution of these equations 
may be guessed (with the help of a computer), using the same technique as in \cite{GEOD}, based on a perturbative method.

In the case of hexangulations, i.e.,\ maps whose all faces 
have degree $6$, obtained by taking $g_k=\delta_{k,3}$, we get instead
\begin{equation*}
\begin{split}
& \hspace{-1.2cm} B_i=t_\bullet+B_i(B_{i-2}W_{i-1}\!+\!2B_i(W_{i-1}\!+\!W_{i+1})\!+\!B_i^2\!+\!W_{i-1}^2\!+\!W_{i+1}^2\!+\!W_{i-1}W_{i+1}
\!+\!W_{i+1}B_{i+2})\\
& \hspace{-1.2cm} 
W_i=t_\circ+W_i(W_{i-2}B_{i-1}\!+\!2W_i(B_{i-1}\!+\!B_{i+1})\!+\!W_i^2+B_{i-1}^2\!+\!B_{i+1}^2 \!+\!B_{i-1}B_{i+1}\!+B_{i+1}W_{i+2})\\
\end{split}
\end{equation*}
valid for all $i\geq 1$ with the convention $B_0=W_0=B_{-1}=W_{-1}=0$.
Note that, in all generality, the equations \eqref{eq:recurBiWi} form two independent systems of equations,
one involving only $B_i$'s with even indices $i$ and $W_j$'s with odd indices $j$, and one 
involving only $B_i$'s with odd $i$ and $W_j$'s with even $j$.

Clearly, in the definition of $i$-slices, the quantity $i$ only acts as an upper bound on the length of the left boundary.
We can release this upper bound by simply letting $i\to \infty$ in which case $B_i$ and $W_i$ tend to well-defined 
limits $B$ and $W$ solutions of
\begin{equation}
\begin{split}
&B=t_\bullet+\sum_{k\geq 1} g_k \mathbb{Z}_{0,-1}^{\bullet\circ}(2k-1,B,W)\\
&W=t_\circ+\sum_{k\geq 1} g_k \mathbb{Z}_{0,-1}^{\circ\bullet}(2k-1,B,W)\ .\\
\label{eq:recurBW}
\end{split}
\end{equation}
where $\mathbb{Z}_{m,n}^{\bullet\circ}(2k-1)\equiv \mathbb{Z}_{m,n}^{\bullet\circ}(2k-1,B,W)$ is the analog of $Z_{m,n}^{\bullet\circ}(2k-1)$
except that heights are allowed to be negative and all descending steps from a black height to a white height receive the same weight $B$ \emph{irrespectively of their height},
and all descending steps from a white height to a black height receive the same weight $W$. Clearly 
$\mathbb{Z}_{m,n}^{\bullet\circ}(2k-1)=\mathbb{Z}_{0,n-m}^{\bullet\circ}(2k-1)$ for all $m,n$. We have a similar definition
of $\mathbb{Z}_{m,n}^{\circ\bullet}(2k-1)$ and in the following, we shall also consider all variants $\mathbb{Z}_{m,n}^{\bullet\bullet}(2k)$,
$\mathbb{Z}_{m,n}^{\bullet\bullet +}(2k)$, ... with obvious definitions.

As an example, eqs.~\eqref{eq:recurBW} give for quadrangulations
\begin{equation}
\begin{split}
&B=t_\bullet+B(B+2W)\\
&W=t_\circ+W(W+2B)\\
\label{eq:recurBWquad}
\end{split}
\end{equation}
and for hexangulations
\begin{equation}
\begin{split}
&B=t_\bullet+B(B^2+3 W^2 + 6 B W)\\
&W=t_\circ+W(W^2 +3 B^2 +6 B W)\ .\\
\label{eq:recurBWhex}
\end{split}
\end{equation}

\section{Expressions for $F_n^\bullet$ and $F_n^\circ$}
\label{sec:expressionsFn}
\subsection{Conserved quantities}
\label{sec:conservedquantities}
\begin{figure}
\begin{center}
\includegraphics[width=11cm]{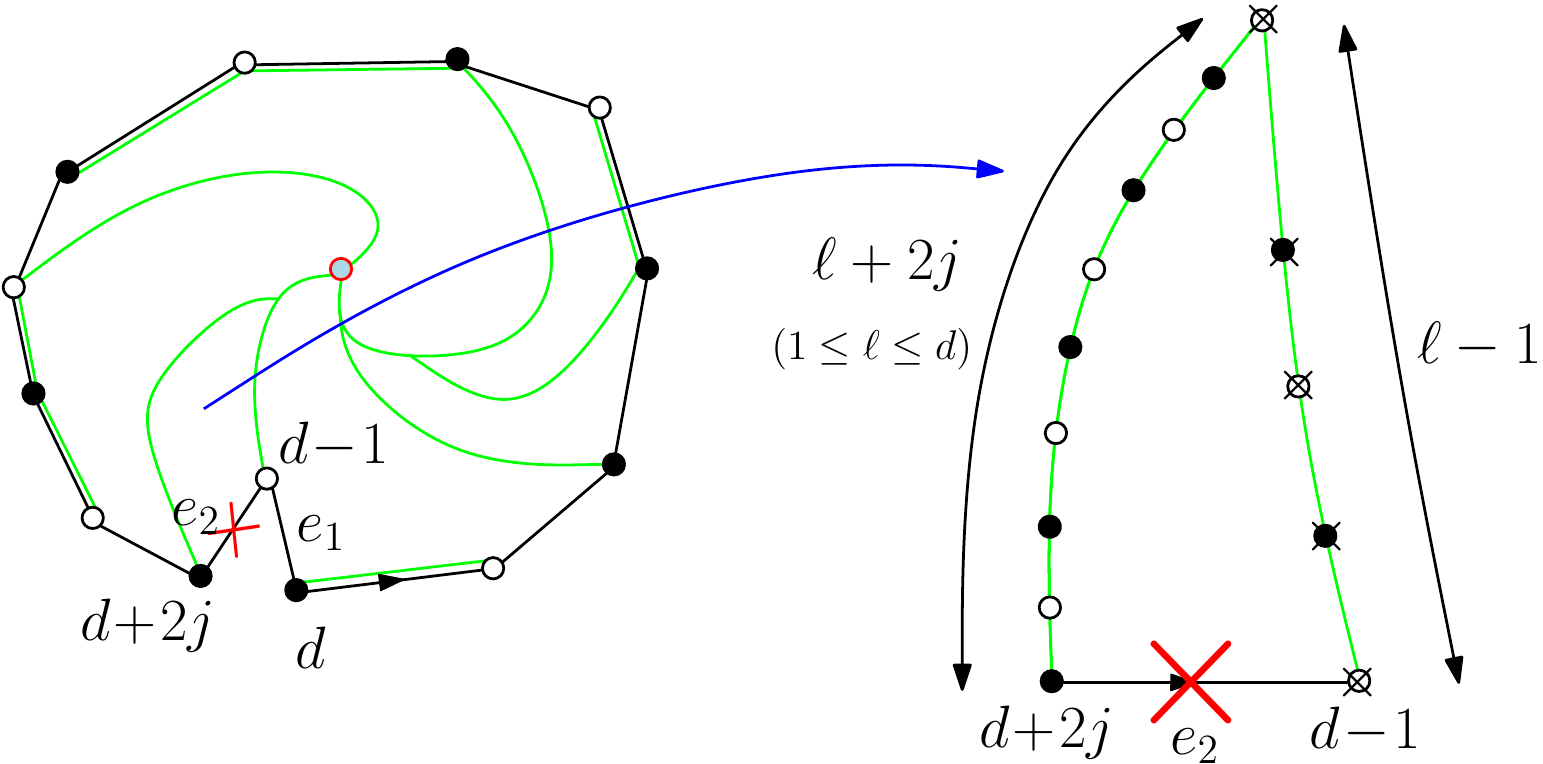}
\end{center}
\caption{Schematic picture of the map obtained from a map contributing to $F_n^\bullet(d)$ and having 
a distance $d_\bullet \geq 1$ between the root vertex and the pointed vertex, by cutting along the
leftmost edge from the root vertex to a neighboring vertex at distance $d_\bullet -1$ from the pointed vertex.
After cutting, this edge is split into two edges $e_1$ and $e_2$, linking vertices with successive labels $d+2j$
(for some $j\geq 1$) , $d-1$
and $d$ counterclockwise, where we use as labels for the boundary vertices $d-d_\bullet$ plus their distance 
to the pointed vertex \emph{using paths avoiding} $e_2$ (as indicated by the red cross). The slice decomposition
of this map gives rise to a new type of slice, with base $e_2$, whose left boundary now has length $\ell+2j$
and right boundary has length $\ell-1$ for some $\ell$ between $1$ and $d$.}
\label{fig:conserved}
\end{figure}

As was done in \cite{BG12}, the quantity $F_n^\bullet=F_n^\bullet(0)$ may be obtained by subtracting from
$F_n^\bullet(d)$ the generating function of those configurations having a distance $d_\bullet$ strictly positive, i.e.,\ 
lying between $1$ and $d$ (recall that $d_\bullet$ is the distance from the root vertex to the pointed vertex). When 
$d_\bullet\geq 1$, the root vertex has some neighbors at distance $d_\bullet-1$ from the pointed vertex which 
necessarily lie strictly inside the map (i.e.,\ not on the boundary) as otherwise, the root vertex would not
be one of the closest vertices to the pointed vertex. Picking the leftmost edge $e$ leading from the root vertex to such a neighbor,
we may duplicate this edge by cutting along it and create a map with a boundary
of length $2n+2$ whose two new boundary edges are the two copies $e_1$ and $e_2$ of the edge $e$, with $e_1$
say the closest to the root vertex, see Figure \ref{fig:conserved}, so that, counterclockwise around the map, $e_2$ links the duplicate
of the root vertex to its chosen neighbor at distance $d_\bullet-1$, and $e_1$ links this neighbor to the original root vertex. 
We may now apply to this new map the same slice decomposition as before,
using now a new graph distance by \emph{preventing paths to go through the edge $e_2$} and labeling
again the boundary vertices by this distance plus $(d-d_\bullet)$. Because of
the new constraint of using only paths avoiding $e_2$, the duplicate of the root vertex now receives a label $d+2j$ for 
some $j\geq 1$ (the fact that the distance 
to the pointed vertex is strictly larger than $d_\bullet$ is due to our choice of leftmost edge, see \cite{BG12} 
for a detailed argument). As for all the other boundary vertices, their distance necessarily increases (weakly) in
the edge cutting procedure so their label also increases, hence remains above $d$. 
Moreover, all the obtained slices are $i$-slices as we defined them before except
for the last slice whose base is $e_2$ which has a left boundary of length $\ell+2j$ for some $\ell$ between $1$ and $d$, 
and a right boundary of length $\ell-1$ (distances in the slice are also measured with paths avoiding $e_2$),
see Figure \ref{fig:conserved}.
By an argument similar to that used to derive  \eqref{eq:recurBiWi}, the generating function for this last slice is
\begin{equation*}
\sum_{k\geq 1} g_k Z_{d+2j,d-1}^{\bullet\circ}(2k-1,\{B_j\}_{j\geq 1},\{W_j\}_{j\geq 1})
\end{equation*}
so that the quantity to be subtracted from $F_n^{\bullet}(d)$ to get $F_n^\bullet$ is
\begin{equation*}
\frac{1}{t_\bullet}\sum_{j\geq 1} Z_{d,d+2j}^{\bullet\bullet+}(2n) \sum_{k\geq 1} g_k Z_{d+2j,d-1}^{\bullet\circ}(2k-1)
\end{equation*}
where the role of the factor $1/t_\bullet$ is to avoid counting the weight of the original root vertex twice.
Using the expression \eqref{eq:Fnd} for $F_n^\bullet(d)$ (and repeating the argument for $F_n^\circ$), we arrive at 
\begin{equation}
\begin{split}
& F_n^\bullet=Z_{d,d}^{\bullet\bullet+}(2n)-\frac{1}{t_\bullet}\sum_{j\geq 1} Z_{d,d+2j}^{\bullet\bullet+}(2n) \sum_{k\geq 1} g_k Z_{d+2j,d-1}^{\bullet\circ}(2k-1)\\
& F_n^\circ=Z_{d,d}^{\circ\circ+}(2n)-\frac{1}{t_\circ}\sum_{j\geq 1} Z_{d,d+2j}^{\circ\circ+}(2n) \sum_{k\geq 1} g_k Z_{d+2j,d-1}^{\circ\bullet}(2k-1)\ .\\
\end{split}
\label{eq:conserved}
\end{equation}
The quantities on the right hand side are called \emph{conserved quantities} to emphasize the fact that their actual values are independent
of $d$ (since the left hand size does not depend on $d$).

In the case of quadrangulations for instance taking $n=1$, we get the following conserved 
quantities
\begin{equation}
\begin{split}
& F_1^\bullet=W_{d+1}-\frac{1}{t_\bullet}B_{d}W_{d+1}B_{d+2} = W_1=W-\frac{1}{t_\bullet}B^2W \\
& F_1^\circ=B_{d+1}-\frac{1}{t_\circ}W_{d}B_{d+1}W_{d+2} = B_1=B-\frac{1}{t_\circ}W^2B \\
\end{split}
\label{eq:F1quad}
\end{equation}
for all $d\geq 0$ (the last two members of the equalities correspond to $d=0$ and $d\to \infty$ respectively).

Note that in this case, a direct proof of this equation may be obtained by writing the relation giving $B_i$ in \eqref{BiWiquad} for $i=d+1$, 
multiplying it by $W_{d}$, then writing the relation giving $W_i$ for $i=d$ and
multiplying it by $B_{d+1}$ and finally subtracting the two. This leads to $t_\circ c_d=t_\bullet \tilde{c}_{d-1}$
where $c_d=B_{d+1}-\frac{1}{t_\circ}W_{d}B_{d+1}W_{d+2}$ and $\tilde{c}_d=W_{d+1}-\frac{1}{t_\bullet}B_{d}W_{d+1}B_{d+2}$.
By symmetry, we also have $t_\bullet \tilde{c}_d=t_\circ c_{d-1}$ so that $c_{2i}=\frac{t_\bullet}{t_\circ} \tilde{c}_{2i-1}=c_0=B_1$
and $\tilde{c}_{2i}=\frac{t_\circ}{t_\bullet} c_{2i-1}=\tilde{c}_0=W_1$ for all $i\geq 1$. This is equivalent to $c_d=B_1$ and 
$\tilde{c}_d=W_1$ for all $d$ thanks to the identity \eqref{eq:WoneBone}. As explained in Appendix A, the use 
of the above two conserved quantities in the case of quadrangulations allows us to directly derive an explicit expression
for $B_i$ and $W_i$ without recourse to the general formalism that we shall develop below.

\subsection{Expression for $F_n^\bullet$ and $F_n^\circ$}
\label{sec:expressionsFnexpli}

Expressions \eqref{eq:conserved} for the conserved quantities are particularly interesting as they give expressions
for $F_n^\bullet$ and $F_n^\circ$ in terms of $B$ and $W$ only, by simply letting $d\to \infty$. 
Indeed, we immediately get
\begin{equation*}
\begin{split}
& F_n^\bullet=\mathbb{Z}_{0,0}^{\bullet\bullet+}(2n)-\frac{1}{t_\bullet}\sum_{j\geq 1} \mathbb{Z}_{0,2j}^{\bullet\bullet+}(2n) \sum_{k\geq 1} g_k 
\mathbb{Z}_{2j,-1}^{\bullet\circ}(2k-1)\\
& F_n^\circ=\mathbb{Z}_{0,0}^{\circ\circ+}(2n)-\frac{1}{t_\circ}\sum_{j\geq 1} \mathbb{Z}_{0,2j}^{\circ\circ+}(2n) \sum_{k\geq 1} g_k 
\mathbb{Z}_{2j,-1}^{\circ\bullet}(2k-1)\ .\\
\end{split}
\end{equation*}
Using \eqref{eq:recurBW}, these equations read equivalently
 \begin{equation*}
\begin{split}
& F_n^\bullet=\frac{1}{t_\bullet}\left(B \mathbb{Z}_{0,0}^{\bullet\bullet+}(2n)-\sum_{k\geq 1} g_k \sum_{j\geq 0} \mathbb{Z}_{0,2j}^{\bullet\bullet+}(2n)  
\mathbb{Z}_{2j,-1}^{\bullet\circ}(2k-1)\right)\\
& F_n^\circ=\frac{1}{t_\circ}\left(W \mathbb{Z}_{0,0}^{\circ\circ+}(2n)-\sum_{k\geq 1} g_k \sum_{j\geq 0} \mathbb{Z}_{0,2j}^{\circ\circ+}(2n)  
\mathbb{Z}_{2j,-1}^{\circ\bullet}(2k-1)\right)\\
\end{split}
\end{equation*}
where the sum over $j$ now starts at $j=0$. Now $\sum_{j\geq 0} \mathbb{Z}_{0,2j}^{\bullet\bullet+}(2n)  
\mathbb{Z}_{2j,-1}^{\bullet\circ}(2k-1)$ enumerates paths form $0$ to $-1$, of total length $2n+2k-1$, and whose
$2n$ first steps stay above $0$. Their first $0\to -1$ step therefore occurs after some length $2n+2q$ for some $q\geq 0$.
This step receives the weight $B$ and is followed by a path from a white height $-1$ to a white height $-1$ 
of length $2k-2q-2$ (which thus implies $q\leq k-1$).
We may therefore write 
\begin{equation*}
\begin{split}
&\sum_{j\geq 0} \mathbb{Z}_{0,2j}^{\bullet\bullet+}(2n)  
\mathbb{Z}_{2j,-1}^{\bullet\circ}(2k-1)=B \sum_{q=0}^{k-1} \mathbb{Z}_{0,0}^{\bullet\bullet+}(2n+2q)  
\mathbb{Z}_{-1,-1}^{\circ\circ}(2k-2q-2)\\
&\sum_{j\geq 0} \mathbb{Z}_{0,2j}^{\circ\circ+}(2n)  
\mathbb{Z}_{2j,-1}^{\circ\bullet}(2k-1)=W \sum_{q=0}^{k-1} \mathbb{Z}_{0,0}^{\circ\circ+}(2n+2q)  
\mathbb{Z}_{-1,-1}^{\bullet\bullet}(2k-2q-2)\\
\end{split}
\end{equation*}
\begin{figure}
\begin{center}
\includegraphics[width=10cm]{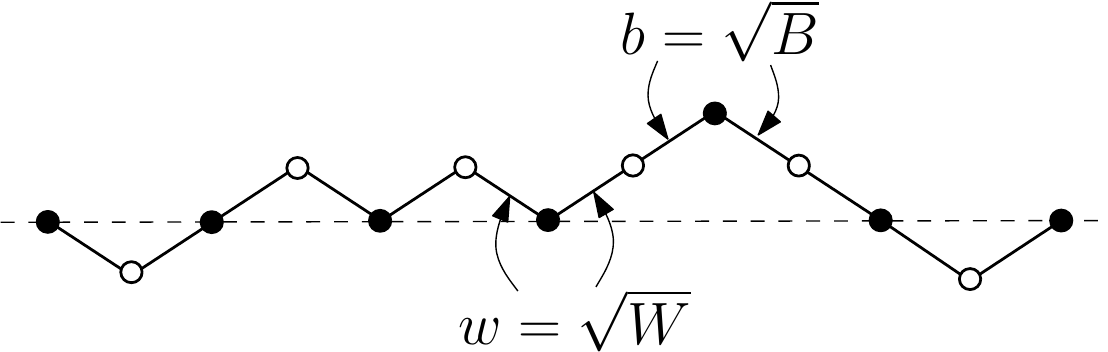}
\end{center}
\caption{Path weights for the generating functions $\hat{\mathbb{Z}}$.  A weight is assigned to both descending 
and ascending steps: $b=\sqrt{B}$ if the lower height is white and $w=\sqrt{W}$ if the lower height is black.}
\label{fig:pathweights}
\end{figure}
So far all path weights have been assigned to descending steps only. From now on we shall use slightly
different conventions for path weights by assigning, see Figure \ref{fig:pathweights}:
\begin{itemize}
\item a weight $b=\sqrt{B}$  to each descending step from a black height to a white height and to each
ascending step from a white height to a black height \footnote{When taking the square-root of a generating 
function that is positive for positive weights, we naturally take the positive determination.};
\item a weight $w=\sqrt{W}$ to each descending step from a white height to a black height and to each
ascending step from a black height to a white height.
\end{itemize}
We shall denote by $\hat{\mathbb{Z}}$ (instead of $\mathbb{Z}$) the corresponding generating functions.
Clearly, when the heights of the two extremities of the path are the same, the ascending and descending steps
of each sort are well balanced so that, for instance $\mathbb{Z}_{0,0}^{\bullet\bullet+}(2n+2q)=\hat{\mathbb{Z}}_{0,0}^{\bullet\bullet+}(2n+2q)$
and $\mathbb{Z}_{-1,-1}^{\circ\circ}(2k-2q-2)=\hat{\mathbb{Z}}_{-1,-1}^{\circ\circ}(2k-2q-2)$. Moreover, by reversing the
paths vertically, we have $\hat{\mathbb{Z}}_{0,-2k}^{\bullet\bullet}(2n)=\hat{\mathbb{Z}}_{-2k,0}^{\bullet\bullet}(2n)$
since the weights are unchanged under reversing. If we now reverse the paths horizontally, we get instead
$\hat{\mathbb{Z}}_{-2k,0}^{\bullet\bullet}(2n)=\hat{\mathbb{Z}}_{0,-2k}^{\circ\circ}(2n)$ since under this reversing, the colors
have to be exchanged for the weights $b$ and $w$ to remain correct.
We may thus introduce the function
\begin{equation}
L_k(2n)\equiv \hat{\mathbb{Z}}_{i,i-2k}^{\bullet\bullet}(2n)=\hat{\mathbb{Z}}_{i,i-2k}^{\circ\circ}(2n)
\label{eq:Lkdef}
\end{equation} 
(note that the last two terms are indeed independent of $i$)
and it satisfies the relation
\begin{equation*}
L_k(2n)=L_{-k}(2n)\ .
\end{equation*}
With these new notations, we end up with
\begin{equation}
\begin{split}
& F_n^\bullet=\sum_{g\geq 0} \alpha_q \hat{\mathbb{Z}}_{0,0}^{\bullet\bullet+}(2n+2q) 
\qquad
\alpha_q=\frac{B}{t_\bullet} \left(\delta_{q,0}- \sum_{k\geq q+1} g_kL_0(2k-2q-2)\right)\\ 
& F_n^\circ=\sum_{g\geq 0} \tilde{\alpha}_q \hat{\mathbb{Z}}_{0,0}^{\circ\circ+}(2n+2q) 
\qquad
\tilde{\alpha}_q=\frac{W}{t_\circ} \left(\delta_{q,0}- \sum_{k\geq q+1} g_kL_0(2k-2q-2)\right)\\
\end{split}
\label{Fnformula}
\end{equation}

To illustrate this formula, let us return to the case of quadrangulations (where $g_2$ is omitted). 
In this case, only $\alpha_0$ and $\alpha_1$ are
non-zero, and have values
\begin{equation*}
\begin{split}
& \alpha_0=\frac{B}{t_\bullet}(1-L_0(2))=\frac{B}{t_\bullet}(1-(B+W))=1+\frac{B W}{t_\bullet}\\
&\alpha_1=\frac{B}{t_\bullet}(-L_0(0))=-\frac{B}{t_\bullet}\\
&\tilde{\alpha}_0=\frac{W}{t_\circ}(1-L_0(2))=\frac{W}{t_\circ}(1-(B+W))=1+\frac{B W}{t_\circ}\\
&\tilde{\alpha}_1=\frac{W}{t_\circ}(-L_0(0))=-\frac{W}{t_\circ}\ .\\
\end{split}
\end{equation*}
Here we have used eqs.~\eqref{eq:recurBWquad} to simplify the first and third lines.
This leads to
\begin{equation*}
\begin{split}
F_1^\bullet=& \alpha_0 \hat{\mathbb{Z}}_{0,0}^{\bullet\bullet +}(2)+\alpha_1 \hat{\mathbb{Z}}_{0,0}^{\bullet\bullet +}(4)\\
&= \alpha_0 W+\alpha_1 (W^2+B W)=W-\frac{1}{t_\bullet} B^2 W \\
F_1^\circ=& \tilde{\alpha}_0 \hat{\mathbb{Z}}_{0,0}^{\circ\circ+}(2)+\tilde{\alpha}_1 \hat{\mathbb{Z}}_{0,0}^{\circ\circ +}(4)\\
&= \tilde{\alpha}_0 B+\tilde{\alpha}_1 (B^2+B W)=B-\frac{1}{t_\circ} W^2 B \\
\end{split}
\end{equation*}
in agreement with eqs.~\eqref{eq:F1quad}.

\section{Computation of the Hankel determinants}
\label{sec:computationHankel}
\subsection{Computation of $h_i^{(1)}$ and $\tilde{h}_i^{(1)}$}
\label{sec:computationHone}
The computation of $h_i^{(1)}$ and $\tilde{h}_i^{(1)}$ turns out to be simple as it takes
exactly the same form as that performed in \cite{BG12}. Indeed, we may use the 
relation 
\begin{equation*}
\hat{\mathbb{Z}}_{0,0}^{\bullet\bullet+}(2m+2n+2+2q)=\sum_{k=1}^{m+1}\sum_{\ell=1}^{n+1}
\hat{\mathbb{Z}}_{0,2k-1}^{\bullet\circ +}(2m+1)A_{2k-1,2\ell-1}^{\circ\circ}(2q)\hat{\mathbb{Z}}_{2\ell-1,0}^{\circ\bullet+}(2n+1)
\end{equation*}
obtained by classifying the paths according to the heights $2k-1$ and $2\ell-1$ after $2m+1$ and $2m+1+2q$ steps.
Here $A_{2k-1,2\ell-1}^{\circ\circ}(2q)$ denotes the generating function of paths of length $2q$ from white height 
$2k-1$ to white height $2\ell-1$ (with the weights $b=\sqrt{B}$ and $w=\sqrt{W}$ assigned to both ascending and descending
steps according to the color of their extremities) which remain above height $0$ (note that height $0$ is black in this case).

This results into the matrix identity
\begin{equation*}
\hspace{-1.3cm}(F_{n+m+1}^\bullet)_{0\leq m,n\leq i}= (\hat{\mathbb{Z}}_{0,2k-1}^{\bullet\circ +}(2m+1))_{0\leq m\leq i \atop 1\leq k\leq i+1}\cdot(\sum_{q\geq 0} \alpha_q A_{2k-1,2\ell-1}^{\circ\circ}(2q))_{1\leq k,\ell\leq i+1}\cdot (\hat{\mathbb{Z}}_{2\ell-1,0}^{\circ\bullet+}(2n+1))_{1\leq \ell\leq i+1\atop 0\leq n\leq i}
\end{equation*}
Taking the determinant of both sides of this identity and noting that the two extremal matrices in the right hand side
are triangular matrices whose determinants are trivially computed, we immediately get to
\begin{equation*}
h_i^{(1)}= W^{i+1} (B W)^{\frac{i(i+1)}{2}} \det_{1\leq k,\ell\leq i+1} (\sum_{q\geq 0} \alpha_q A_{2k-1,2\ell-1}^{\circ\circ}(2q))
\end{equation*} 
and similarly
\begin{equation*}
\tilde{h}_i^{(1)}= B^{i+1} (B W)^{\frac{i(i+1)}{2}} \det_{1\leq k,\ell\leq i+1} (\sum_{q\geq 0} \alpha_q A_{2k-1,2\ell-1}^{\bullet\bullet}(2q))
\end{equation*}
where $A_{2k-1,2\ell-1}^{\bullet\bullet}(2q)$ now denotes the generating function of paths of length $2q$ from black height 
$2k-1$ to black height $2\ell-1$ (with the weights $b=\sqrt{B}$ and $w=\sqrt{W}$ assigned to both ascending and descending
steps according to the color of their extremities) which remain above height $0$ (height $0$ being white in this case).
\begin{figure}
\begin{center}
\includegraphics[width=11cm]{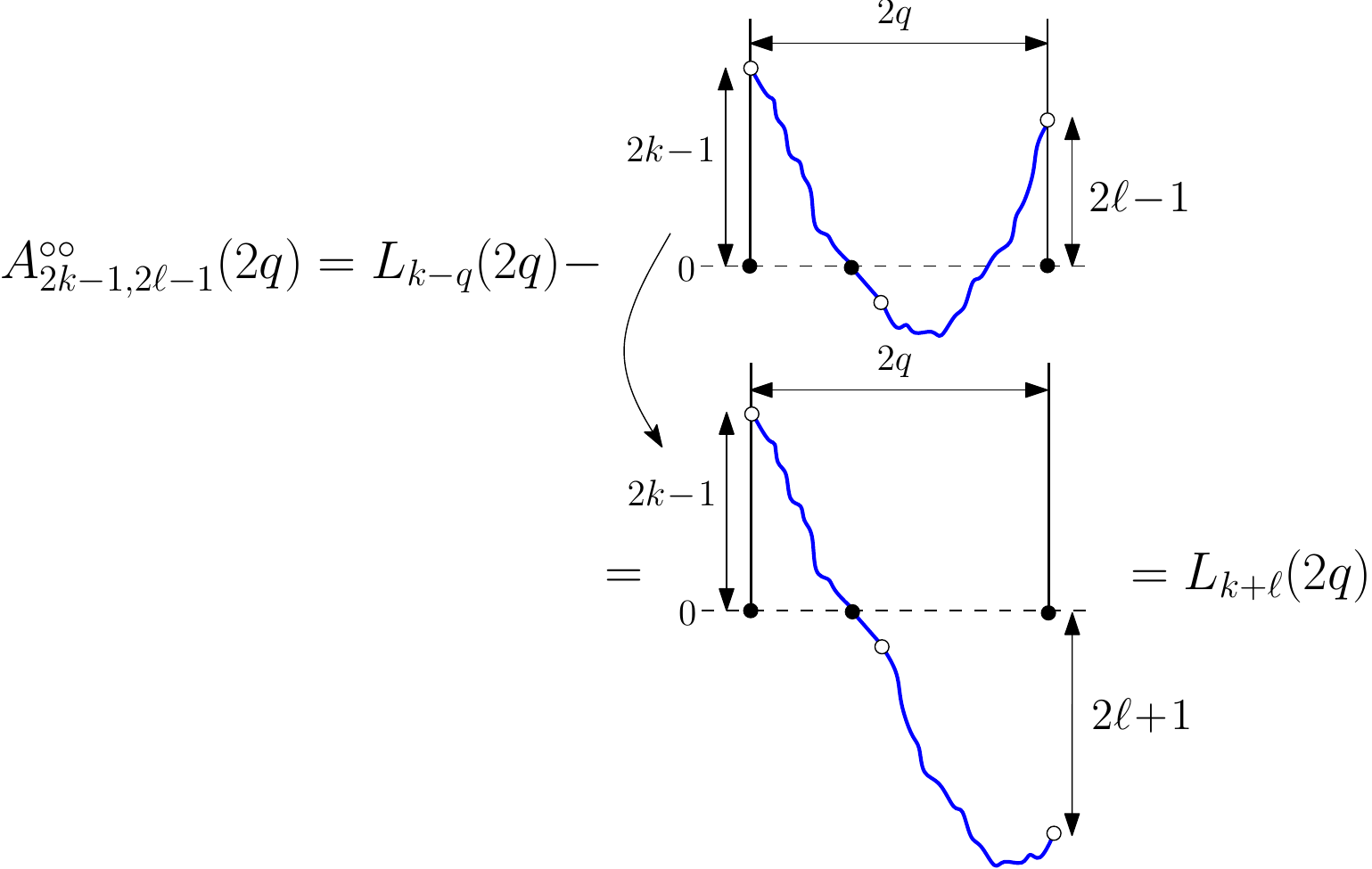}
\end{center}
\caption{By a standard reflection principle, $A_{2k-1,2\ell-1}^{\circ\circ}(2q)$ is obtained by subtracting from $L_{k-\ell}(2q)$
paths which go below $0$, which are in bijection with paths with height decrease $(k+\ell)$, as enumerated by $L_{k+\ell}(2q)$.
Note that the rightmost part of the path has been returned \emph{vertically} in the argument in order for
the weights after reversing to still be correct.}
\label{fig:reflectionone}
\end{figure}

Let us from now on concentrate on $h_i^{(1)}$. The quantity $A_{2k-1,2\ell-1}^{\circ\circ}(2q)$ may be
obtained via a simple (and standard) reflection principle, namely
\begin{equation*}
A_{2k-1,2\ell-1}^{\circ\circ}(2q)=L_{k-\ell}(2q)-L_{k+\ell}(2q)\ .
\end{equation*}
Indeed, paths contributing to  $A_{2k-1,2\ell-1}^{\circ\circ}(2q)$ are identical to those which contribute
to $\hat{\mathbb{Z}}_{2k-1,2\ell-1}^{\circ\circ}(2q)=L_{k-\ell}(2q)$ except that they have to remain
above height $0$. The paths to be subtracted are those paths reaching a negative height. Considering
the first step on the negative side (this $0\to -1$ step receives the weight $b=\sqrt{B}$ since height $0$ is black),
the rest of the path goes from a white height $-1$ to a white height $2\ell-1$. Returning this
path \emph{vertically} (which does not modify the weight prescription), see Figure \ref{fig:reflectionone}, and shifting the heights by $2\ell$,
we get a properly weighted path from height $2k-1$ to height $-1-2\ell$, as enumerated by
$\hat{\mathbb{Z}}_{2k-1,-1-2\ell}^{\circ\circ}(2q)=L_{k+\ell}(2q)$, hence the formula.
We deduce 
\begin{equation}
h_i^{(1)}= W^{i+1} (B W)^{\frac{i(i+1)}{2}} \det_{1\leq k,\ell\leq i+1} (C_{k-\ell}-C_{k+\ell})
\label{eq:dethi}
\end{equation} 
where
\begin{equation*}
C_k=\sum_{q\geq 0} \alpha_q L_k(2q)\ .
\end{equation*}
From now on, we shall assume that the $g_k$'s for $k> p+1$ are all $0$ and that $g_{p+1}\neq 0$,
for some $p\geq 1$. In other words, we enumerate maps with faces of degree at most $2p+2$. The
case of quadrangulations corresponds to $p=1$ (and $g_1=0$) and that of hexangulations to $p=2$
(and $g_1=g_2=0$).
Note in this case that $\alpha_q=0$ for $q>p$, hence, from its definition, $C_k=0$ for $|k|>p$, while
\begin{equation*}
C_p=C_{-p}=-\frac{B}{t_\bullet}g_{p+1} (B W)^{p/2}\ .
\end{equation*}
A simple formula can then be written for the determinant appearing in eq.~\eqref{eq:dethi} in terms of
the solutions $x_a$ of the so-called characteristic equation
\begin{equation}
0=\sum_{k=-p}^p C_k x^k= C_0+\sum_{k=1}^{p} C_k \left(x^k+\frac{1}{x^k}\right)
\label{eq:chareq}
\end{equation}
since $L_k(2q)=L_{-k}(2q)$, hence $C_k=C_{-k}$. This equation has $2p$ solutions which we denote by $x_a$ and $1/x_a$, $a=1,\cdots,p$ 
($x_a$ being chosen, say with modulus less than $1$). The $p$ quantities $x_a$ may be viewed 
as a parametrization of the $p$ quantities $C_0,C_1,\cdots C_{p-1}$ via
\begin{equation}
C_k=(-1)^{p+k} C_p e_{p+k}\left(x_1,x_2,\dots, x_p,\frac{1}{x_1},\frac{1}{x_2},\cdots,\frac{1}{x_p}\right)
\label{eq:Ctoe}
\end{equation}
where the $e_i$'s denote the usual elementary symmetric polynomials.
It is now a standard result of representation theory \cite{FH91} (already used in \cite{BG12}) that
\begin{equation}
\det\limits_{1\leq k,\ell\leq i+1} (C_{k-\ell}-C_{k+\ell})=(-1)^{p(i+1)} C_p^{i+1} \frac{\det\limits_{1\leq a,a'\leq p} (x_a^{i+1+a'}-x_a^{-(i+1+a')})}
{\det\limits_{1\leq a,a'\leq p} (x_a^{a'}-x_a^{-a'})}\ .
\label{eq:weylrep}
\end{equation}
This leads eventually to 
\begin{equation}
h_i^{(1)}= W^{i+1} (B W)^{\frac{i(i+1)}{2}}(-1)^{p(i+1)} C_p^{i+1} \frac{\det\limits_{1\leq a,a'\leq p} (x_a^{i+1+a'}-x_a^{-(i+1+a')})}
{\det\limits_{1\leq a,a'\leq p} (x_a^{a'}-x_a^{-a'})})
\label{eq:dethiexp}
\end{equation}
and, by a similar argument
\begin{equation*}
\tilde{h}_i^{(1)}=B^{i+1} (B W)^{\frac{i(i+1)}{2}}(-1)^{p(i+1)} \tilde{C}_p^{i+1} \frac{\det\limits_{1\leq a,a'\leq p} (x_a^{i+1+a'}-x_a^{-(i+1+a')})}
{\det\limits_{1\leq a,a'\leq p} (x_a^{a'}-x_a^{-a'})})
\end{equation*}
with $\tilde{C}_p=-\frac{W}{t_\circ}g_{p+1} (B W)^{p/2}$. Note that, since the $\alpha_q$'s and the $\tilde{\alpha}_q$'s are proportional,
and because of \eqref{eq:Lkdef}, the characteristic equation is the same in the calculation of $\tilde{h}_i^{(1)}$ as that
for $h_i^{(1)}$, hence the $x_a$'s are the same.

As just mentioned, eq.~\eqref{eq:weylrep} is a standard result of representation theory, whose proof can be found in Appendix A of 
\cite{FH91}. Still, the proof of \cite{FH91} is not so enlightening to the neophyte and it is instructive to recover this result via a more 
heuristic argument. From the characteristic equation, we deduce that the vectors $(x_a^\ell)_{\ell \in \mathbb{Z}}$ and
$(x_a^{-\ell})_{\ell \in \mathbb{Z}}$ (for any $x_a$ solution of the characteristic equation)
are both in the kernel of the infinite matrix $(C_{k-\ell})_{k,\ell\in \mathbb{Z}}$, namely
\begin{equation*}
\sum_{\ell\in \mathbb{Z}}C_{k-\ell}x_a^\ell= x_a^k(\sum_{m \in \mathbb{Z}}C_{m}x_a^{-m})=0\qquad
\sum_{\ell\in \mathbb{Z}}C_{k-\ell}x_a^{-\ell}= x_a^{-k}(\sum_{m \in \mathbb{Z}}C_{m}x_a^{m})=0
\end{equation*} 
for $a=1,\cdots,p$ (recall that $C_m=C_{-m}$). To now find a vector $v_\ell$ in the kernel of the semi-infinite
matrix $(C_{k-\ell}-C_{k+\ell})_{k,\ell\geq 1}$, we note that, for $k\geq 1$
\begin{equation*}
\begin{split}
\sum_{\ell\geq 1}(C_{k-\ell}-C_{k+\ell})v_\ell& =\sum_{\ell\geq 1} C_{k-\ell} v_\ell -\sum_{\ell\leq -1} C_{k-\ell} v_{-\ell}\\
& = \sum_{\ell\in \mathbb{Z}}C_{k-\ell} v_\ell \\
\end{split}
\end{equation*}
provided $v_{-\ell}=-v_\ell$ for all $\ell$ (in particular $v_0=0$). For $a=1,\cdots,p$, the vectors
with components
\begin{equation}
v_\ell^{(a)}=x_a^\ell-x_a^{-\ell} \qquad \ell\geq 1
\label{eq:vla}
\end{equation}
are therefore in the kernel of our semi-infinite matrix. Taking now $k\leq i+1$, the sum over all $\ell\geq 1$ runs in
practice only from $1$ to $i+p+1$ so that we get a vector satisfying
\begin{equation*}
\sum_{\ell=1}^{i+1}(C_{k-\ell}-C_{k+\ell})v_\ell=0
\end{equation*}
by simply imposing $v_{i+2}=v_{i+3}=\cdots=v_{i+p+1}=0$. These extra conditions can be achieved by taking
a linear combination of the $p$ vectors $(v_\ell^{(a)})_{\ell\geq 1}$ and lead to a non-zero vector if the $p$ conditions
are not linearly independent, namely whenever  
\begin{equation*}
\det_{1\leq a,a'\leq p} v_{i+a'+1}^{(a)} =0\ .
\end{equation*}
The determinant in the left hand side of \eqref{eq:weylrep} therefore vanishes whenever the determinant 
$\det_{1\leq a,a'\leq p} v_{i+a'+1}^{(a)}$ vanishes. This latter determinant (which is anti-symmetric in the $x_a$'s instead
of the desired determinant which is symmetric) has however more zeros than desired: it indeed vanishes
whenever $x_a=x_{a'}$ for some $a\neq a'$ (as it implies $v_\ell^{(a)}=v_\ell^{(a')}$) or $x_a=1/x_{a'}$ for any $a,a'$ (as it implies $v_\ell^{(a)}=-v_\ell^{(a')}$),
and in particular (for $a=a'$) when $x_a=\pm 1$ (in which case $v_\ell^{(a)}=0$). These cases correspond
precisely to the zeros of $\det_{1\leq a,a'\leq p} v_{a'}^{(a)}=(-1)^p \prod_{a=1}^p (1-x_a^2)\prod_{1\leq a<a'\leq p}(x_a-x_{a'})(1-x_a x_{a'})
/\prod_{a=1}^p x_a^p$
and we must suppress them by dividing $\det_{1\leq a,a'\leq p} v_{i+a'+1}^{(a)}$ by $\det_{1\leq a,a'\leq p} v_{a'}^{(a)}$.
This eventually explains \eqref{eq:weylrep} by adjusting the proportionality constant so that, say  the $(x_1x_2\cdots x_p)^{i+1}$ term
on both sides be the same. Indeed, in the left hand side, this term comes from the largest possible power of $C_0$, namely
$C_0^{i+1}$, leading to a term $((-1)^{p}C_p)^{i+1}$, while in the
ratio of determinants in the right hand side, it is easily seen to be $1$.

We gave here the argument as we find it more enlightening than the proof in \cite{FH91}. Still, as presented here,
this is just an argument and promoting it into a real proof would need a better control on the
various determinants involved (in particular deal with the possibility of multiple roots, ...). 
 \vskip .5cm

To illustrate our result, let us give the expression for $h_i^{(1)}$ in the case of quadrangulations and 
hexangulations. For quadrangulations, we have $p=1$ and $C_p=-\frac{B}{t_\bullet} (B W)^{1/2}$, so that
\begin{equation}
h_i^{(1)} =W^{i+1} (B W)^{\frac{(i+1)^2}{2}} \left(\frac{B}{t_\bullet}\right)^{i+1} \frac{1}{x^{i+1}}\frac{1}{1-x^2}\,  u_{2i+4}
\  \hbox{where}\ u_i\equiv 1-x^i\ ,
\label{eq:honequad}
\end{equation}
with $B$ and $W$ solutions of \eqref{eq:recurBWquad}, and where $x$ is the solution (with modulus less than one) of  the characteristic equation  (obtained after some
straightforward simplifications)
\begin{equation}
1-2(B+W)-\sqrt{B W} \left(x+\frac{1}{x}\right)=0\ .
\label{eq:charquad}
\end{equation}
As for hexangulation ($p=2$ and $C_p=-\frac{B}{t_\bullet} (B W)$), we get
\begin{equation}
\begin{split}
h_i^{(1)}& =W^{i+1} (B W)^{\frac{(i+1)(i+2)}{2}} \left(-\frac{B}{t_\bullet}\right)^{i+1} \frac{1}{(x_1 x_2)^{i+1}}
\frac{1}{1-x_1^2}\frac{1}{1-x_2^2}\frac{1}{1-x_1x_2}\, u_{2i+4}\\
&\hbox{where}\ u_i\equiv 1-\frac{1-x_1x_2}{x_1-x_2} x_1^{i+1}-\frac{1-x_1x_2}{x_2-x_1} x_2^{i+1}- (x_1 x_2)^{i+1}\\
\end{split}
\label{eq:honehex}
\end{equation}
with $B$ and $W$ solutions of \eqref{eq:recurBWhex}, and where $x_1$ and $x_2$ are the solutions (with modulus less than one) of
\begin{equation}
1-3(B^2+W^2)-10 B W -3\sqrt{B W}(B+W)  \left(x+\frac{1}{x}\right)-B W \left(x^2+\frac{1}{x^2}\right)=0\ .
\label{eq:charhex}
\end{equation}

\subsection{Computation of $h_i^{(0)}$ and $\tilde{h}_i^{(0)}$}
\label{sec:computationHzero}
By an argument similar to the previous subsection, we immediately get
\begin{equation}
\begin{split}
&h_i^{(0)}=  (B W)^{\frac{i(i+1)}{2}} \det_{0\leq k,\ell\leq i} (\sum_{q\geq 0} \alpha_q A_{2k,2\ell}^{\bullet\bullet}(2q))\\
&\tilde{h}_i^{(0)}=  (B W)^{\frac{i(i+1)}{2}} \det_{0\leq k,\ell\leq i} (\sum_{q\geq 0} \alpha_q A_{2k,2\ell}^{\circ\circ}(2q))\\
\end{split}
\label{eq:hizero}
\end{equation}
where $A_{2k,2\ell}^{\bullet\bullet}(2q)$ denotes the generating function of paths of length $2q$ from black height 
$2k$ to black height $2\ell$ which remain above height $0$ (note that height $0$ is black in this case)
and $A_{2k,2\ell}^{\circ\circ}(2q)$ denotes the generating function of paths of length $2q$ from white height 
$2k$ to white height $2\ell$ which remain above height $0$ (note that height $0$ is white in this case).
The difficulty is now that, because of the (even) parity of the heights of the extremities of the path, we can no longer use a reflection 
principle as simple as that of the previous section. Nevertheless, we have the following formula, for $k,\ell\geq 0$:
\begin{equation}
A_{2k,2\ell}^{\bullet\bullet}(2q)=L_{k-\ell}(2q)-c\, L_{k+\ell+1}(2q)+(c^2-1) \sum_{m\geq 2} L_{k+\ell+m}(2q) (-c)^{m-2}
\label{eq:Atwoktwol}
\end{equation}
where 
\begin{equation*}
c \equiv \frac{b}{w}=\sqrt{\frac{B}{W}}
\end{equation*}
and a similar expression for $A_{2k,2\ell}^{\circ\circ}(2q)$ with $b$ and $w$ exchanged, i.e., with $c\to 1/c$.
This in turn implies
\begin{equation}
\sum_{q\geq 0} \alpha_q A_{2k,2\ell}^{\bullet\bullet}(2q)=C_{k-\ell}-c\, C_{k+\ell+1}+(c^2-1) \sum_{m\geq 2} C_{k+\ell+m} (-c)^{m-2}\ .
\label{eq:Csum}
\end{equation}
Note that the sum in the right hand side is in practice finite.
\begin{figure}
\begin{center}
\includegraphics[width=13cm]{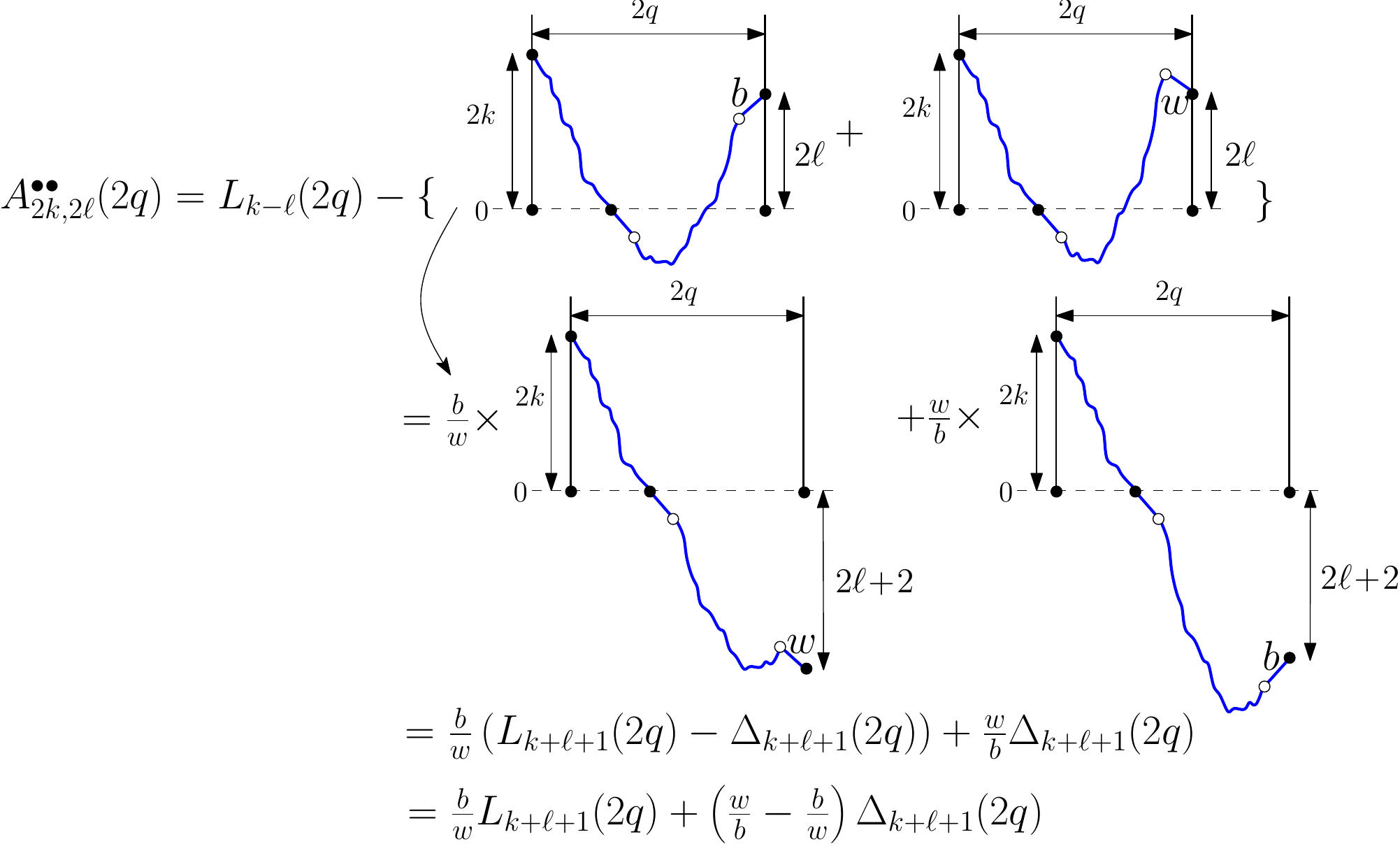}
\end{center}
\caption{We get $A_{2k,2\ell}^{\bullet\bullet}(2q)$ by subtracting from $L_{k-\ell}(2q)$
paths which go below $0$, which are decomposed in two sets: those ending with an up step (weighted $b$)
and those ending with a down step (weighted $w$). By returning  \emph{vertically} the rightmost part of the paths \emph{but
the last step} which we return \emph{horizontally}, we get paths with height decrease $2(k+\ell+1)$. 
To get the correct weights, we must apply a multiplicative factor $b/w$ to the first set (enumerated by 
$L_{k+\ell+1}(2q)-\Delta_{k+\ell+1}(2q)$) and a multiplicative factor $w/b$ to the second set (enumerated by 
$\Delta_{k+\ell+1}(2q)$), hence the formula.}
\label{fig:reflectiontwo}
\end{figure}
\begin{figure}
\begin{center}
\includegraphics[width=12cm]{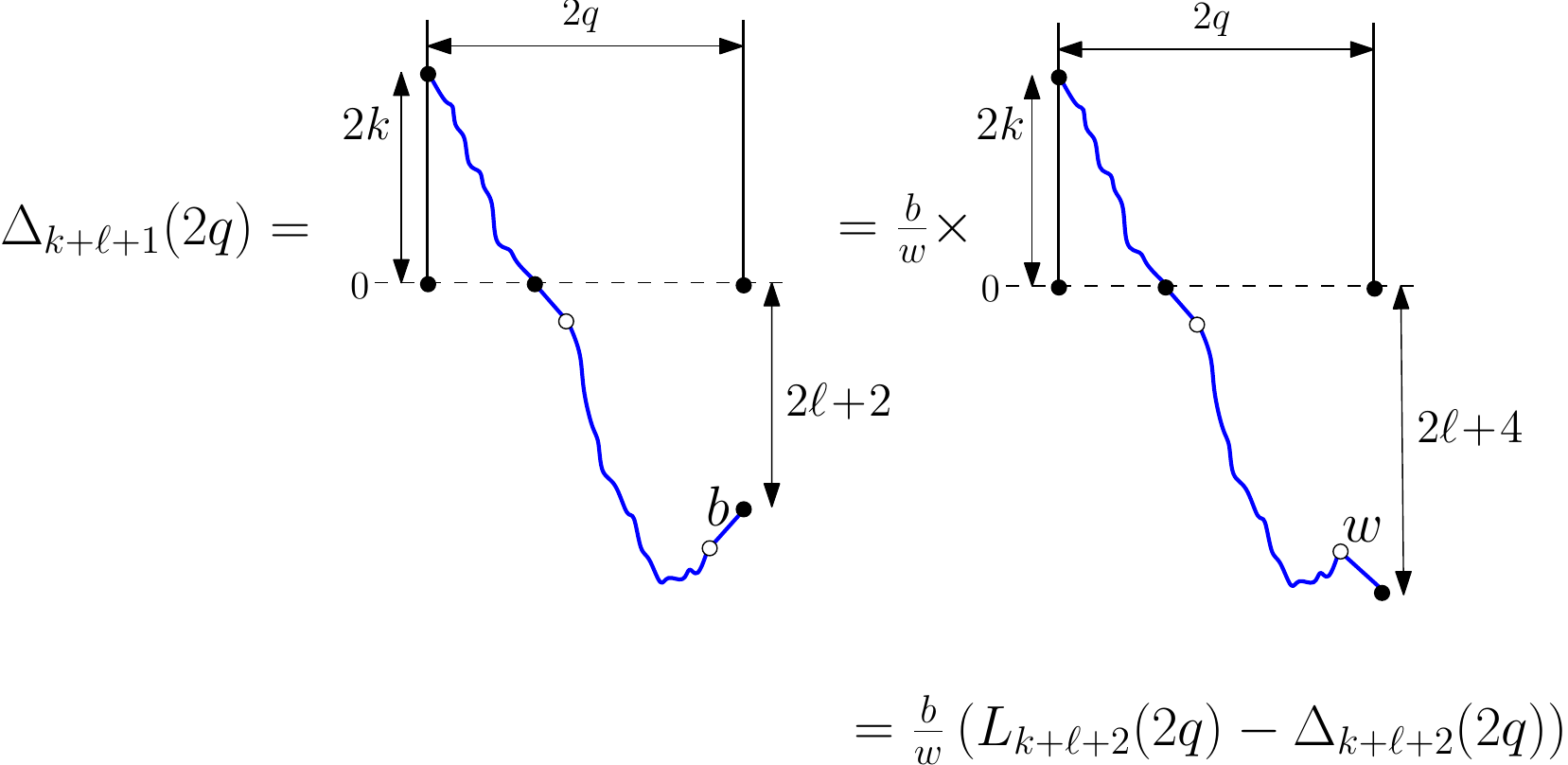}
\end{center}
\caption{By simply returning the last step horizontally, we see that the generating function $\Delta_{k+\ell+1}(2q)$ for paths
of length $2q$, height decrease $2(k+\ell+1)$ and ending with an up step is $(b/w)$ times the 
generating function $L_{k+\ell+2}(2q)-\Delta_{k+\ell+2}(2q)$ for paths
of length $2q$, height decrease $2(k+\ell+2)$ and ending with a down step. By repeating this argument recursively, we
find that $\Delta_{k+\ell+1}(2q)=-\sum_{m\geq 2} \left(-\frac{b}{w}\right)^{m-1}L_{k+\ell+m}(2q)$.}
\label{fig:reflectionthree}
\end{figure}

Let us now explain the formula \eqref{eq:Atwoktwol}. We may assume $q>0$ since, for $q=0$, the formula is obvious (only
the first term in the right hand side contributes). To get $A_{2k,2\ell}^{\bullet\bullet}(2q)$, we wish as before to subtract from $L_{k-\ell}(2q)$
the generating function of those paths which go below height $0$. To apply again some reflection principle, we look at the
first passage below $0$, which is a step from a black height $0$ to a white height $-1$ (hence receives a weight $b$).
The rest of the path is formed of an ``intermediate" part going from white height $-1$ to the last encountered white height 
(equal to $2\ell+1$ or $2\ell-1$) and of a final step reaching black height $2\ell$, see Figure \ref{fig:reflectiontwo}.  If we now return 
\emph{vertically} the intermediate
part and \emph{horizontally} the last step, we get a total path of length $2q$ going from height $2k$ to height $-2-2\ell$, hence 
with height decrease $2(k+\ell+1)$. The weights of all steps after reversing are correct (recall that a vertical reversing conserves the weights)
except for that of the last step which is $b$ (before reversing) instead of $w$ (as required after reversing) if the last step
after reversing is a descending step from white height $-1-2\ell$ to black height $-2-2\ell$ or is $w$ (before reversing) instead of $b$ (after reversing) 
if the last step after reversing is an ascending step from white height $-3-2\ell$ to black height $-2-2\ell$, see Figure \ref{fig:reflectionthree}.
 Both kinds of paths are enumerated
by $L_{k+\ell+1}(2q)$ but for the first kind, we must apply a multiplicative correction $b/w$ while for the second kind,
we must apply a multiplicative correction $w/b$. The quantity $L_{k-\ell}(2q)-(b/w) L_{k+\ell+1}(2q)$ therefore performs the correct subtraction
of paths of the first kind but paths of the second kind must be re-added with a multiplicative factor $(b/w-w/b)$ to
obtain a correct result. If we denote by $\Delta_{k+\ell+1}(2q)$ the generating function of those paths, the correct 
formula is therefore $A_{2k,2\ell}^{\bullet\bullet}(2q)=L_{k-\ell}(2q)-(b/w)\, L_{k+\ell+1}(2q)+(b/w-w/b) \Delta_{k+\ell+1}(2q)$.
Now, by returning the last (ascending) step in a path enumerated by $\Delta_{k+\ell+1}(2q)$, we get a path whose last step
is now descending from white height $-3-2\ell$ to black height $-4-2\ell$, hence a path with height decrease $2(k+\ell+2)$ 
and, in our terminology, being of the first kind. This immediately leads to the
relation $\Delta_{k+\ell+1}(2q)=\frac{b}{w}(L_{k+\ell+2}(2q)-\Delta_{k+\ell+2}(2q))$ and, by repeating the argument recursively to
$\Delta_{k+\ell+1}(2q)=-\sum_{m\geq 2} \left(-\frac{b}{w}\right)^{m-1}L_{k+\ell+m}(2q)$. Setting $c=b/w$ yields eventually the
desired formula \eqref{eq:Atwoktwol}. To conclude, let us mention that we have a formula for $A_{2k,2\ell}^{\circ\circ}(2q)$
similar to \eqref{eq:Atwoktwol} with $c$ changed into $1/c$.

With the above formula \eqref{eq:Csum}, the computation of the determinants in  eqs.~\eqref{eq:hizero} is much more involved and
we detail it in Appendix B. Still the result is remarkably simple as we get eventually
\begin{equation}
\begin{split}
h_i^{(0)}&=(B W)^{\frac{i(i+1)}{2}}(-1)^{p(i+1)} C_p^{i+1} \prod_{a=1}^p (1+ c\, x_a) \frac{\det\limits_{1\leq a,a'\leq p} (\gamma_a x_a^{i+a'}-x_a^{-(i+1+a')})}
{\det\limits_{1\leq a,a'\leq p} (x_a^{a'}-x_a^{-a'})})\\
&\qquad \hbox{where}\ \gamma_a=\frac{c+x_a}{1+c x_a}\\
\end{split}
\label{eq:dethizeroexp}
\end{equation}
while
\begin{equation}
\tilde{h}_i^{(0)}=(B W)^{\frac{i(i+1)}{2}}(-1)^{p(i+1)} \tilde{C}_p^{i+1} \prod_{a=1}^p (1+ x_a/c) \frac{\det\limits_{1\leq a,a'\leq p} (x_a^{i+a'}/\gamma_a-x_a^{-(i+1+a')})}
{\det\limits_{1\leq a,a'\leq p} (x_a^{a'}-x_a^{-a'})})
\end{equation}
(note the change $c\to 1/c$, which in turn  implies the change $\gamma_a\to 1/\gamma_a$). Note also that both $h_i^{(0)}$ and
$\tilde{h}_i^{(0)}$ are actually invariant under $x_a\leftrightarrow 1/x_a$ for any $a$ since $(1+c\, x_a) (\gamma_a x_a^{i+a'}-x_a^{-(i+1+a')})=
(x_a^{i+a'+1}-x_a^{-(i+a'+1)})+c (x_a^{i+a'}-x_a^{-(i+a')})$.

Again, besides the actual proof of Appendix B, we can give a more heuristic argument along the same lines as before.
Writing
\begin{equation*}
\begin{split}
\hspace{-1.3cm}\sum_{\ell\geq 0} \left(\sum_{q\geq 0} A_{2k,2\ell}^{\bullet\bullet}(2q)\right) w_{\ell}&=\sum_{\ell\geq 0}\big(C_{k-\ell}-c\, C_{k+\ell+1}+(c^2-1) \sum_{m\geq 2} C_{k+\ell+m} (-c)^{m-2}\big)w_\ell\\
&=\sum_{\ell\geq 0}C_{k-\ell}w_\ell-c\sum_{\ell\leq -1} C_{k-\ell}w_{-\ell-1}\\
& \hspace{3.cm} +(c^2-1) \sum_{m\geq 2} (-c)^{m-2} \sum_{\ell\leq -m}C_{k-\ell} w_{-\ell-m}\\
\end{split}
\end{equation*}
we see that, for $\ell\leq -1$, the net coefficient in front of $C_{k-\ell}$ is
\begin{equation*}
-c\, w_{-\ell-1}+(c^2-1) \sum_{m=2}^{-\ell} (-c)^{m-2} w_{-\ell-m}
\end{equation*}
while we would have liked it to be $w_\ell$ so as to reproduce $\sum_{\ell\in \mathbb{Z}} C_{k-\ell}w_\ell$ which is known to give $0$ for 
$w_\ell=x_a^\ell$ or $w_\ell=x_a^{-\ell}$. To construct a vector in the kernel of $(\sum_{q\geq 0} A_{2k,2\ell}^{\bullet\bullet}(2q))_{k,\ell\geq 0}$,  
we thus may as before take a linear combination of $x_a^\ell$ or $x_a^{-\ell}$, now satisfying for all $\ell\leq -1$ the condition
\begin{equation*}
w_{\ell}=-c\, w_{\ell-1}+(c^2-1) \sum_{m=2}^{-\ell} (-c)^{m-2} w_{-\ell-m}
\end{equation*}
(the sum being empty if $\ell=-1$). Writing $(c^2-1)\sum_{m=2}^{-\ell} (-c)^{m-2} w_{\ell-m}=(c^2-1) w_{-\ell-2} -c (c^2-1)\sum_{m=2}^{-\ell-1} (-c)^{m-2} w_{-\ell-1-m}$
and using the above condition for $\ell+1$, we obtain that this condition is equivalent recursively (over $|\ell|=-\ell$) to
$w_{\ell}=-c\, w_{\ell-1}+(c^2-1) w_{-\ell-2} -c\big(w_{\ell+1}+c\, w_{-\ell-2})\big)$, namely
\begin{equation*}
(w_{\ell}+w_{-\ell-2}) +c(w_{\ell+1}+w_{-\ell-1})=0
\end{equation*}
for all $\ell\leq -1$ (i.e., for all $\ell$ since it is symmetric under $\ell\to -\ell-2$). This leads immediately to the linear combination
\begin{equation*}
w_\ell^{(a)}=\frac{c+x_a}{1+c\, x_a}x_a^\ell-x_a^{-\ell-1} \qquad \ell\geq 0
\end{equation*}
for $a=1,\cdots,p$, which satisfy
\begin{equation*}
\sum_{\ell\geq 0} \left(\sum_{q\geq 0} A_{2k,2\ell}^{\bullet\bullet}(2q)\right) w_\ell^{(a)}=0
\end{equation*}
for all $k\geq 0$. Restricting us now to $k\leq i$, the sum over all $\ell\geq 0$ runs in
practice only from $0$ to $i+p$ so that we get a vector satisfying
\begin{equation*}
\sum_{\ell=0}^{i} \left(\sum_{q\geq 0} A_{2k,2\ell}^{\bullet\bullet}(2q)\right)w_\ell=0
\end{equation*}
by simply imposing $w_{i+1}=w_{i+2}=\cdots=w_{i+p}=0$. As before, these extra conditions are achieved by taking
a linear combination of the $p$ vectors $(w_\ell^{(a)})_{\ell\geq 0}$ and a non-zero vector is found if the $p$ conditions
are not linearly independent, namely if  
\begin{equation*}
\det_{1\leq a,a'\leq p} w_{i+a'}^{(a)} =0\ .
\end{equation*}
The determinant in the right hand side of the first line in \eqref{eq:hizero} therefore vanishes whenever the determinant 
$\det_{1\leq a,a'\leq p} w_{i+a'}^{(a)}$ vanishes. As before, this latter determinant (which is anti-symmetric in the $x_a$'s instead
of the desired determinant which is symmetric) has however more zeros than desired as it vanishes
again whenever $x_a=x_{a'}$ for some $a\neq a'$ (as it implies $w_\ell^{(a)}=w_\ell^{(a')}$) or $x_a=1/x_{a'}$ for any $a,a'$ (as it implies 
$w_\ell^{(a)}=-x_{a'} \frac{1+c x_{a'}}{c+x_{a'}} w_\ell^{(a')}$),
and in particular (for $a=a'$) when $x_a=\pm 1$ (in which case $w_\ell^{(a)}=0$).
Again we must suppress these spurious zeros by dividing $\det_{1\leq a,a'\leq p} w_{i+a'}^{(a)}$ by \emph{the same determinant as 
before}, namely $\det_{1\leq a,a'\leq p} v_{a'}^{(a)}$
with $v_\ell^{(a)}$ as in \eqref{eq:vla}.
This eventually explains \eqref{eq:dethizeroexp} by adjusting the proportionality constant so that  the $(x_1x_2\cdots x_p)^{i+1}$ term
on both sides be the same. Indeed, up to the trivial factor $(BW)^{i+1}$, this term in $h_i^{(0)}$ comes as before from the 
$C_0^{i+1}$ term in the determinant in \eqref{eq:hizero},  hence equals $((-1)^{p}C_p)^{i+1}$, while in the ratio of determinants 
in the right hand side, it is easily seen to be $1$ after factoring out a term $1/\prod_{a=1}^{p}(1+c\, x_a)$ (coming from
the denominators of the $\gamma_a$'s).

To conclude this section, let us give the expression for $h_i^{(0)}$ in the case of quadrangulations and 
hexangulations. For quadrangulations, we get
\begin{equation}
h_i^{(0)} = (B W)^{\frac{(i+1)^2}{2}} \left(\frac{B}{t_\bullet}\right)^{i+1} \frac{1}{x^{i+1}}\frac{1+c\, x}{1-x^2}\,  \bar{u}_{2i+3}
\  \hbox{where}\ \bar{u}_i\equiv 1-\frac{c+x}{1+c\, x} x^i\ ,
\label{eq:hzeroquad}
\end{equation}
with $B$ and $W$ solutions of \eqref{eq:recurBWquad}, $c=\sqrt{B/W}$ and $x$ is solution of  the associated characteristic equation
\eqref{eq:charquad}.
For hexangulations , we get
\begin{equation}
\begin{split}
\hspace{-1.3cm} h_i^{(0)}& =(B W)^{\frac{(i+1)(i+2)}{2}} \left(-\frac{B}{t_\bullet}\right)^{i+1} \frac{1}{(x_1 x_2)^{i+1}}
\frac{1+c\, x_1}{1-x_1^2}\frac{1+c\, x_2}{1-x_2^2}\frac{1}{1-x_1x_2}\, \bar{u}_{2i+3}\\
\hspace{-1.3cm} \hbox{where}\ \bar{u}_i&\equiv 1-\frac{1-x_1x_2}{x_1-x_2} \frac{c+x_1}{1+c\, x_1}x_1^{i+1}-\frac{1-x_1x_2}{x_2-x_1} \frac{c+x_2}{1+c\, x_2}x_2^{i+1}- \frac{c+x_1}{1+c\, x_1}\frac{c+x_2}{1+c\, x_2}(x_1 x_2)^{i+1}\\
\end{split}
\label{eq:hzerohex}
\end{equation}
with $B$ and $W$ solutions of \eqref{eq:recurBWhex}, $c=\sqrt{B/W}$ and  $x_1$ and $x_2$ solutions of
the associated characteristic equation \eqref{eq:charhex}. Similar expressions for $\tilde{h}_i^{(0)}$ are obtained by changing $B$ into $W$, 
$t_\bullet$ into $t_\circ$ and $c$ into $1/c$.

\section{Final result}
\label{sec:finalresult}

We may now plug our expression \eqref{eq:dethizeroexp} for $h_i^{(0)}$ and \eqref{eq:dethiexp} for $h_i^{(1)}$ in 
the general formula \eqref{eq:BWtoh} to get our main results
\begin{equation*}
\begin{split}
&B_{2i}=B \frac{\det\limits_{1\leq a,a' \leq p}\left(x_a^{i+a'-1}-x_a^{-(i+a'-1)}\right) \det\limits_{1\leq a,a' \leq p}\left(\gamma_a x_a^{i+a'}-x_a^{-(i+a'+1)}\right)}
{\det\limits_{1\leq a,a' \leq p}\left(\gamma_ax_a^{i+a'-1}-x_a^{-(i+a')}\right) \det\limits_{1\leq a,a' \leq p}\left(x_a^{i+a'}-x_a^{-(i+a')}\right)} \\
&W_{2i+1}=W \frac{\det\limits_{1\leq a,a' \leq p}\left(\gamma_ax_a^{i+a'-1}-x_a^{-(i+a')}\right) \det\limits_{1\leq a,a' \leq p}\left(x_a^{i+a'+1}-x_a^{-(i+a'+1)}\right)}
{\det\limits_{1\leq a,a' \leq p}\left(x_a^{i+a'}-x_a^{-(i+a')}\right) \det\limits_{1\leq a,a' \leq p}\left(\gamma_ax_a^{i+a'}-x_a^{-(i+a'+1)}\right)} \\
& \hbox{where}\ \gamma_a=\frac{c+x_a}{1+c\, x_a}\ .\\
\end{split}
\end{equation*}
Note in particular that $B_0=0$, as wanted. The other parity is obtained by symmetry, and reads
\begin{equation*}
\begin{split}
&B_{2i+1}=B \frac{\det\limits_{1\leq a,a' \leq p}\left(x_a^{i+a'-1}/\gamma_a-x_a^{-(i+a')}\right) \det\limits_{1\leq a,a' \leq p}\left(x_a^{i+a'+1}-x_a^{-(i+a'+1)}\right)}
{\det\limits_{1\leq a,a' \leq p}\left(x_a^{i+a'}-x_a^{-(i+a')}\right) \det\limits_{1\leq a,a' \leq p}\left(x_a^{i+a'}/\gamma_a-x_a^{-(i+a'+1)}\right)} \\
&W_{2i}=W \frac{\det\limits_{1\leq a,a' \leq p}\left(x_a^{i+a'-1}-x_a^{-(i+a'-1)}\right) \det\limits_{1\leq a,a' \leq p}\left(x_a^{i+a'}/\gamma_a-x_a^{-(i+a'+1)}\right)}
{\det\limits_{1\leq a,a' \leq p}\left(x_a^{i+a'-1}/\gamma_a-x_a^{-(i+a')}\right) \det\limits_{1\leq a,a' \leq p}\left(x_a^{i+a'}-x_a^{-(i+a')}\right)} \\
& \hbox{where}\ \gamma_a=\frac{c+x_a}{1+c\, x_a}\ .\\
\end{split}
\end{equation*}
Note in particular that $W_0=0$. (Note also that, when forgetting the vertex colors, i.e., setting $\tb=\tw=t$, one has $B_i=W_i$ and 
$B=W$, so that $c=1$ and $\gamma_a=1$
for all $1\leq a\leq p$; it is then easy to see that the above expressions of 
$B_i$ and $W_i$ specialize to the determinant expressions in~\cite{BG12} for uncolored bipartite maps.)    

The distance-dependent two-point generating functions $G_{i}^\bullet$ and $G_{i}^\circ$ are then obtained from
eqs.~\eqref{eq:teopointbullet} and \eqref{eq:teopointcirc}, namely
\begin{equation*}
\begin{split}
& G_{2i+2}^\bullet=t_\bullet (B_{2i+2}-B_{2i+1})\ ,\quad G_{2i+1}^\bullet=t_\circ (B_{2i+1}-B_{2i}-t_\bullet \delta_{i,0})\\
& G_{2i+2}^\circ=t_\circ(W_{2i+2}-W_{2i+1})\ , \quad W_{2i+1}^\circ=t_\bullet (W_{2i+1}-W_{2i}-t_\circ \delta_{i,0})\\
\end{split}
\end{equation*}
for $i\geq 0$.

For both quadrangulations and hexangulations, we get
\begin{equation}
\begin{split}
& B_{2i}=B\frac{u_{2i}\bar{u}_{2i+3}}{\bar{u}_{2i+1}u_{2i+2}}\qquad W_{2i+1}=W\frac{\bar{u}_{2i+1}u_{2i+4}}{u_{2i+2}\bar{u}_{2i+3}}\\
& B_{2i+1}=B\frac{\hat{u}_{2i+1}u_{2i+4}}{u_{2i+2}\hat{u}_{2i+3}}\qquad W_{2i}=W\frac{u_{2i}\hat{u}_{2i+3}}{\hat{u}_{2i+1}u_{2i+2}}\\
\end{split}
\label{eq:solquadexpl}
\end{equation}
with $u_i$ and $\bar{u}_i$ as in \eqref{eq:honequad} and \eqref{eq:hzeroquad} for quadrangulations and as 
in \eqref{eq:honehex} and \eqref{eq:hzerohex} for hexangulations, while $\hat{u}_i$ is obtained from 
$\bar{u}_i$ by simply changing $c$ into $1/c$.

It is interesting to expand our various generating functions into powers of $t_\bullet$ and $t_\circ$ so as to get \emph{numbers} of maps
instead of generating functions. For quadrangulations, this is best done upon introducing the two quantities $d\equiv c\, x$
and $y\equiv x^2$. From the characteristic equation \eqref{eq:charquad}, $d$ is solution of
\begin{equation*}
W d^2+ (2(B + W)-1)d  + B =0
\end{equation*}
which yields its power expansion from those of $B$ and $W$, namely
\begin{equation*}
d=\tb+(3\tb^2+4\tb\tw)+(10\tb^3+33\tb^2\tw+16\tb\tw^2)+(35\tb^4+202\tb^3\tw+243\tb^2\tw^2+64\tb\tw^3)+\cdots
\end{equation*}
while that of $y$ follows via the relation $y=d^2 W/B$, namely
\begin{equation*}
 y=\tb\tw+(7\tb^2\tw+7\tb\tw^2)+(38\tb^3\tw+91\tb^2\tw^2+38\tb\tw^3)+\cdots
\end{equation*}
Now we have the expressions
\begin{equation*}
\begin{split}
&B_{2i}=B\frac{(1-y^{i})(1-\beta y^{i+1})}{(1-y^{i+1})(1-\beta y^{i})},\ \ \
W_{2i}=W\frac{(1-y^{i})(1-\beta^{-1} y^{i+2})}{(1-y^{i+1})(1-\beta^{-1} y^{i+1})},\\
&B_{2i+1}=B\frac{(1-y^{i+2})(1-\beta^{-1}  y^{i+1})}{(1-y^{i+1})(1-\beta^{-1}  y^{i+2})},\ \ \
W_{2i+1}=W\frac{(1-y^{i+2})(1-\beta y^{i})}{(1-y^{i+1})(1-\beta y^{i+1})},\\
&\hspace{2.cm} \hbox{where}\ \beta=\frac{d+y}{1+d} \\
\end{split}
\end{equation*}
which are well suited for series expansions. For instance, we get
\begin{equation*}
\begin{split}
&\hspace{-1.3cm}B_1 =\tb+\tb(\tb+\tw)+\tb(2\tb^2+5\tb\tw+2\tw^2)+\tb(5\tb^3+22\tb^2\tw+22\tb\tw^2+5\tw^3)+\cdots \\
&\hspace{-1.3cm}B_2=\tb+\tb(\tb+2\tw)+\tb(2\tb^2+9\tb\tw+6\tw^2)+\tb(5\tb^3+37\tb^2\tw+57\tb\tw^2+20\tw^3)+\cdots \\
&\hspace{-1.3cm}B_3=\tb+\tb(\tb+2\tw)+\tb(2\tb^2+10\tb\tw+6\tw^2)+\tb(5\tb^3+44\tb^2\tw+65\tb\tw^2+20\tw^3)+\cdots \\
\end{split}
\end{equation*}
so that
\begin{equation*}
\begin{split}
&\hspace{-1.3cm}G_1^\bullet=\tb\tw(\tb+\tw)+\tb\tw(2\tb^2+5\tb\tw+2\tw^2)+\tb\tw(5\tb^3+22\tb^2\tw+22\tb\tw^2+5\tw^3)+\cdots \\
&\hspace{-1.3cm}G_2^\bullet=t_\bullet^2 t_\circ + 4 t_\bullet^2 t_\circ(t_\bullet+t_\circ)+5t_\bullet^2 t_\circ(3 t_\bullet^2+7 t_\bullet t_\circ+3 t_\circ^2)+\cdots \\
&\hspace{-1.3cm}G_3^\bullet=\tb^2\tw^2+\tb^2\tw^2(7\tb+8\tw)+\cdots \\ 
\end{split}
\end{equation*}

As for hexangulations, we may proceed in a slightly more involved (although quite similar) way by setting 
$z\equiv c(x+1/x)$ which, from eq.~\eqref{eq:charhex} is solution of
\begin{equation*}
W^2 z^2 + 3 W(B+W) z +8 B W +3 (B^2+W^2)-1=0\ .
\end{equation*}
This leads to two solutions $z_1$ and $z_2$
\begin{equation*}
\hspace{-1.3cm}z_1=\frac{-3B\!-\!3W\!-\!\sqrt{4\!-\!3B^2\!-\!14W B\!-\!3W^2}}{2 W}\ , \qquad z_2=\frac{-3B\!-\!3W\!+\!\sqrt{4\!-\!3B^2\!-\!14W B\!-\!3 W^2}}{2 W}
\end{equation*}
which implicitly define the two values $x_1$ and $x_2$ to be incorporated in eqs.~\eqref{eq:honehex} and \eqref{eq:hzerohex}.
As for quadrangulations, we then define $d_1\equiv c x_1$ and $d_2\equiv c x_2$, solutions of $d_i^2-d_i z_i+B/W=0$ ($i=1,2$) as well as 
$y_1\equiv x_1^2=d_1^2 W/B$ and $y_2\equiv x_2^2=d_2^2 W/B$. Picking the correct determination for $d_1$ and $d_2$, we get
their power series expansions from those of $B$ and $W$, namely
\begin{equation*}
\begin{split}
&\hspace{-1.3cm}d_1=-\tb\!+\!\frac{3}{2}\tb(\tb\!+\!\tw)\!-\!\frac{1}{8} \tb\left(29
   \tb^2\!+\!106\tw\tb\!+\!45\tw^2\right)\!+\!\frac{3}{2}\tb \left(5
   \tb^3\!+\!30\tw\tb^2\!+\!32\tw^2\tb\!+\!7\tw^3\right)\!+\!\cdots \\
&\hspace{-1.3cm}d_2=\tb\!+\!\frac{3}{2}\tb(\tb\!+\!\tw)\!+\!\frac{1}{8}\tb\left(29\tb^2\!+\!106
   \tw \tb\!+\!45 \tw^2\right)\!+\!\frac{3}{2} \tb \left(5 \tb^3\!+\!30 \tw
   \tb^2\!+\!32 \tw^2 \tb\!+\!7 \tw^3\right)\!+\!\cdots \\
&\hspace{-1.3cm}y_1=\tb\tw\!-\!3\tb\tw(\tb\!+\!\tw)\!+\!\frac{1}{2}\tb\tw
   \left(23\tb^2\!+\!62\tw\tb\!+\!23\tw^2\right)\!+\!\cdots\\
&\hspace{-1.3cm}y_2=\tb\tw\!+\!3\tb\tw(\tb\!+\!\tw)\!+\!\frac{1}{2}\tb\tw
   \left(23\tb^2\!+\!62\tw\tb\!+\!23\tw^2\right)\!+\!\cdots\\
\end{split}
\end{equation*} 
Now we have the expressions
\begin{equation*}
\begin{split}
&\hspace{-1.3cm}B_{2i}=B\frac{(1\!-\!\lambda_1 y_1^i\!-\!\lambda_2 y_2^i
\!-\!\frac{W}{B}d_1d_2(y_1y_2)^i)}{(1\!-\!\lambda_1 y_1^{i+1}\!-\!\lambda_2 y_2^{i+1}
\!-\!\frac{W}{B}d_1d_2(y_1y_2)^{i+1})}\times \\
&\hspace{2.cm}\times\frac{(1\!-\!\lambda_1\beta_1 y_1^{i+1}\!-\!\lambda_2\beta_2 y_2^{i+1}
\!-\!\frac{W}{B}d_1d_2\beta_1\beta_2 (y_1y_2)^{i+1})}{(1\!-\!\lambda_1\beta_1 y_1^i\!-\!\lambda_2\beta_2 y_2^i
\!-\!\frac{W}{B}d_1d_2\beta_1\beta_2(y_1y_2)^i)}\\
&\hspace{-1.3cm}W_{2i}=W\frac{(1\!-\!\lambda_1 y_1^i\!-\!\lambda_2 y_2^i
\!-\!\frac{W}{B}d_1d_2(y_1y_2)^i)}{(1\!-\!\lambda_1 y_1^{i+1}\!-\!\lambda_2 y_2^{i+1}
\!-\!\frac{W}{B}d_1d_2(y_1y_2)^{i+1})}\times \\
&\hspace{2.cm}\times\frac{(1\!-\!\lambda_1\beta_1^{-1}y_1^{i+2}\!-\!\lambda_2 \beta_2^{-1}y_2^{i+2}
\!-\!\frac{W}{B}d_1d_2\beta_1^{-1} \beta_2^{-1}(y_1y_2)^{i+2})}{(1\!-\!\lambda_1\beta_1^{-1}y_1^{i+1}\!-\!\lambda_2 \beta_2^{-1}y_2^{i+1}
\!-\!\frac{W}{B}d_1d_2\beta_1^{-1} \beta_2^{-1}(y_1y_2)^{i+1})}\\
\end{split}
\end{equation*}
\begin{equation*}
\begin{split}
&\hspace{-1.3cm}B_{2i+1}=B\frac{(1\!-\!\lambda_1 y_1^{i+2}\!-\!\lambda_2 y_2^{i+2}
\!-\!\frac{W}{B}d_1d_2(y_1y_2)^{i+2})}{(1\!-\!\lambda_1 y_1^{i+1}\!-\!\lambda_2 y_2^{i+1}
\!-\!\frac{W}{B}d_1d_2(y_1y_2)^{i+1})}\times \\
&\hspace{2.cm}\times\frac{(1\!-\!\lambda_1\beta_1^{-1}y_1^{i+1}\!-\!\lambda_2  \beta_2^{-1}y_2^{i+1}
\!-\!\frac{W}{B}d_1d_2\beta_1^{-1} \beta_2^{-1}(y_1y_2)^{i+1})}{(1\!-\!\lambda_1\beta_1^{-1}y_1^{i+2}\!-\!\lambda_2  \beta_2^{-1}y_2^{i+2}
\!-\!\frac{W}{B}d_1d_2\beta_1^{-1} \beta_2^{-1}(y_1y_2)^{i+2})}\\
&\hspace{-1.3cm}W_{2i+1}=W\frac{(1\!-\!\lambda_1y_1^{i+2}\!-\!\lambda_2 y_2^{i+2}
\!-\!\frac{W}{B}d_1d_2(y_1y_2)^{i+2})}{(1\!-\!\lambda_1 y_1^{i+1}\!-\!\lambda_2 y_2^{i+1}
\!-\!\frac{W}{B}d_1d_2(y_1y_2)^{i+1})}\times \\
&\hspace{2.cm}\times\frac{(1\!-\!\lambda_1\beta_1 y_1^{i}\!-\!\lambda_2 \beta_2 y_2^{i}
\!-\!\frac{W}{B}d_1d_2\beta_1\beta_2 (y_1y_2)^{i})}{(1\!-\!\lambda_1\beta_1 y_1^{i+1}\!-\!\lambda_2\beta_2 y_2^{i+1}
\!-\!\frac{W}{B}d_1d_2\beta_1 \beta_2 (y_1y_2)^{i+1})}\\
&\hspace{-1.3cm}\hbox{where}\ \lambda_1= \frac{d_1-y_1d_2}{d_1-d_2}, \quad \lambda_2= \frac{d_2-y_2d_1}{d_2-d_1}, \quad \beta_1=\frac{d_1+y_1}{1+d_1},
\quad \beta_2=\frac{d_2+y_2}{1+d_2},
\end{split}
\end{equation*}
which are well suited for series expansions. For instance, we get
\begin{equation*}
\begin{split}
&\hspace{-1.3cm}B_1=\tb + \tb(\tb^2 + 3 \tb 
        \tw + \tw^2) + \tb(3 \tb^4 + 24 \tb^3 \tw + 46 \tb^2 \tw^2 + 24 \tb \tw^3 + 3 \tw^4) + \cdots \\
&\hspace{-1.3cm}B_2 = \tb + \tb(\tb^2 + 5 \tb \tw + 3 \tw^2) + \tb(3 \tb^4 + 36 \tb^3 
              \tw + 99 \tb^2 \tw^2 + 77 \tb \tw^3 + 15 \tw^4)+\cdots \\
&\hspace{-1.3cm}B_3 = \tb + \tb (\tb^2 + 6 \tb \tw + 3 \tw^2) + \tb(3 \tb^4 + 48 \tb^3 \tw + 132 \tb^2 \tw^2 + 91 \tb \tw^3 + 15 \tw^4)+\cdots \\
\end{split}
\end{equation*}
so that
\begin{equation*}
\begin{split}
&\hspace{-1.3cm}G_1^\bullet=(\tb^3\tw + 3\tb^2\tw^2 + \tb\tw^3) + (3\tb^5\tw + 24\tb^4\tw^2 + 46\tb^3\tw^3 + 24\tb^2\tw^4 + 3\tb\tw^5)+\cdots \\
&\hspace{-1.3cm}G_2^\bullet=(2\tb^3\tw + 2\tb^2\tw^2) + (12\tb^5\tw + 53\tb^4\tw^2 + 53\tb^3\tw^3 + 12\tb^2\tw^4)+\cdots \\
&\hspace{-1.3cm}G_3^\bullet=\tb^2\tw^2+ (12\tb^4\tw^2 + 33\tb^3\tw^3 + 14\tb^2\tw^4)+\cdots \\ 
\end{split}
\end{equation*}
 
\section{Another approach via hard dimers}
\label{sec:harddimers}
As a check of our results, it is a nice exercise to recover some of our formulas from a completely different
approach relating the Hankel determinants to generating functions of hard dimers on bicolored segments.
Such an approach was already used in \cite{BG12} to compute the two-point function of quadrangulations
and we will repeat the same arguments in our slightly more involved situation where we keep track of the
black and white vertex weights. Interestingly enough, this approach may also be extended to the case 
of hexangulations, as we shall discuss below.
 
\subsection{The case of quadrangulations}
\label{sec:harddimersquad}
\begin{figure}
\begin{center}
\includegraphics[width=8cm]{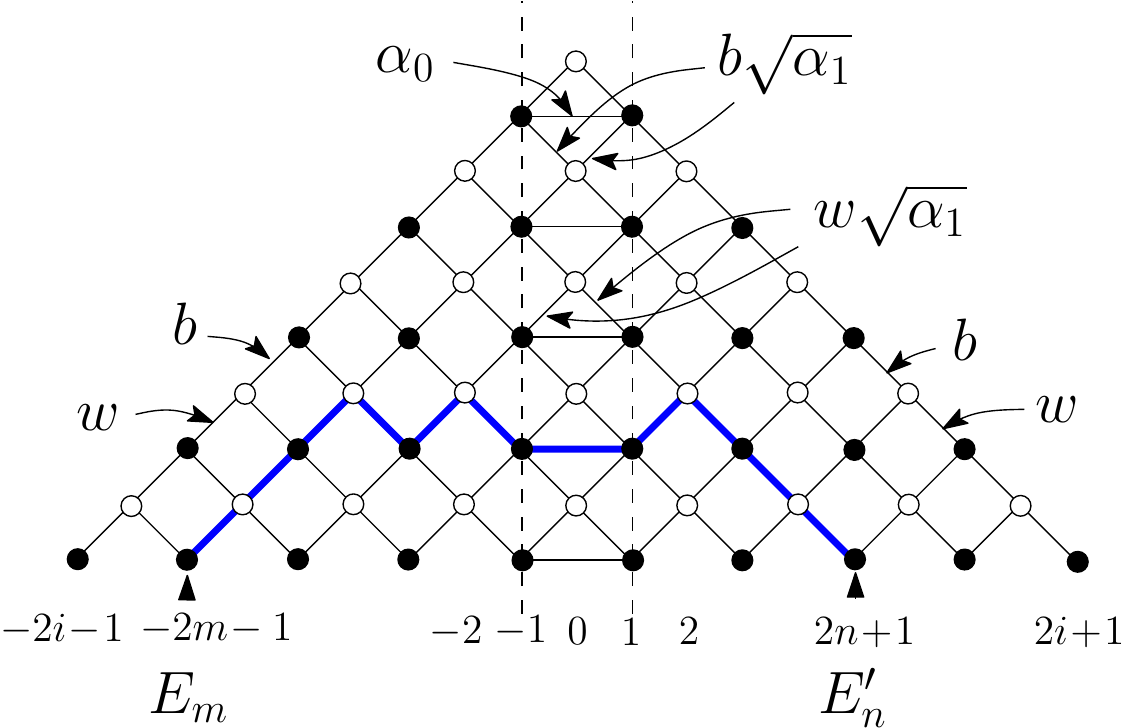}
\end{center}
\caption{A graph designed so that the generating function for directed (from left to right) paths 
from $E_m$  (with coordinates $(-2m-1,0)$) to $E_n'$ (with coordinates $(2n+1,0)$) precisely 
reproduces $F_{n+m}$ for quadrangulations.}
\label{fig:graphone}
\end{figure}
\begin{figure}
\begin{center}
\includegraphics[width=12cm]{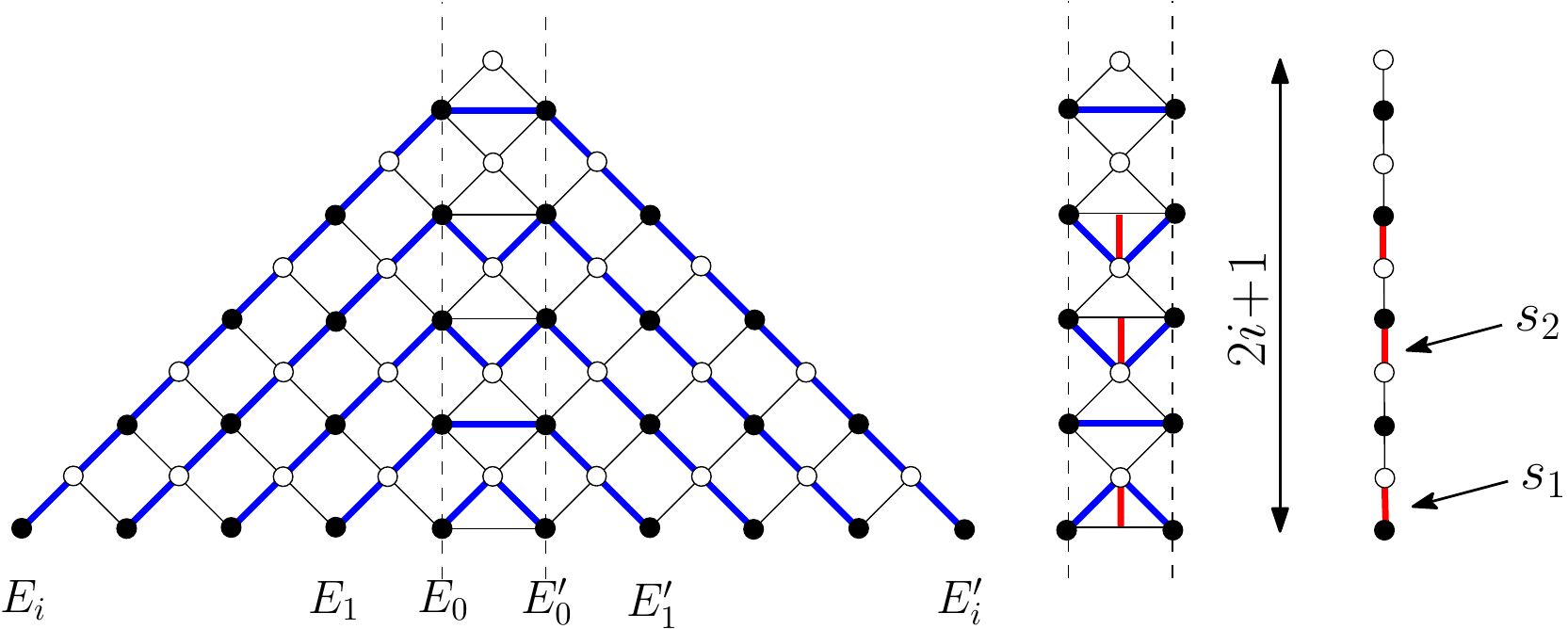}
\end{center}
\caption{Mutually avoiding paths on the graph of Figure \ref{fig:graphone}, from
$E_0,E_1, \dots E_i$ to $E_0', E_1',\cdots E_i'$. The only freedom 
comes from the central column (between abscissas $-1$ and $1$). The path configurations 
in this column are in one-to-one correspondence with hard dimers on a bicolored segment 
of length $2i+1$.}
\label{fig:graphonebis}
\end{figure}

In the case of quadrangulations, we have
\begin{equation*}
\begin{split}
F_n^\bullet=& \alpha_0 \hat{\mathbb{Z}}_{0,0}^{\bullet\bullet +}(2n)+\alpha_1 \hat{\mathbb{Z}}_{0,0}^{\bullet\bullet +}(2n+2)\\
&\alpha_0=1+\frac{BW}{t_\bullet} \qquad \alpha_1=-\frac{B}{t_\bullet}\ .\\
\end{split}
\end{equation*}
The function $F_{m+n}$ may therefore be understood as the generating function for configurations of a directed (from left to right) path 
starting from point $E_m$  (with coordinates $(-2m-1,0)$) and ending at point $E_n'$ (with coordinates $(2n+1,0)$), traveling 
on the graph of Figure \ref{fig:graphone}, with appropriate edge weights designed so as to reproduce the above formula.
More precisely, all diagonal edges used by the path receive a weight $b=\sqrt{B}$ or $w=\sqrt{W}$ respectively according to the
white or black color of their lower vertex, except for the diagonal edges lying in the central vertical column (i.e.,\ whose 
abscissas are between $-1$ and $1$), which receive instead a weight $b\sqrt{\alpha_1}$ or $w\sqrt{\alpha_1}$ 
(according again to the white or black color of their lower vertex). As for any horizontal edge used by the path in the central column,
it  gives rise instead to a weight $\alpha_0$. Now, from the LGV (Lindstr\"om-Gessel-Viennot) lemma, see for instance
\cite{GV85,GV89}, $h_i^{(0)}=\det_{0\leq m,n\leq i}F_{m+n}$
enumerates configurations of $i+1$ \emph{mutually avoiding} directed paths with starting points $E_0,\cdots E_i$ and 
endpoints $E_0',\cdots,E_i'$, see Figure \ref{fig:graphonebis}. Because of the constraint of mutual avoidance, the parts of the paths lying outside
the central column are entirely fixed, made only of ascending steps on the left side leading to black vertices with abscissa $-1$ and
heights $0,2,\cdots,2i$ and made only of descending steps
on the right side starting from black vertices with abscissa $1$ and heights $0,2,\cdots,2i$. The total weight of these portions of paths 
is $(BW)^{\frac{i(i+1)}{2}}$. 
As for the parts of the paths in the central column, they connect two black vertices of the same height (between $0$ and $2i$).
This connection is done either via a horizontal edge, or via a sequence of two consecutive up/down or down/up diagonal edges. 
We may therefore decide to
weigh the central column by $\alpha_0^{i+1}$ and to correct by a multiplicative weight $s_1=W (\alpha_1/\alpha_0)$ for
each used up/down sequence, and  a multiplicative weight $s_2=B (\alpha_1/\alpha_0)$ for
each used down/up sequence. Note now that the mutual avoidance constraint prevents up/down and down/up portions
to share a common white vertex, so that these portions repel each other and (by a simple vertical projection) act as
\emph{hard dimers on a bicolored} (vertical) \emph{oriented} (from bottom to top) \emph{segment}, see Figure \ref{fig:graphonebis}.

More precisely, we call a \emph{bicolored oriented segment} an oriented segment (i.e.,\ a finite oriented linear graph) 
made of links whose nodes are bicolored alternatively in black and white. 
Each link of the segment may be occupied by a dimer or not, with the constraint that \emph{a node is incident 
to at most one dimer}. Each dimer lying on a link oriented from a black to a white node receives the weight $s_1$ and each dimer lying 
on a link oriented from a white to a black node receives the weight $s_2$. 
We shall denote by $Z_{HD[0,2i]}^{\bullet\bullet}\equiv Z_{HD[0,2i]}^{\bullet\bullet}(s_1,s_2)$ the generating
function of hard dimers on a bicolored oriented segment made of $2i$ links whose first and last nodes are black.  We shall also use
the notations $Z_{HD[0,2i]}^{\circ\circ}$, $Z_{HD[0,2i+1]}^{\bullet\circ}$ and $Z_{HD[0,2i+1]}^{\circ\bullet}$
for the other possible colors of the extremal nodes, with obvious definitions. 

In the present case, the hard dimers configurations gathering the weights of the central column live on a segment of length $2i+1$
starting with a black node and ending with a white one, so that we 
may eventually write
\begin{equation*}
\begin{split}
&h_i^{(0)}=(BW)^{\frac{i(i+1)}{2}}\alpha_0^{i+1} Z_{HD[0,2i+1]}^{\bullet\circ}\\
& \hbox{with dimer weights}\ s_1=W\frac{\alpha_1}{\alpha_0}\qquad s_2=B\frac{\alpha_1}{\alpha_0}\ .\\
\end{split}
\end{equation*}
The generating functions  $Z_{HD[0,2i]}^{\bullet\bullet}$, $Z_{HD[0,2i]}^{\circ\circ}$, $Z_{HD[0,2i+1]}^{\bullet\circ}$ and $Z_{HD[0,2i+1]}^{\circ\bullet}$
are computed in Appendix C.  They are best expressed upon introducing 
the parametrization
 \begin{equation*}
s_1=- \frac{x}{(c+x)(1+c\, x)}\qquad s_2=-\frac{c^2x}{(c+x)(1+c\, x)}\ ,
\end{equation*}
which is achieved by taking
\begin{equation*}
c=\sqrt{\frac{s_2}{s_1}}=\sqrt{\frac{B}{W}}\qquad x+\frac{1}{x}=-\frac{1+s_1+s_2}{c\, s_1}=-\frac{\frac{\alpha_0}{\alpha_1}+B+W}{\sqrt{B W}}\ .
\end{equation*}
The definition of $c$ matches precisely our definition of the general formalism. As for $x$, using \eqref{eq:recurBWquad}, we find that 
$(\alpha_0/\alpha_1)+B+W=-(1-2(B+W))$ so that the above equation for $x$
matches precisely the characteristic equation \eqref{eq:charquad}.
In terms of $x$ and $c$, we have (see Appendix C)
\begin{equation*}
Z_{HD[0,2i+1]}^{\bullet\circ}=(1+c\, x)\left(\frac{c}{(c+x)(1+c\, x)}\right)^{i+1} \frac{1-\frac{c+x}{1+c\, x} x^{2i+3}}{1-x^2}\ .
\end{equation*}
so that 
\begin{equation*}
h_i^{(0)}=(BW)^{\frac{i(i+1)}{2}}\left(\frac{\alpha_0 c}{(c+x)(1+c\, x)}\right)^{i+1} \frac{(1+c\, x)}{1-x^2} \left(1-\frac{c+x}{1+c\, x} x^{2i+3}\right)\ .
\end{equation*}
This is precisely the result \eqref{eq:hzeroquad} of our general formalism, since
\begin{equation*}
\frac{\alpha_0 c}{(c+x)(1+c\, x)}=\frac{\alpha_0}{x}\sqrt{s_1\, s_2}=  \frac{\alpha_0}{x}\left(-\frac{\alpha_1}{\alpha_0}\right)\sqrt{BW}=\frac{B}{t_\bullet} \frac{\sqrt{BW}}{x}\ .
\end{equation*}
\begin{figure}
\begin{center}
\includegraphics[width=8cm]{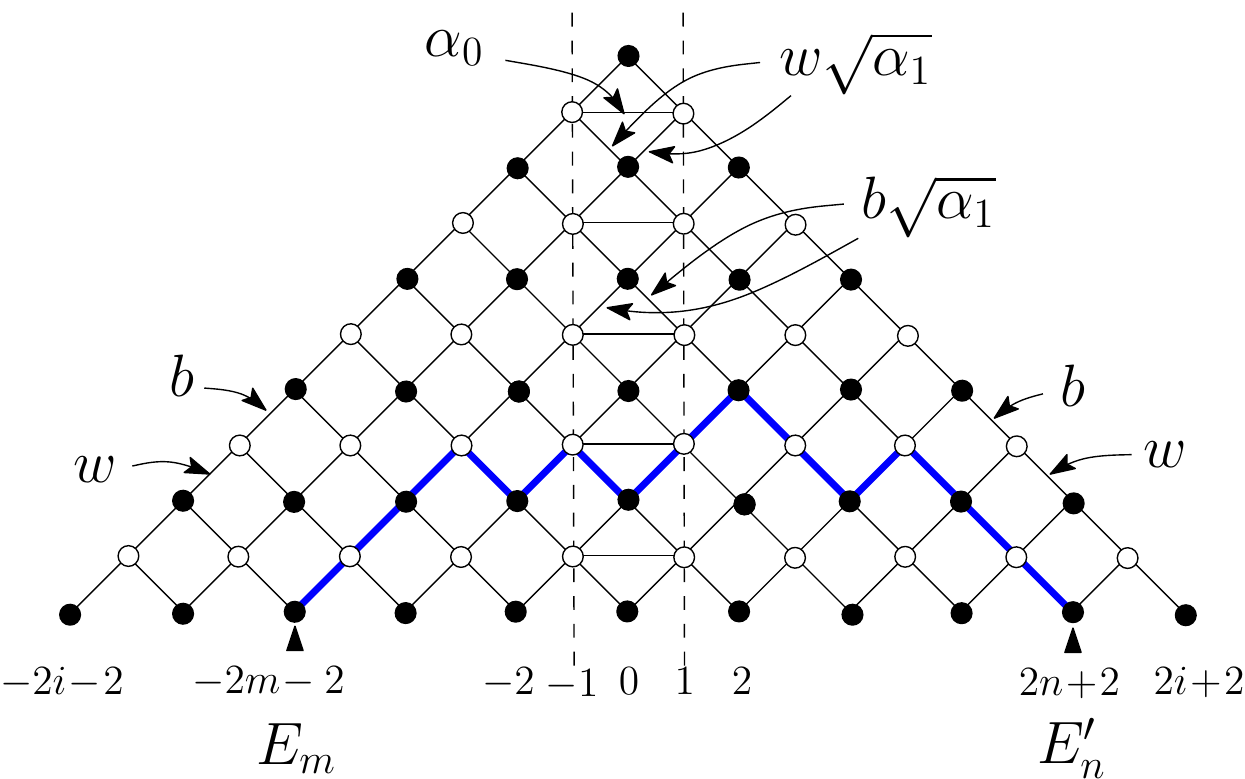}
\end{center}
\caption{A graph designed so that the generating function for directed (from left to right) paths 
from $E_m$  (with coordinates $(-2m-2,0)$) to $E_n'$ (with coordinates $(2n+2,0)$) precisely 
reproduces $F_{n+m+1}$ for quadrangulations.}
\label{fig:graphtwo}
\end{figure}
\begin{figure}
\begin{center}
\includegraphics[width=12cm]{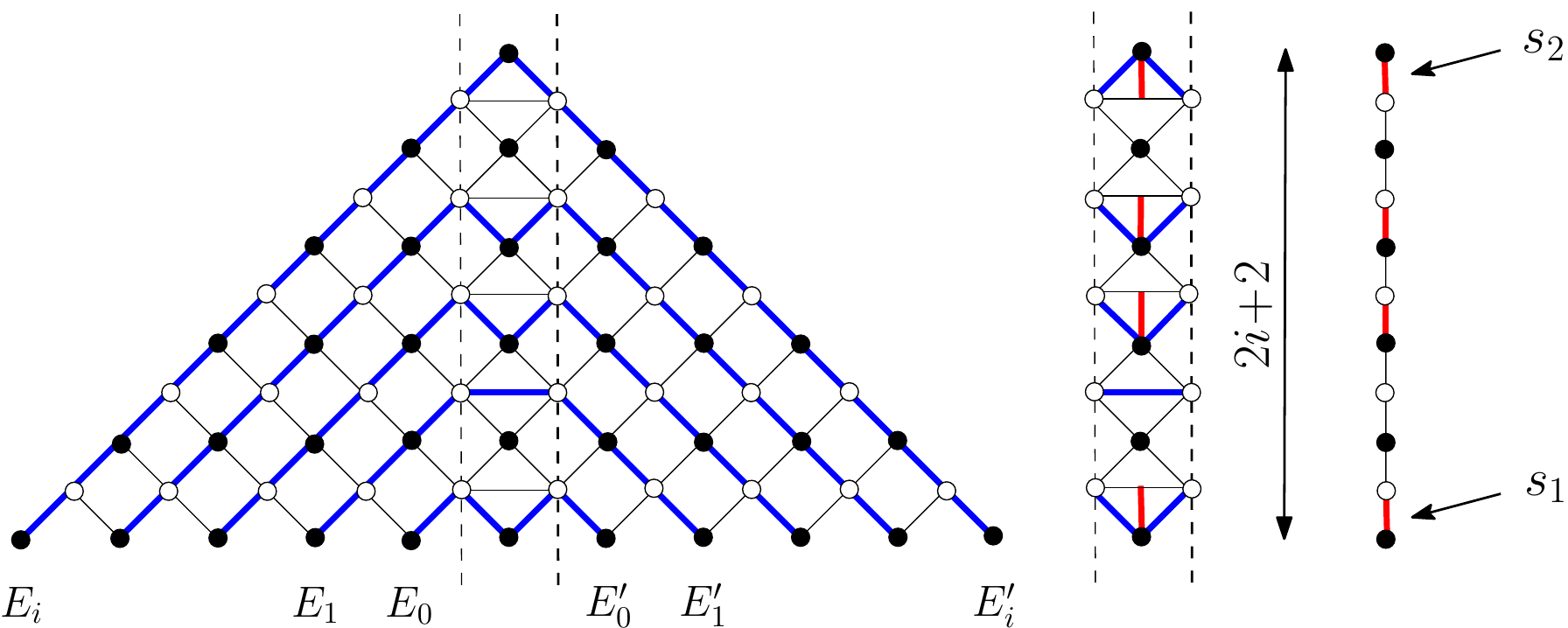}
\end{center}
\caption{Mutually avoiding paths on the graph of Figure \ref{fig:graphtwo}, from
$E_0,E_1, \dots E_i$ to $E_0', E_1',\cdots E_i'$. The only freedom 
comes from the central column (between abscissas $-1$ and $1$). The path configurations 
in this column are in one-to-one correspondence with hard dimers on a bicolored segment 
of length $2i+2$.}
\label{fig:graphtwobis}
\end{figure}

We may now play the same game to compute $h_i^{(1)}$ by interpreting the function $F_{m+n+1}$ as the generating function for configurations of a directed (from left to right) path starting from point $E_m$  (with coordinates $(-2m-2,0)$) and ending at point $E_n'$ (with coordinates $(2n+2,0)$), traveling now on the graph of Figure \ref{fig:graphtwo}, with the same weight prescription as before. From 
the LGV lemma, $h_i^{(1)}=\det_{0\leq m,n\leq i}F_{m+n+1}$
now enumerates configurations of $i+1$ mutually avoiding directed paths with starting points $E_0,\cdots E_i$ and 
endpoints $E_0',\cdots,E_i'$, see Figure \ref{fig:graphtwobis}. The total weight of the (entirely fixed) portions of paths outside 
of the central column is now
$W^{i+1}(BW)^{\frac{i(i+1)}{2}}$, while the contribution of the central column now reads $\alpha_0^{i+1}
Z_{HD[0,2i+2]}^{\bullet\bullet}$, so that (see Appendix C for the formula for $Z_{HD[0,2i+2]}^{\bullet\bullet}$)
\begin{equation*}
\begin{split}
h_i^{(1)}&=W^{i+1}(BW)^{\frac{i(i+1)}{2}}\alpha_0^{i+1}
Z_{HD[0,2i+2]}^{\bullet\bullet}\\
&=W^{i+1}(BW)^{\frac{i(i+1)}{2}}\left(\frac{\alpha_0 c}{(c+x)(1+c\, x)}\right)^{i+1} \frac{1}{1-x^2}\left(1- x^{2i+4}\right)
\end{split}
\end{equation*}
in agreement with the formula \eqref{eq:honequad} of the general formalism.
We leave as an exercise to the reader the care of computing $\tilde{h}_i^{(0)}$ and $\tilde{h}_i^{(1)}$ via the dimer
formalism and checking that their expressions match the formulas of the general formalism.

\subsection{The case of hexangulations}
\label{sec:harddimershex}
\begin{figure}
\begin{center}
\includegraphics[width=8cm]{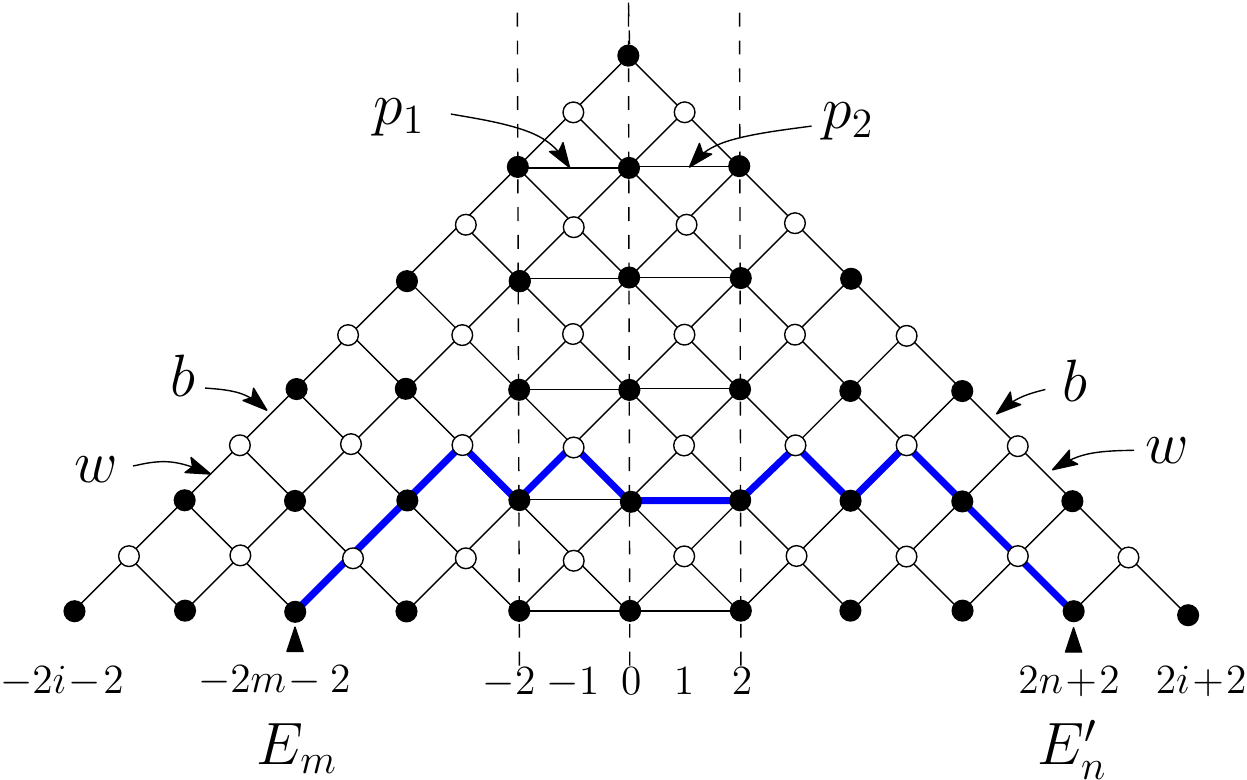}
\end{center}
\caption{A graph designed so that the generating function for directed (from left to right) paths 
from $E_m$  (with coordinates $(-2m-2,0)$) to $E_n'$ (with coordinates $(2n+2,0)$) precisely 
reproduces $F_{n+m}$ for hexangulations.}
\label{fig:graphthree}
\end{figure}
\begin{figure}
\begin{center}
\includegraphics[width=12cm]{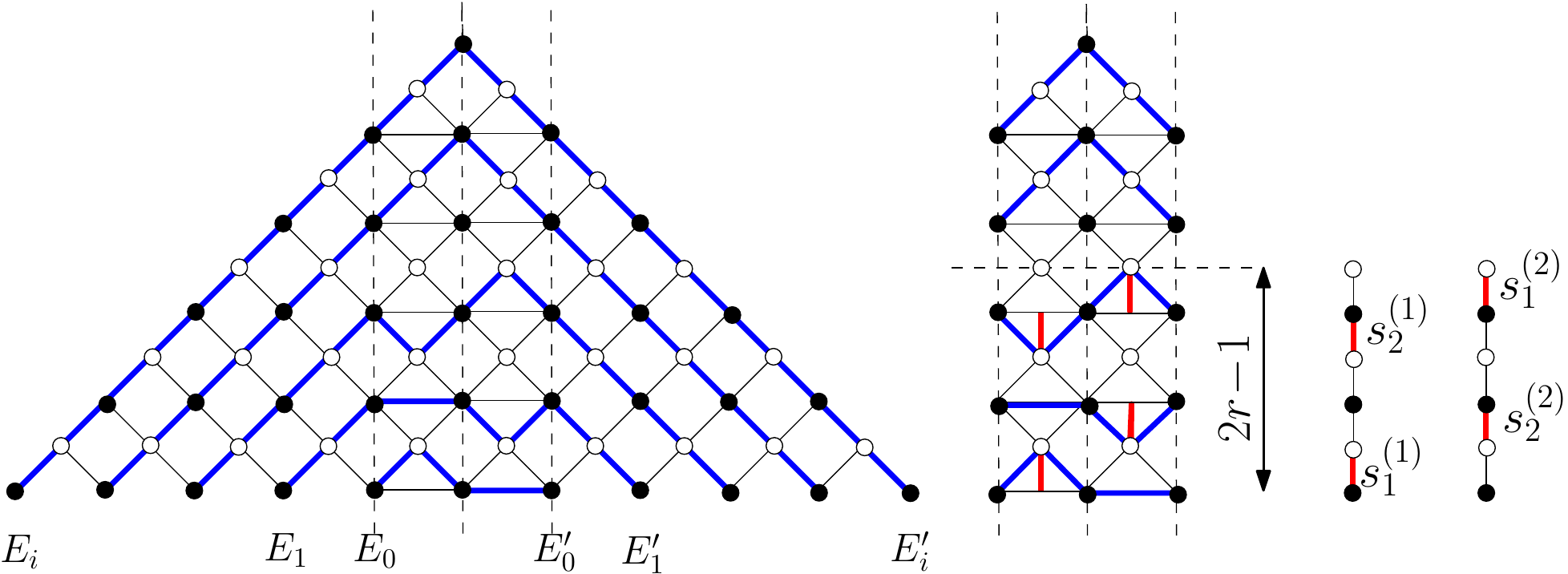}
\end{center}
\caption{Mutually avoiding paths on the graph of Figure \ref{fig:graphthree}, from
$E_0,E_1, \dots E_i$ to $E_0', E_1',\cdots E_i'$. The only freedom 
comes from the two central columns (between abscissas $-2$ and $2$). The path configurations 
in these columns are in one-to-one correspondence with couples of two hard dimer configurations on bicolored 
segments of the same length $2r-1$, with $r$ ranging from $0$ to $i+1$ (by convention, the contribution of configurations with $r=0$
is taken to be $1$).}
\label{fig:graphthreebis}
\end{figure}

 In the case of hexangulations, we have
\begin{equation*}
\begin{split}
F_n^\bullet=& \alpha_0 \hat{\mathbb{Z}}_{0,0}^{\bullet\bullet +}(2n)+\alpha_1 \hat{\mathbb{Z}}_{0,0}^{\bullet\bullet +}(2n+2)
+\alpha_2 \hat{\mathbb{Z}}_{0,0}^{\bullet\bullet +}(2n+4)\\
& \alpha_0=\frac{B}{t_\bullet}(1-L_0(4))=\frac{B}{t_\bullet}(1-B^2-W^2-4 B W)\\
&\alpha_1=\frac{B}{t_\bullet}(-L_0(2))=-\frac{B}{t_\bullet}(B+W)\\
&\alpha_2=\frac{B}{t_\bullet}(-L_0(0))=-\frac{B}{t_\bullet}\ .\\
\end{split}
\end{equation*}
To compute $h_i^{(0)}$, we shall here use a slightly different strategy from that used for quadrangulations, i.e.,\ look
at the function $F_{m+n}/\alpha_2$.
The function $F_{m+n}/\alpha_2$ is indeed the generating function for configurations of a directed (from left to right) path 
starting from point $E_m$  (with coordinates $(-2m-2,0)$) and ending at point $E_n'$ (with coordinates $(2n+2,0)$), traveling 
on the graph of Figure \ref{fig:graphthree}, with the following appropriate edge weights:
all diagonal edges used by the path receive a weight $b=\sqrt{B}$ or $w=\sqrt{W}$ 
respectively according to the
white or black color of their lower vertex, \emph{including those diagonal edges lying in the two central vertical columns} (i.e.,\ whose 
abscissas are between $-2$ and $2$). As for any horizontal edge used by the path in the two central columns,
they give rise instead to a weight $p_1$ for the first central column (with abscissas between $-2$ and $0$) and 
$p_2$ in the second central column (with abscissas between $0$ and $2$), with
\begin{equation*}
p_1+p_2=\frac{\alpha_1}{\alpha_2}=B+W \qquad p_1 p_2=\frac{\alpha_0}{\alpha_2}=B^2+W^2+4 B W-1\ .
\end{equation*}
Indeed, with these weights, the set of those paths using no horizontal edge indeed contributes $\hat{\mathbb{Z}}_{0,0}^{\bullet\bullet +}(2m+2n+4)$,
that of those paths using one horizontal edge contributes $(\alpha_1/\alpha_2)\hat{\mathbb{Z}}_{0,0}^{\bullet\bullet +}(2m+2n+2)$
and that of those paths using two horizontal edges contributes $(\alpha_0/\alpha_2)\hat{\mathbb{Z}}_{0,0}^{\bullet\bullet +}(2m+2n)$.
As before, $h_i^{(0)}/\alpha_2^{i+1}$ enumerates mutually avoiding paths with starting points $E_0,\cdots E_i$ and
endpoints $E_0',\cdots, E_i'$, see Figure \ref{fig:graphthreebis}. Again the parts of the paths lying outside
the two central columns is entirely fixed, made only of ascending steps on the left side leading to black vertices with abscissa $-2$ and
heights $0,2,\cdots,2i$ and made only of descending steps
on the right side starting from black vertices with abscissa $2$ and heights $0,2,\cdots,2i$. The total weight of these portions of paths 
is again $(BW)^{\frac{i(i+1)}{2}}$. 
The parts of the paths in the two central columns connect two black vertices of the same height (between $0$ and $2i$) and
it is interesting to classify these paths according to the position of their passage at abscissa $0$ (i.e.,\ at the contact of the
two central columns). Due to the mutual avoidance constraint, the corresponding heights are $0,2,\cdots,2 r-2$ for the lower
$r$ paths and $2r+2,2r+4,\cdots 2i+2$ for the higher $i+1-r$ paths, with $r$ some integer ranging from $0$ to $i+1$,
see Figure \ref{fig:graphthreebis}.
The last higher $i+1-r$ paths contribute $(BW)^{i+1-r}$ while the $r$ first ones contribute
$(p_1)^r Z_{HD[0,2r-1]}^{\bullet\circ}(s_1^{(1)},s_2^{(1)})$ (from the first central column) and 
$(p_2)^r Z_{HD[0,2r-1]}^{\bullet\circ}(s_1^{(2)},s_2^{(2)})$ (from the second central column) with
\begin{equation*}
s_1^{(1)}=\frac{W}{p_1}\qquad s_2^{(1)}=\frac{B}{p_1}\qquad s_1^{(2)}=\frac{W}{p_2}\qquad s_2^{(2)}=\frac{B}{p_2}
\end{equation*}
and the convention $Z_{HD[0,-1]}^{\bullet\bullet}=1$.
This leads to the formula
\begin{equation}
\hspace{-.3cm} \frac{h_i^{(0)}}{\alpha_2^{i+1}}= (BW)^{\frac{i(i+1)}{2}} \sum_{r=0}^{i+1}(BW)^{i+1-r} (p_1p_2)^r Z_{HD[0,2r-1]}^{\bullet\circ}(s_1^{(1)},s_2^{(1)})Z_{HD[0,2r-1]}^{\bullet\circ}(s_1^{(2)},s_2^{(2)})\ .
\label{eq:hzerohexdimer}
\end{equation}
Using the parametrization
\begin{equation*}
\begin{split}
&s_1^{(1)}=\frac{W}{p_1}=-\frac{x_1}{(c+x_1)(1+c\, x_1)}\qquad s_2^{(1)}=\frac{B}{p_1}=-\frac{c^2 x_1}{(c+x_1)(1+c\, x_1)}\\
&s_1^{(2)}=\frac{W}{p_2}=-\frac{x_2}{(c+x_2)(1+c\, x_2)}\qquad s_2^{(2)}=\frac{B}{p_2}=-\frac{c^2 x_2}{(c+x_2)(1+c\, x_1)}\\
\end{split}
\end{equation*}
by setting
\begin{equation*}
\begin{split}
&c=\sqrt{\frac{s_2^{(1)}}{s_1^{(1)}}}=\sqrt{\frac{s_2^{(2)}}{s_1^{(2)}}}=\sqrt{\frac{B}{W}}\\
&x_1+\frac{1}{x_1}=-\frac{1+s_1^{(1)}+s_2^{(1)}}{c\, s_1^{(1)}}=-\frac{p_1+B+W}{\sqrt{BW}}\\
&x_2+\frac{1}{x_2}=-\frac{1+s_1^{(2)}+s_2^{(2)}}{c\, s_1^{(2)}}=-\frac{p_2+B+W}{\sqrt{BW}}\ ,\\
\end{split}
\end{equation*}
we can use our general formula for $Z_{HD[0,2r-1]}^{\bullet\circ}$ and write for instance
\begin{equation*}
\begin{split}
(p_1)^r Z_{HD[0,2r-1]}^{\bullet\circ}(s_1^{(1)},s_2^{(1)})&=(1+c\, x_1)\left(\frac{p_1 c}{(c+x_1)(1+c\, x_1)}\right)^{r} \frac{1-\frac{c+x_1}{1+c\, x_1} x_1^{2r+1}}{1-x_1^2}\\
&=(1+c\, x_1)\left(\frac{\sqrt{BW}}{x_1}\right)^{r} \frac{1-\frac{c+ x_1}{1+c\,  x_1} x_1^{2r+1}}{1-x_1^2}\\
\end{split}
\end{equation*}
(note that the formula also holds for $r=0$ with our convention $Z_{HD[0,-1]}^{\bullet\bullet}=1$). Eq.~\eqref{eq:hzerohexdimer} yields
\begin{equation*}
\hspace{-1.3cm}h_i^{(0)}=\alpha_2^{i+1}(BW)^{\frac{(i+1)(i+2)}{2}}\frac{1+c\, x_1}{1-x_1^2}\frac{1+c\, x_2}{1-x_2^2}
\sum_{r=0}^{i+1} (x_1^{-r}-\frac{c+ x_1}{1+c\,  x_1} x_1^{r+1})(x_2^{-r}-\frac{c+ x_2}{1+c\,  x_2} x_2^{r+1})\ .
\end{equation*}
The sum over $r$ is easily performed and we end up with the desired result
\begin{equation*}
h_i^{(0)} =\alpha_2^{i+1} (B W)^{\frac{(i+1)(i+2)}{2}} \frac{1}{(x_1 x_2)^{i+1}}
\frac{1+c\, x_1}{1-x_1^2}\frac{1+c\, x_2}{1-x_2^2}\frac{1}{1-x_1x_2}\, \bar{u}_{2i+3}
\end{equation*}
with $\bar{u}_i$ as in eq.~\eqref{eq:hzerohex}.
For a full check of consistency, we still need to verify that $x_1$ and $x_2$ are the solutions
of the correct characteristic equation \eqref{eq:charhex}. From their definition, $p_1$ and $p_2$ are
solutions of the equation
\begin{equation*}
0=p^2-\frac{\alpha_1}{\alpha_2}p+\frac{\alpha_0}{\alpha_2}=p^2-(B+W)p-1+B^2+W^2+4BW
\end{equation*}
so that $x_1$ and $x_2$ are solutions of 
\begin{equation*}
\begin{split}
\hspace{-.3cm} 0=&\left(-(B+W)-\sqrt{BW} \left(x+\frac{1}{x}\right)\right)^2-(B+W)\left(-(B+W)-\sqrt{BW} \left(x+\frac{1}{x}\right)\right)\\
&\hspace{8.cm}  -1+B^2+W^2+4BW\\
\end{split}
\end{equation*}
which after simplification precisely reproduces \eqref{eq:charhex}.

\begin{figure}
\begin{center}
\includegraphics[width=8cm]{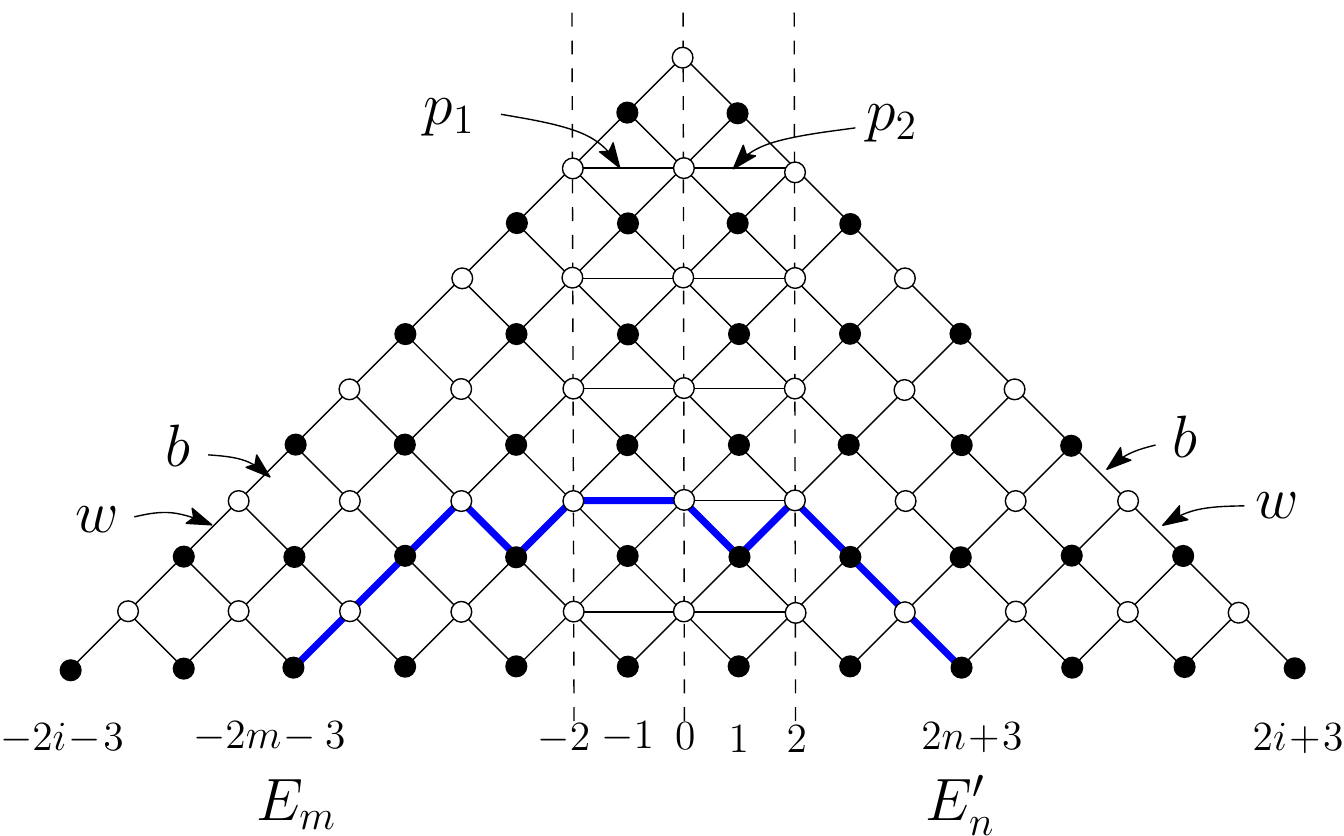}
\end{center}
\caption{A graph designed so that the generating function for directed (from left to right) paths 
from $E_m$  (with coordinates $(-2m-3,0)$) to $E_n'$ (with coordinates $(2n+3,0)$) precisely 
reproduces $F_{n+m+1}$ for hexangulations.}
\label{fig:graphfour}
\end{figure}
\begin{figure}
\begin{center}
\includegraphics[width=12cm]{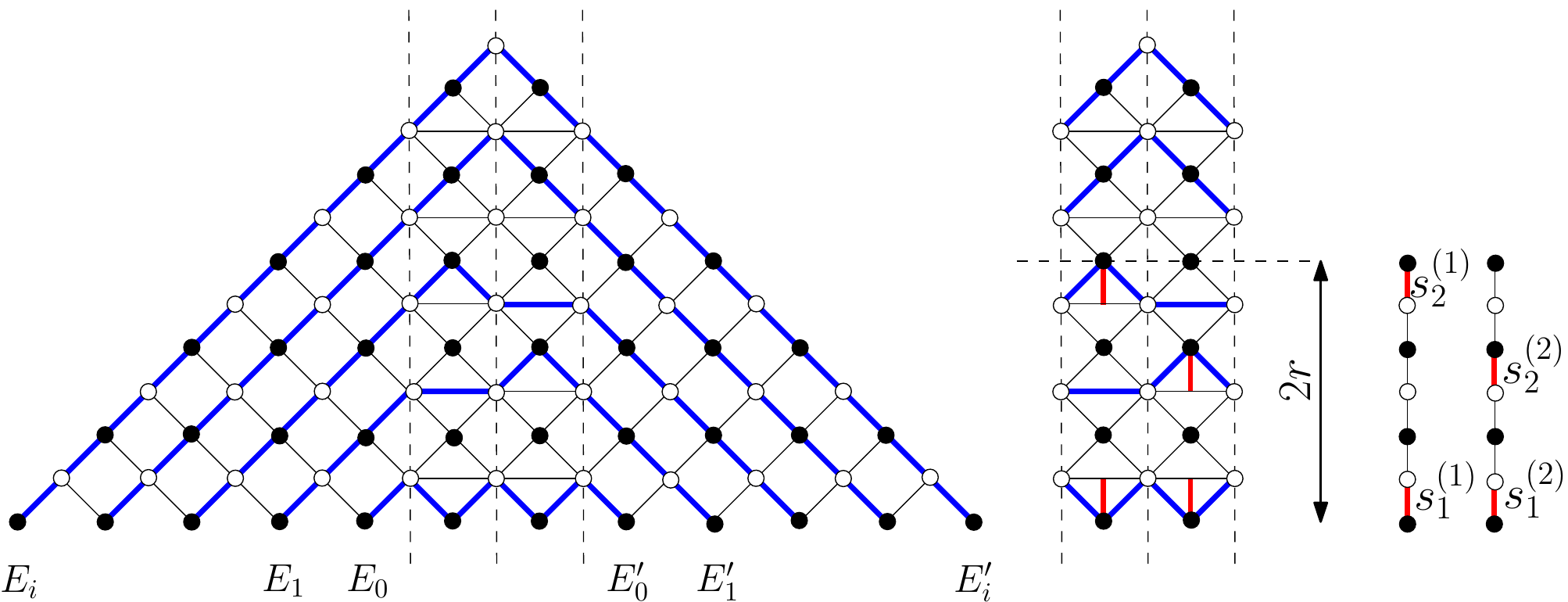}
\end{center}
\caption{Mutually avoiding paths on the graph of Figure \ref{fig:graphfour}, from
$E_0,E_1, \dots E_i$ to $E_0', E_1',\cdots E_i'$. The only freedom 
comes from the two central columns (between abscissas $-2$ and $2$). The path configurations 
in these columns are in one-to-one correspondence with pairs of two hard dimer configurations on bicolored 
segments of the same length $2r$, with $r$ ranging from $0$ to $i+1$.}
\label{fig:graphfourbis}
\end{figure}

If we now wish to play the same game to compute $h_i^{(1)}$,  we interprete $F_{m+n+1}$ as the generating function for configurations of a directed path starting from point $E_m$  (with coordinates $(-2m-3,0)$) and ending at point $E_n'$ (with coordinates $(2n+3,0)$) traveling on the graph of 
Figure \ref{fig:graphfour}, with the same weight prescription as before. Then  $h_i^{(1)}/\alpha_2^{i+1}$ enumerates mutually avoiding paths with starting points $E_0,\cdots E_i$ and endpoints $E_0',\cdots, E_i'$, see Figure~\ref{fig:graphfourbis}, and we now obtain 
\begin{equation*}
\hspace{-.3cm} \frac{h_i^{(1)}}{\alpha_2^{i+1}}= W^{i+1}(BW)^{\frac{i(i+1)}{2}} \sum_{r=0}^{i+1}(BW)^{i+1-r} (p_1p_2)^r Z_{HD[0,2r]}^{\bullet\bullet}(s_1^{(1)},s_2^{(1)})Z_{HD[0,2r]}^{\bullet\bullet}(s_1^{(2)},s_2^{(2)})
\end{equation*}
which yields
\begin{equation*}
\hspace{-1.3cm}h_i^{(1)}=\alpha_2^{i+1}W^{i+1}(BW)^{\frac{(i+1)(i+2)}{2}}\frac{1}{1-x_1^2}\frac{1}{1-x_2^2}
\sum_{r=0}^{i+1} (x_1^{-r}-x_1^{r+2})(x_2^{-r}-x_2^{r+2})
\end{equation*}
and, after summation over $r$
\begin{equation*}
h_i^{(1)} =\alpha_2^{i+1} W^{i+1} (B W)^{\frac{(i+1)(i+2)}{2}} \frac{1}{(x_1 x_2)^{i+1}}
\frac{1}{1-x_1^2}\frac{1}{1-x_2^2}\frac{1}{1-x_1x_2}\, u_{2i+4}
\end{equation*}
with $u_i$ as in eq.~\eqref{eq:honehex}.
We again leave as an exercise to the reader the care of computing $\tilde{h}_i^{(0)}$ and $\tilde{h}_i^{(1)}$ via the dimer
formalism and checking that their expressions match the formulas of general formalism.

 \section{Conclusion}
\label{sec:conclusion}

In this paper, we obtained expressions for the two-point function of bicolored maps, with a control on their (even) face degrees
and on their numbers of black and white vertices (via the weights $t_\bullet$ and $t_\circ$), in the form of explicit formulas
for the corresponding black or white $i$-slice generating functions. For maps with faces of degrees at most $2p+2$, 
these formulas take the form of ratios of $p\times p$ determinants, generalizing those found in \cite{BG12} in the uncolored version
of the problem. 
In the simplest case of bicolored quadrangulations ($p=1$) and hexangulations ($p=2$), the very same formulas may 
be recovered via an equivalence with appropriate hard dimer problems on segments, with parity dependent weights.

Let us now discuss a few extensions of our results. First, starting from generating functions for our families of maps, we can in some
cases, by a simple substitution, get generating functions for other families of maps, incorporating additional restrictions (such as 
the absence of multiple edges, ....). Such substitutions are particularly useful in the context of \emph{irreducible} maps
(with a control on the length of their smallest cycles) and, as shown in \cite{BG14}, the required substitution may then be determined 
so as to give (after substitution) trivial values (such as $1$ or so) to the first conserved quantities (which are non-trivial in the original problem).
This in turn provides a way to construct new equations which essentially have the same solution as the original ones,
up to a redefinition of the weights, see \cite{BG14}. We can play this game starting, say, from our solution of bicolored quadrangulations
to derive new sets of integrable equations, with explicit solutions.

Another extension deals with the case of $p$-constellations for $p>2$. Recall that a $p$-constellation
is a face-bicolored map (say with dark and light faces) such that all dark faces have degree $p$ and all
light faces have a degree multiple of $p$. In the planar case, we may naturally color the vertices of these
maps in $p$ colors in increasing (resp.\ decreasing) order clockwise around the black (resp.\ the white faces).
Our bicolored maps are nothing but $2$-constellations (where the dark faces -- of degree $2$ -- have been squeezed into 
edges) and we may try to extend our results to $p$-constellations with $p>2$, giving different weights to the vertices, 
according to their color. We discuss below an example with $p=3$.

\subsection{Other integrable equations from the solution of bicolored quadrangulations}
\label{sec:otherintregquad}
Using the conserved quantities \eqref{eq:F1quad} for bicolored quadrangulations, the corresponding $B_i$'s and $W_i$'s
may alternatively be viewed as the solutions of 
\begin{equation*}
\begin{split}
& F_1^\bullet=W_{i}-\frac{1}{t_\bullet}B_{i-1}W_{i}B_{i+1} = W_1\\
& F_1^\circ=B_{i}-\frac{1}{t_\circ}W_{i-1}B_{i}W_{i+1} = B_1\\
\end{split}
\end{equation*}
for $i\geq 1$ with $B_0=W_0=0$. Note that these equations are however weaker than the original system as they do not determine 
in practice $B_1$ and $W_1$, nor $B$ and $W$ but only the relations $B_1=B(1-W^2/t_\circ)$ and $W_1=W(1-B^2/t_\bullet)$
(while the original equations fix $B$ and $W$ via $t_\bullet = B(1-2W-B)$ and $t_\circ=W(1-2B-W)$, and the values of $B_1$ and $W_1$
follow). We may in practice eliminate $B_1$ and $W_1$ upon defining
\begin{equation*}
P_i=\frac{B_i}{B_1}\qquad Q_i=\frac{W_i}{W_1} 
\end{equation*} 
for $i\geq 0$. We indeed get for $P_i$ and $W_i$ the system of equations
\begin{equation}
\begin{split}
& P_i=1+z_\circ Q_{i-1}P_i Q_{i+1}\\
& Q_i=1+z_\bullet P_{i-1}Q_i P_{i+1}\\
\end{split}
\label{eq:firstinteg}
\end{equation}
valid for $i\geq 1$, with initial conditions $P_0=Q_0=0$, and with
\begin{equation*}
z_\circ=\frac{W_1^2}{t_\circ}\qquad
z_\bullet=\frac{B_1^2}{t_\bullet}\ .
\end{equation*}
We may in practice forget about these latter relations and consider $z_\bullet$ and $z_\circ$ as our new input.
Note that $P_1=Q_1$ are now determined,
with value $1$.
We immediately deduce from \eqref{eq:solquadexpl} the solution of \eqref{eq:firstinteg} 
\begin{equation*}
\begin{split}
& P_{2i}=P\frac{u_{2i}\bar{u}_{2i+3}}{\bar{u}_{2i+1}u_{2i+2}}\qquad Q_{2i+1}=Q\frac{\bar{u}_{2i+1}u_{2i+4}}{u_{2i+2}\bar{u}_{2i+3}}\\
& P_{2i+1}=P\frac{\hat{u}_{2i+1}u_{2i+4}}{u_{2i+2}\hat{u}_{2i+3}}\qquad Q_{2i}=Q\frac{u_{2i}\hat{u}_{2i+3}}{\hat{u}_{2i+1}u_{2i+2}}\\
\end{split}
\end{equation*}
where $P\equiv B/B_1$ and $Q\equiv W/W_1$ are the solutions of
\begin{equation*}
\begin{split}
& P=1+z_\circ Q^2 P\\
& Q=1+z_\bullet P^2 Q\ ,\\
\end{split}
\end{equation*}
and $u_i$, $\bar{u}_i$ and $\hat{u}_i$ have the same definitions as before in terms of $x$ and $c$.
As for $c$ and $x$ themselves, they can be related directly to $P$ and $Q$ via
\begin{equation*}
c=\sqrt{\frac{Q-1}{P-1}}\ , \qquad \left(x+\frac{1}{x} \right) \sqrt{(P-1)(Q-1)}=1\ .
\end{equation*}
These equations are easily obtained by rewriting $B$ and $W$ in terms of $P$ and $Q$.
Using $(P-1)/(Q^2 P)=z_\circ=W_1^2/t_\circ=(W/Q)^2/t_\circ$ and, from \eqref{eq:recurBWquad},  the relation $t_\circ=W(1-2B-W)$,
we get $(P-1)=W/(1-2B-2W)$ and, similarly, $(Q-1)=B/(1-2B-2W)$ so that $B/W=(Q-1)/(P-1)$, while \eqref{eq:charquad} 
immediately yields the above equation for $x$. Alternatively, both equations may be obtained 
directly without recourse to neither \eqref{eq:recurBWquad} nor \eqref{eq:charquad} upon simply writing $P_1=Q_1=1$ and, using their explicit expressions above 
$P_{1}=P(\hat{u}_{1}u_{4})/(u_{2}\hat{u}_{3})$ and $Q_{1}=Q(\bar{u}_{1}u_{4})/(u_{2}\bar{u}_{3})$, solving for $c$ and $x$.

From the form of \eqref{eq:firstinteg}, we can immediately interpret $P_i$ and $Q_i$ as 
generating functions for \emph{naturally embedded ternary trees in a semi-infinite line}, i.e.,\ ternary trees whose vertices 
occupy positive integer positions on a line and where each internal vertex at position $j$ has its three children  
at positions $j-1$, $j$ and $j+1$ respectively. Such trees start with a univalent (uncolored) root vertex and have their internal vertices
bicolored in black and white according to the parity of their position, and weighted by $z_\bullet$ 
and $z_\circ$ accordingly. More precisely (up to a first trivial term $1$ corresponding to the tree without internal vertices), 
$P_i$ corresponds to trees with a 
first internal vertex (i.e.,\ that attached to the univalent root vertex)
being at position $i$ and where each internal vertex whose position has the same parity as $i$ (resp. a different parity)
is white (resp. black), while $Q_i$ corresponds to a first internal vertex at position
$i$ with each internal vertex whose position has the same parity as $i$ (resp. a different parity)
being black (resp. white). Naturally embedded ternary trees in a semi-infinite line are known to appear
in the context of quadrangulations \emph{without multiple edges} (which
also correspond to nonseparable planar maps) 
 \cite{BG10,BG14,JaSc98}, and the above generating functions
would naturally appear in the bicolored version of this problem.

We may go one step further by using the next conserved quantities for bicolored quadrangulations, 
given (according to \eqref{eq:conserved} with $d=i-1$) by
\begin{equation*}
\begin{split}
& F_2^\bullet=W_{i}(W_i+B_{i+1})-\frac{1}{t_\bullet}(W_{i}+B_{i+1}+W_{i+2})B_{i-1}W_{i}B_{i+1} = W_1(W_1+B_2)\\
& F_2^\circ=B_{i}(B_i+W_{i+1})-\frac{1}{t_\circ}(B_{i}+W_{i+1}+B_{i+2})W_{i-1}B_{i}W_{i+1} = B_1(B_1+W_2)\\
\end{split}
\end{equation*}
for $i\geq 1$ with $B_0=W_0=0$. 
Dividing the first equation by $t_\circ$ and the second by $t_\bullet$, we get alternatively
\begin{equation*}
\begin{split}
& \frac{F_2^\bullet}{t_\circ}=Q_{i}(z_\circ Q_i+z_\bullet P_{i+1})-z_\bullet(z_\circ P_{i}+z_\bullet P_{i+1}+z_\circ Q_{i+2})P_{i-1}Q_{i}P_{i+1} = (z_\circ +z_\bullet P_2)\\
&\frac{F_2^\circ}{t_\bullet}=P_{i}(z_\bullet P_i+z_\circ Q_{i+1})-z_\circ(z_\bullet P_{i}+z_\circ Q_{i+1}+z_\bullet P_{i+2})Q_{i-1}P_{i}Q_{i+1} = (z_\bullet+z_\circ Q_2)\ ,\\
\end{split}
\end{equation*}
a system which however does not determine $P_2$ and $Q_2$.
Let us recall that, from \eqref{eq:FbulletFcirc}, $F_2^{\bullet}/t_\circ= F_2^{\circ}/t_\bullet$ so that 
both lines in the above system are in practice equal. In particular, $(z_\circ +z_\bullet P_2)= (z_\bullet+z_\circ Q_2)$,
so we can define
\begin{equation*}
\delta=\frac{Q_2-1}{z_\bullet}=\frac{P_2-1}{z_\circ}\ .
\end{equation*}
Using $z_\circ Q_{i-1}P_i Q_{i+1}=P_i-1$ and 
$z_\bullet P_{i-1}Q_i P_{i+1}=Q_i-1$, this system simplifies into
\begin{equation*}
\begin{split}
& z_\bullet P_{i+1}+z_\circ(Q_{i}+Q_{i+2}-Q_i Q_{i+2})=z_\bullet +z_\circ +z_\bullet z_\circ \delta\\
& z_\circ Q_{i+1}+z_\bullet(P_{i}+P_{i+2}-P_i P_{i+2})= z_\circ+z_\bullet +z_\circ z_\bullet \delta\ .\\
\end{split}
\end{equation*}
Upon setting
\begin{equation*}
R_i=\frac{P_{i+1}-1}{z_\circ \delta}\ , \qquad S_i=\frac{Q_{i+1}-1}{z_\bullet \delta}\ ,
\end{equation*}
the system becomes
\begin{equation}
\begin{split}
& R_i=1+y_\bullet S_{i-1}S_{i+1}\\
& S_i=1+y_\circ R_{i-1} R_{i+1}\\
\end{split}
\label{eq:secondtinteg}
\end{equation}
valid for $i\geq 1$ with $R_0=S_0=0$ (so that $R_1=S_1=1$), where we have set: 
\begin{equation*}
y_\bullet=z_\bullet \delta\ , \qquad y_\circ=z_\circ \delta\ .
\end{equation*}
Again we may forget about $z_\bullet$ and $z_\circ$ and consider $y_\bullet$ and $y_\circ$ as our new input
(the above change of functions being a way to get rid of the undetermined $P_2$ and $Q_2$).
Introducing $R=(P-1)/(z_\circ \delta)$ and $S=(Q-1)/(z_\bullet \delta)$, determined by the system:
\begin{equation*}
 R=1+y_\bullet S^2\ , \qquad 
 S=1+y_\circ R^2\ ,
\end{equation*}
and using the equations for $P_i$ and $Q_i$ to write $R_i=Q_i P_{i+1}Q_{i+2}/\delta$ and
$S_i=P_i Q_{i+1} P_{i+2}/\delta$ (and, accordingly, $R=Q^2P/\delta$ and $S=P^2 Q/\delta$),
we eventually get
 \begin{equation*}
\begin{split}
& R_{2i}=R\frac{u_{2i}\hat{u}_{2i+5}}{u_{2i+2}\hat{u}_{2i+3}}\qquad S_{2i+1}=S\frac{\hat{u}_{2i+1}u_{2i+6}}{\hat{u}_{2i+3}u_{2i+4}}\\
& R_{2i+1}=R\frac{\bar{u}_{2i+1}u_{2i+6}}{\bar{u}_{2i+3}u_{2i+4}}\qquad S_{2i}=S\frac{u_{2i}\bar{u}_{2i+5}}{u_{2i+2}\bar{u}_{2i+3}}\ .\\
\end{split}
\end{equation*}
As for $c$ and $x$, they are now related to $R$ and $S$ via
\begin{equation*}
c=\sqrt{\frac{R(R-1)}{S(S-1)}}\ , \qquad \left(x+\frac{1}{x}\right)\sqrt{\frac{(R-1)(S-1)}{RS}}=1\ .
\end{equation*}
These equations are obtained by writing $(P-1)=z_\circ \delta R=y_\circ R= (S-1)/R$ and, similarly, $(Q-1)=(R-1)/S$.
As before, both equations may alternatively be obtained by simply writing $R_1=S_1=1$ with their general expressions
above, and solving for $c$ and $x$.

From the form of \eqref{eq:secondtinteg}, we can immediately interpret $R_i$ and $S_i$ as generating functions for 
\emph{bicolored naturally embedded binary trees in a semi-infinite line} with black and white internal vertex weights
$y_\bullet$ and $y_\circ$. Such (uncolored) trees appear
in the context of \emph{irreducible} quadrangulations  \cite{BG14}, and the above generating functions
would therefore appear naturally in the bicolored version of this problem.

\subsection{An integrable system with $3$ colors}
\label{sec:oherintegthreecolor}
Another remarkable system of equations which may be solved is
\begin{equation*}
\begin{split}
&T_i=t_\bullet+T_i(U_{i-1}+V_{i+1})\\
&U_i=t_\circ+U_i(V_{i-1}+T_{i+1})\\
&V_i=t_{\o}+V_i(T_{i-1}+U_{i+1})\\
\end{split}
\end{equation*}
for $i\geq 1$, with $T_0=U_0=V_0=0$. For $t_\bullet=t_\circ=t_{\o}$, we have $T_i=U_i=V_i$ and the three 
equations are the same. This common equation appears in
the context of \emph{Eulerian triangulations} \cite{BDG03} which are the simplest example of $3$-constellations.
These maps are naturally divided into $3$ sublattices and the above system corresponds to giving a
different weight to the vertices according to which sublattice they belong to. Introducing the solutions $T,U,V$ of
\begin{equation*}
T=t_\bullet+T(U+V)\, \qquad
U=t_\circ+U(V+T)\, \qquad
V=t_{\o}+V(T+U)\ , 
\end{equation*}
the solution now depends on the congruence modulo $3$ of $i$. We find, for $i\geq 0$:
\begin{equation*}
\begin{split}
&\hspace{-1.3cm}T_{3i}=T \frac{(1\!-\!x^{3i})(1\!-\!\alpha x^{3i+4})}{(1\!-\!\alpha x^{3i+1})(1\!-\!x^{3i+3})}\quad
T_{3i+1}=T \frac{(1\!-\!\gamma x^{3i+1})(1\!-\!x^{3i+5}/\epsilon)}{(1\!-\!x^{3i+2}/\epsilon)(1\!-\!\gamma x^{3i+4})}\quad
T_{3i+2}=T \frac{(1\!-\!x^{3i+2}/\alpha)(1\!-\!x^{3i+6})}{(1\!-\!x^{3i+3})(1\!-\!x^{3i+5}/\alpha)}\\
&\hspace{-1.3cm}U_{3i}=U \frac{(1\!-\!x^{3i})(1\!-\!\gamma x^{3i+4})}{(1\!-\!\gamma x^{3i+1})(1\!-\!x^{3i+3})}\quad
U_{3i+1}=U \frac{(1\!-\!\epsilon x^{3i+1})(1\!-\!x^{3i+5}/\alpha)}{(1\!-\!x^{3i+2}/\alpha)(1\!-\!\epsilon x^{3i+4})}\quad
U_{3i+2}=U \frac{(1\!-\!x^{3i+2}/\gamma)(1\!-\!x^{3i+6})}{(1\!-\!x^{3i+3})(1\!-\!x^{3i+5}/\gamma)}\\
&\hspace{-1.3cm}V_{3i}=V \frac{(1\!-\!x^{3i})(1\!-\!\epsilon x^{3i+4})}{(1\!-\!\epsilon x^{3i+1})(1\!-\!x^{3i+3})}\quad
V_{3i+1}=V \frac{(1\!-\!\alpha x^{3i+1})(1\!-\!x^{3i+5}/\gamma)}{(1\!-\!x^{3i+2}/\gamma)(1\!-\!\alpha x^{3i+4})}\quad
V_{3i+2}=V \frac{(1\!-\!x^{3i+2}/\epsilon)(1\!-\!x^{3i+6})}{(1\!-\!x^{3i+3})(1\!-\!x^{3i+5}/\epsilon)}\ .\\
\end{split}
\end{equation*}
 Here $\alpha,\gamma$ and $\epsilon$ are expressed in terms of four quantities $t,u,v$ and $x$ only via
 \begin{equation*}
 \alpha=\frac{v+u\, x+t\, x^2}{t+v\, x+u\, x^2}  \qquad 
 \gamma=\frac{t+v\, x+u\, x^2}{u+t\, x+v\, x^2}  \qquad 
 \epsilon=\frac{u+t\, x+v\, x^2}{v+u\, x+t\, x^2}  
 \end{equation*} 
(note that $\alpha\gamma\epsilon=1$ and that these three quantities depend in practice only on the three quantities 
$x$, $u/t$ and $v/t$ so that we could therefore set $t=1$ without loss of generality). As for the quantities $t,u,v$ and 
$x$ themselves, they are obtained from $U,V$ and $T$ via
\begin{equation*}
T=\frac{u\,v\, x}{(u+t\, x)(t+v\, x)}\qquad U=\frac{t\,v\, x}{(u+t\, x)(v+u\, x)}\qquad V=\frac{t\,u\, x}{(v+u\, x)(t+v\, x)}
\end{equation*}
(again these three equations determine only $x$ and the ratios $u/t$ and $v/t$, which is all what we need in practice).
With these expressions, it is easy to check that $x$ satisfies the characteristic equation
\begin{equation*}
T U V \left(x^3+\frac{1}{x^3}+2\right)-(1-T-U-V)^2=0
\end{equation*}
(note that if $x$ is a solution, $\omega x$ and $\omega^2 x$, with $\omega=e^{2i\pi/3}$ are also solutions, as well as their inverses. The precise 
determination of $x$ is in practice irrelevant, for instance changing $x\to \omega x$ will result into $t\to t$, $u\to \omega u$, $v\to \omega^2 v$,
and then $\alpha\to \omega^2 \alpha$,
$\gamma\to \omega^2 \gamma$ and $\epsilon \to \omega^2 \epsilon$ so that the expressions for $T_i$, $U_i$ and $V_i$ remain the same).
The solution above was obtained by simple guessing. It should in principle be possible to obtain it 
in a constructive way via the formalism of multicontinued fractions developed in \cite{AB11}.

Let us finally mention that one can extract trivariate series expansions from these expressions, similarly
as was done in Section~\ref{sec:finalresult}. Defining $u'\equiv u/t$, $v'\equiv v/t$, the parametrization of $T, U, V$ in terms of $t,u,v,x$
is equivalent to the system
\begin{equation*}
\hspace{-1.3cm}1 = U\left(1+\frac{u'}{x}\right)+V(1+v'x),\quad u'= V\left(u'+\frac{v'}{x}\right)+T(u'+x),\quad v'= T\left(v'+\frac{1}{x}\right)+U(v'+u'x)\ .
\end{equation*}
Defining $y\equiv x^3$, $d\equiv u'x^2$, $e\equiv v'x$, this rewrites as
\begin{equation*}
y=U(y+d)+Vy(1+e),\quad  d=V(d+e)+T(d+y),\quad e=T(e+1)+U(e+d)
\end{equation*} 
so that $y,e,d$ have (positive) series expansions in $\{T,U,V\}$,
and thus also (positive) series expansions in $\{t_\bullet,t_\circ,t_{\o}\}$. 
Then the expressions of $T_i,U_i,V_i$ above
can in all cases be rewritten as rational expressions in terms of $d,e,y$, thereby well suited for series expansions.
For instance 
\begin{equation*}
T_{3i}=T\frac{(1-y^i)(1-\hat{\alpha}y^{i+1})}{(1-\hat{\alpha}y^i)(1-y^{i+1})},\ \mathrm{where}\ \hat{\alpha}=x\alpha=\frac{e+d+y}{1+e+d}\ . 
\end{equation*}

\appendix
 \section{Direct approaches for quadrangulations}
 \subsection{Using conserved quantities}
A direct expression for $B_i$ and $W_i$ can be obtained for quadrangulations by 
simply using the conserved quantities
\begin{equation}
\begin{split}
c_i&=B_{i+1}-\frac{1}{t_\circ}W_{i}B_{i+1}W_{i+2}\\
\tilde{c}_i&=W_{i+1}-\frac{1}{t_\bullet}B_{i}W_{i+i}B_{i+2}\\
\end{split}
\label{eq:cictildei}
\end{equation}
which don't depend on $i$ for all $i\geq 0$. If we look for $B_{i}$ and $W_{i}$ in the form
\begin{equation*}
\begin{split}
& B_{2i}=B\frac{u_{2i}\bar{u}_{2i+3}}{\bar{u}_{2i+1}u_{2i+2}}\qquad W_{2i+1}=W\frac{\bar{u}_{2i+1}u_{2i+4}}{u_{2i+2}\bar{u}_{2i+3}}\\
& B_{2i+1}=B\frac{\hat{u}_{2i+1}u_{2i+4}}{u_{2i+2}\hat{u}_{2i+3}}\qquad W_{2i}=W\frac{u_{2i}\hat{u}_{2i+3}}{\hat{u}_{2i+1}u_{2i+2}}\\
\end{split}
\end{equation*}
for some unknown functions $u_i$, $\bar{u}_i$ and $\hat{u}_i$, writing $c_{2i-1}=B-\frac{1}{t_\circ}W^2 B$ and 
$\tilde{c}_{2i}=W-\frac{1}{t_\bullet }B^2 W$ yields the two equations
\begin{equation*}
\begin{split}
&B(\bar{u}_{2i+3}u_{2i}-\bar{u}_{2i+1}u_{2i+2})-\frac{1}{t_\circ}W^2 B (\bar{u}_{2i-1}u_{2i+4}-\bar{u}_{2i+1}u_{2i+2})=0\\
&W(\bar{u}_{2i+1}u_{2i+4}-\bar{u}_{2i+3}u_{2i+2})-\frac{1}{t_\bullet}B^2 W (\bar{u}_{2i+5}u_{2i}-\bar{u}_{2i+3}u_{2i+2})=0\ .\\
\end{split}
\end{equation*}
Taking now
\begin{equation*}
u_{2i}=1-\lambda\, x^{2i}\qquad \bar{u}_{2i+1}=1-\mu\, x^{2i+1}\ ,
\end{equation*}
we see that the constant term (i.e.,\ the term independent of $\lambda$ and $\mu$) clearly vanishes in both equations
as well as the term proportional to $\lambda \mu$ since, in both equations, the sum of the indices is
the same (respectively $4i+3$ and $4i+5$) in all $\bar{u}\, u$ terms. As for the linear terms in $\lambda$ and $\mu$, their
vanishing implies
\begin{equation*}
\begin{split}
&B(\mu x^{3}+\lambda-\mu x -\lambda x^{2})-\frac{1}{t_\circ}W^2 B (\mu x^{-1}+\lambda x^{4}-\mu x-\lambda x^{2})=0\\
&W(\mu x+\lambda x^{4}-\mu x^{3}-\lambda x^{2})-\frac{1}{t_\bullet}B^2 W (\mu x^{5}+\lambda -\mu x^{3}-\lambda x^{2})=0\ .\\
\end{split}
\end{equation*}
Now, imposing $B_0=W_0$ leads us to choose $\lambda=1$. Eliminating $\mu$ from the above system yields then the equation
for $x$:
\begin{equation*}
B^2 W^2 \left(x^2+\frac{1}{x^2}+1\right)+t_\circ B^2+t_\bullet W^2 -t_\bullet t_\circ=0
\end{equation*}
while $\mu$ is obtained for instance via
\begin{equation*}
\mu=x\, \frac{t_\circ+W^2 x^2}{W^2+t_\circ x^2}\ .
\end{equation*}
After writing $t_\bullet=B(1-2W-B)$ and $t_\circ=W(1-2B-W)$, the equation for $x$ factors into
\begin{equation*}
\left(1-2(B+W) -\sqrt{B W}\left(x+\frac{1}{x}\right)\right)\left(1-2(B+W) +\sqrt{B W} \left(x+\frac{1}{x}\right)\right)=0
\end{equation*}
Choosing for instance to cancel the first factor (note that choosing to cancel the second factor amounts to change $x$ into $-x$, 
which in turn changes $\mu$ into $-\mu$ and leaves $u_{2i}$ and $\bar{u}_{2i+1}$ invariant), we recover precisely the characteristic equation \eqref{eq:charquad} while, after simplification
\begin{equation*}
\mu=\frac{c+x}{1+c\, x}\quad c=\sqrt{\frac{B}{W}}\ .
\end{equation*}
We end up with
\begin{equation*}
u_{2i}=1-x^{2i}\qquad \bar{u}_{2i+1}=1-\frac{c+x}{1+c\, x}x^{2i+1}\ ,
\end{equation*}
from which $B_{2i}$ and $W_{2i+1}$ follow. As for $B_{2i+1}$ and $W_{2i}$, they follow from a similar 
calculation (using now $c_{2i-1}$ and $\tilde{c}_{2i}$), leading to 
\begin{equation*}
\hat{u}_{2i+1}=1-\frac{1+c\, x}{c+ x}x^{2i+1}
\end{equation*}
 with the same $x$ and $c$.

\subsection{Using a guessing technique}
Let us now discuss another approach to guess the expressions of $B_i$ and $W_i$ for quadrangulations,
whose starting point is the perturbative method used in \cite{GEOD}. Recall that $B_i$ and $W_i$ are specified by
\begin{equation}\label{eq:Bi}
B_i=\tb+B_i(W_{i-1}+B_i+W_{i+1}),\ \ \ W_i=\tw+W_i(B_{i-1}+W_i+B_{i+1}).
\end{equation}
and that $B\equiv \lim_{i\to\infty}B_i$ and $W\equiv \lim_{i\to \infty} W_i$ are given by
\begin{equation}\label{eq:B}
B=\tb+B(2W+B),\ \ \ W=\tw+W(2B+W).\end{equation}

Write $B_i$ and $W_i$ as
$$
B_i=B(1-\sigma x^i+O(x^{2i})),\ \ \ W_i=W(1-\tau x^i+O(x^{2i})),
$$where $\sigma,\tau,x$ are series in $\tb,\tw$. Injecting into~(\ref{eq:Bi}) and extracting the terms of order $x^{i}$ (the terms of order $1$ cancel out because of~(\ref{eq:B})), we obtain the following system of two equations:
$$\left\{\begin{array}{l}
B\sigma=BW(\sigma+\tau/x)+2B^2\sigma+BW(\sigma+\tau x),\\
W\tau=BW(\tau+\sigma/x)+2W^2\tau+BW(\tau+\sigma x).
\end{array}
\right.
$$
Defining $c\equiv\tau/\sigma$, this simplifies (dividing the first line by $B\sigma$ and the second
line by $W\sigma$) as
\begin{equation}\label{eq:syst_xc}
\left\{
\begin{array}{l}
1=c\, W(x+1/x)+2(W+B),\\c=B(x+1/x)+2c(W+B).
\end{array}
\right.
\end{equation}
This system is linear in $B,W$, so that $B$ and $W$ are rational in terms of $c$ and $x$, we find
\begin{equation}\label{eq:rat_expBW}
B=\frac{xc^2}{2c^2x+cx^2+c+2x},\ \ W=\frac{x}{2c^2x+cx^2+c+2x}.
\end{equation}
Note also that $x+1/x=\frac{1-2B-2W}{cW}=c\frac{1-2B-2W}{B}$, so that $c^2=B/W$, and $x$
fits with \eqref{eq:charquad} .

We have $B_0=W_0=0$, and for $i\geq 2$, (\ref{eq:Bi}) gives
\begin{equation}\label{eq:rec}
W_i=1-\tb/B_{i-1}-W_{i-2}-B_{i-1},\ \ B_i=1-\tw/W_{i-1}-B_{i-2}-W_{i-1}.
\end{equation}
In addition it is known~\cite{Sc97} that $B_1$ and $W_1$ have rational expressions in terms of $B,W$ (using
the fact that $\tw B_1=\tb W_1$ is the series of black-rooted quadrangulations where $\tb$ marks black vertices and $\tw$ marks white vertices)
\footnote{The values of $B_1$ and $W_1$ may alternatively be obtained by use of the conserved quantities
$c_i$ and ${\tilde c}_i$ of eq.~\eqref{eq:cictildei} upon writing $B_1=c_1=\lim_{i\to \infty} c_i= B-W^2 B/t_\circ$ and $W_1=\tilde{c}_1=\lim_{i\to \infty} \tilde{c}_i= W-
B^2 W/t_\bullet$.}:
\begin{equation*}
B_1=\frac{B(1-2B-2W)}{1-2B-W},\ \ W_1=\frac{W(1-2B-2W)}{1-2W-B}.
\end{equation*}
Since $\tb=B(1-2W-B)$ and $\tw=W(1-2B-W)$ are also rational in $\{B,W\}$, we can compute iteratively
(using~(\ref{eq:rec})) rational expressions of $B_i$ and $W_i$, in terms of $\{B,W\}$, for any $i\geq 2$.
Hence we also obtain rational expressions of $\overline{B}_i\equiv B_i/B$ and $\overline{W}_i\equiv W_i/W$ in terms of $\{B,W\}$.
Now in these expressions we can substitute  $B$ and $W$ by their rational expressions in $\{x,c\}$ given by~(\ref{eq:rat_expBW}). Inspecting these rational expressions in $\{x,c\}$ (with the help of a computer algebra system) we easily recognize
\begin{equation*}
\hspace{-1.3cm}\overline{B}_{2i}=\frac{(1-x^{2i})(1+c\, x-(c+x)x^{2i+3})}{(1-x^{2i+2})(1+c\, x-(c+x)x^{2i+1})},\
\overline{W}_{2i}=\frac{(1-x^{2i})(c+x-(1+c\, x)x^{2i+3})}{(1-x^{2i+2})(c+x-(1+c\, x)x^{2i+1})},
\end{equation*}
\begin{equation*}
\hspace{-1.3cm}\overline{B}_{2i+1}=\frac{(1-x^{2i+4})(c+x-(1+c\, x)x^{2i+1})}{(1-x^{2i+2})(c+x-(1+c\, x)x^{2i+3})},\
\overline{W}_{2i+1}=\frac{(1-x^{2i+4})(1+c\, x-(c+x)x^{2i+1})}{(1-x^{2i+2})(1+c\, x-(c+x)x^{2i+3})},
\end{equation*}
so that we recover the now familiar expressions. 

The guessing technique presented here is quite robust (it can also be applied to guess the bivariate
expressions of Ambj\o rn and Budd \cite{AmBudd} for well-labelled trees counted according to the numbers of edges and local maxima,
and to guess the trivariate expressions of Section~\ref{sec:oherintegthreecolor}), but as we have seen, a crucial point is to have rational expressions in $\{B,W\}$ for $B_1$ and $W_1$, which in the present case are known in the literature
or can be obtained from conserved quantities.
It is actually possible to guess as well these rational expressions.
The first step is to compute the series expansion of $B_1$ and $W_1$ to a large order $k$
in $\{\tb,\tw\}$ (i.e., compute all the terms $\tb^r\tw^s$ of the series expansions such that
  $r+s\leq k$). To do that, we
can observe that, for $q>k$, $B_q$ (resp. $W_q$)
has the same series expansion to order $k$ as $B$ (resp. as $W$). So a possible algorithm is to compute the series expansions of $B$ and $W$ to order $k$, and then compute the series expansions to order $k$
of $B_1,W_1,\ldots,B_k,W_k$ from the closed system of equations
\begin{equation*}
\begin{array}{ll}
B_1=\tb+B_1(B_1+W_2),&W_1=\tw+W_1(W_1+B_2),\\
B_2=\tb+B_2(W_1+B_2+W_3),&W_2=\tw+W_2(B_1+W_2+B_3),\\
\ \ \ \ \ \vdots &\ \ \ \ \ \ \ \vdots\\
B_{k-1}=\tb+B_{k-1}(W_{k-2}+B_{k-1}+W_k),&W_{k-1}=\tw+W_{k-1}(B_{k-2}+W_{k-1}+B_k),\\
B_k=\tb+B_k(W_{k-1}+B_k+W),&W_k=\tw+W_k(B_{k-1}+W_k+B),
\end{array}
\end{equation*}
valid order by order in $t_\bullet$ and $t_\circ$ up to (total) order $k$. Having computed the expansions of the series $B_1$ and $W_1$ of
order $k$ in the variables $\{\tb,\tw\}$, we can
obtain, substituting $\tb$ by $B(1-B-2W)$ and $\tw$ by $W(1-W-2B)$,
 expansions of $B_1$ and $W_1$ of order $k$ in the variables $\{B,W\}$.
Out of these expansions
the function gfun:seriestoratpoly of Maple correctly guesses (for $k$ large enough)
the rational expressions
$B_1=B(1-2B-2W)/(1-2B-W)$ and $W_1=W(1-2B-2W)/(1-2W-B)$.

 \section{A derivation of eq.~\eqref{eq:dethizeroexp}}
As we mentioned, eq.~\eqref{eq:weylrep} is a standard result whose proof may be found in \cite{FH91}.
More precisely, it is a direct consequence of Proposition A.50.\  in Appendix A of \cite{FH91}.
Here we will generalize this proposition so as to prove our desired formula \eqref{eq:dethizeroexp}, by
following exactly the same sequence of arguments as in \cite{FH91}.

Our first ingredient is an extension of Lemma A.43.\ of \cite{FH91}, which we state as follows:
\begin{lem}\label{lem:A43}
For $p$ and $i$ non negative integers, let $(H_{m,n})_{0\leq m,n\leq p+i}$ and 
$(E_{m,n})_{0\leq m,n\leq p+i}$ be two lower triangular matrices of size $(p+i+1)\times (p+i+1)$,
with $1$'s along the diagonal and 
that are inverse of each other. Let $(c_\ell)_{\ell\geq 0}$ be a sequence of indeterminates. 
Define 
\begin{equation}
\begin{split}
&H^+_{m,n}=\left\{ 
\begin{matrix}
H_{m,n}+(c_0 H_{m,2p-1-n}+c_1 H_{m,2p-2-n}+\cdots + c_{p-n-1} H_{m,p}) & \hbox{if}\ n<p \\
H_{m,n} & \hbox{if}\ n\geq p \\
\end{matrix}
\right.\\
&E^-_{m,n}=\left\{ 
\begin{matrix}
E_{m,n} & \hbox{if}\ m< p \\
E_{m,n}-(c_0 E_{2p-1-m,n}+c_1 E_{2p-2-m,n}+\cdots + c_{2p-m-1} E_{0,n}) & \hbox{if}\ m \geq p\\ 
\end{matrix}
\right.\\
\end{split}
\label{eq:HEplusminus}
\end{equation}
Then 
 $(H^+_{m,n})_{0\leq m,n\leq p+i}$ and 
$(E^-_{m,n})_{0\leq m,n\leq p+i}$ are lower triangular, with $1$'s on the diagonal, and are inverse of each other.
\end{lem}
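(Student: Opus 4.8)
The plan is to realize $H^+$ and $E^-$ as products of the given matrices with explicit auxiliary unipotent lower‑triangular matrices, so that the whole statement collapses to a one‑line nilpotency computation. Concretely, introduce the single matrix $P'$ of size $(p+i+1)\times(p+i+1)$ with entries $P'_{a,b}=c_{2p-1-a-b}$ whenever $0\le b\le p-1<p\le a$ and $a+b\le 2p-1$, and $P'_{a,b}=0$ otherwise. (If $2p-1>p+i$, so that indices like $2p-1-n$ in \eqref{eq:HEplusminus} fall outside the matrix, one first pads $H$ and $E$ block‑diagonally by identity blocks; this changes nothing since the padded off‑diagonal entries vanish, consistently with lower‑triangularity.) Since every nonzero entry of $P'$ lies strictly below the diagonal (row index $\ge p>$ column index), the matrices $P:=I+P'$ and $Q:=I-P'$ are lower triangular with $1$'s on the diagonal.

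The key computational step is to verify, column by column, that $H^+=HP$ and, row by row, that $E^-=QE$. For $H^+$: the $n$-th column of $P$ is $e_n$ when $n\ge p$ and $e_n+\sum_{\ell=p}^{2p-1-n}c_{2p-1-n-\ell}\,e_\ell$ when $n<p$, so the $n$-th column of $HP$ is precisely the modification of the $n$-th column of $H$ in \eqref{eq:HEplusminus} after the substitution $\ell=2p-1-n-k$. For $E^-$: the $m$-th row of $Q$ is $e_m^{\,T}$ when $m<p$ and $e_m^{\,T}-\sum_{\ell=0}^{2p-1-m}c_{2p-1-m-\ell}\,e_\ell^{\,T}$ when $m\ge p$; here one checks that the constraint $\ell\le 2p-1-m$ together with $m\ge p$ forces $\ell\le p-1$, so these are exactly the rows appearing in the definition of $E^-$, with matching coefficients $-c_k$. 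I expect this bookkeeping — in particular the point that the two modifications are governed by the \emph{same} $P'$ up to sign — to be the only delicate part of the argument.

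Granting this, the first two assertions of the lemma are immediate: $H^+=HP$ and $E^-=QE$ are products of lower‑triangular matrices with unit diagonal, hence are themselves lower triangular with $1$'s on the diagonal. For the inverse property, observe that $P'$ has nonzero rows only among $\{p,\dots,2p-1\}$ and nonzero columns only among $\{0,\dots,p-1\}$; since these index sets are disjoint, no index can play the role of the middle index in $(P')^2$, so $(P')^2=0$. Consequently $PQ=(I+P')(I-P')=I-(P')^2=I$, and therefore
$$H^+E^-=(HP)(QE)=H(PQ)E=HE=I.$$
Being finite square matrices, $H^+$ and $E^-$ are then inverse of each other, which completes the proof. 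In short, the entire content of the lemma is the factorizations $H^+=H(I+P')$, $E^-=(I-P')E$ together with $(P')^2=0$; everything else is routine verification.
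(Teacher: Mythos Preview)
Your proof is correct, and the route is genuinely different from the paper's. The paper verifies lower--triangularity of $H^+$ and $E^-$ by inspecting which entries get modified, and then expands $(H^+E^-)_{m,n}$ directly, splitting the sum over the middle index at $k=p$ and checking that the two correction sums cancel after a reindexing. Your argument replaces this entry--by--entry computation by the factorizations $H^+=H(I+P')$ and $E^-=(I-P')E$: once these are verified (and they are, with the substitutions you indicate), the nilpotency $(P')^2=0$ --- immediate since the nonzero rows of $P'$ sit in $\{p,\dots,2p-1\}$ and the nonzero columns in $\{0,\dots,p-1\}$ --- yields both conclusions at once. In fact the paper's cancellation is nothing but associativity $(HP')E=H(P'E)$, and the absence of a ``double correction'' term in its expansion is exactly your $(P')^2=0$; so the two arguments encode the same algebra, but yours isolates the mechanism and avoids the index chase. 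Your remark on padding when $2p-1>p+i$ is also a fair point: the paper's statement (and proof) tacitly assumes all referenced indices lie in range, which holds in its application but not in full generality, and your block--diagonal extension handles this cleanly.
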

\begin{proof}
That $(H^+_{m,n})_{0\leq m,n\leq p+i}$ is lower triangular with $1$'s along the diagonal is clear 
since the indices $\ell$ of the sequence of terms $H_{m,\ell}$ added to $H_{m,n}$ when $n<p$ are all larger
than or equal to $p$ hence strictly larger than $n$, so they are $0$ when $n\geq m$.
That $(E^-_{m,n})_{0\leq m,n\leq p+i}$ is lower triangular  with $1$'s along the diagonal is clear 
since the indices $\ell$ of the sequence of terms $E_{\ell,n}$ subtracted from $E_{m,n}$ when $m\geq p$ are all lower than
or equal to $2p-1-m$ hence strictly lower than $p$, and thus strictly lower than $m$, so they are $0$ when $m\leq n$.
Now 
\begin{equation*}
\begin{split}
\hspace{-1.3cm}(H^+\cdot E^-)_{m,n}&=\sum_{k=0}^{p+i}H^+_{m,k}E^-_{k,n}
=\sum_{k=0}^{p-1} \left(H_{m,k}+\left(\cdots\right)\right) E_{k,n}+\sum_{k=p}^{p+i} H_{m,k}
\left( E_{k,n}-\left(\cdots\right)\right)\\
& = \sum_{k=0}^{p+i} H_{m,k}E_{k,n}+\sum_q c_q \left(\sum_{k=0}^{p-1-q} H_{m,2p-1-q-k}E_{k,n}-
\sum_{k=p}^{p+i}H_{m,k} E_{2p-1-q-k,n}\right)\\
& = \delta_{m,n}+\sum_q c_q \left(\sum_{r,s,\ r\geq p \atop
r+s=2p-1-q} H_{m,r}E_{s,n}-
\sum_{r',s',\ r'\geq p \atop
r'+s'=2p-1-q}H_{m,r'} E_{s',n}\right)\\
& = \delta_{m,n} \\
\end{split}
\end{equation*}
so that  $(H^+_{m,n})_{0\leq m,n\leq p+i}$ and 
$(E^-_{m,n})_{0\leq m,n\leq p+i}$ are inverse of each other.
\end{proof}
We shall use the above lemma in the particular case:
\begin{equation}
\begin{split}
&H_{m,n}=h_{m-n}\left(x_1,x_2,\cdots,x_p,\frac{1}{x_1},\frac{1}{x_2},\cdots,\frac{1}{x_p}\right)\\
&E_{m,n}=(-1)^{m-n}e_{m-n}\left(x_1,x_2,\cdots,x_p,\frac{1}{x_1},\frac{1}{x_2},\cdots,\frac{1}{x_p}\right)\\
\end{split}
\label{eq:HEchoice}
\end{equation}
where the $x_a$'s are the solutions of the characteristic equation \eqref{eq:chareq} and $h_\ell(\cdots)$ and $e_\ell(\cdots)$ denote 
respectively the homogeneous and elementary symmetric polynomials of degree $\ell$
in their $2p$ variables. The matrices 
$(H_{m,n})_{0\leq m,n\leq p+i}$ and 
$(E_{m,n})_{0\leq m,n\leq p+i}$ are clearly lower triangular, with $1$'s on the diagonal, and it is a standard property of symmetric 
polynomials that they are indeed inverse of each other (this is easily shown by looking for instance at the generating functions
of the $h_\ell$'s and of the $e_\ell$'s). 
If we now choose
\begin{equation}
c_0=c\ , \qquad c_q=-(c^2-1)(-c)^{q-1}\ \hbox{for}\ q\geq 1
\label{eq:cchoice}
\end{equation}
with $c=b/w=\sqrt{B/W}$, then, from eqs.~\eqref{eq:Ctoe} and \eqref{eq:Csum}, we deduce
\begin{equation*}
h_i^{(0)}=(B W)^{\frac{i(i+1)}{2}}C_p^{i+1} \det_{0\leq m,n \leq i} E^-_{p+m,n}
\end{equation*}
where we have used $e_{\ell}\left(x_1,x_2,\cdots,x_p,\frac{1}{x_1},\frac{1}{x_2},\cdots,\frac{1}{x_p}\right)=
e_{2p-\ell}\left(x_1,x_2,\cdots,x_p,\frac{1}{x_1},\frac{1}{x_2},\cdots,\frac{1}{x_p}\right)$ for $\ell=0,\cdots p$.

\begin{figure}
\begin{center}
\includegraphics[width=7cm]{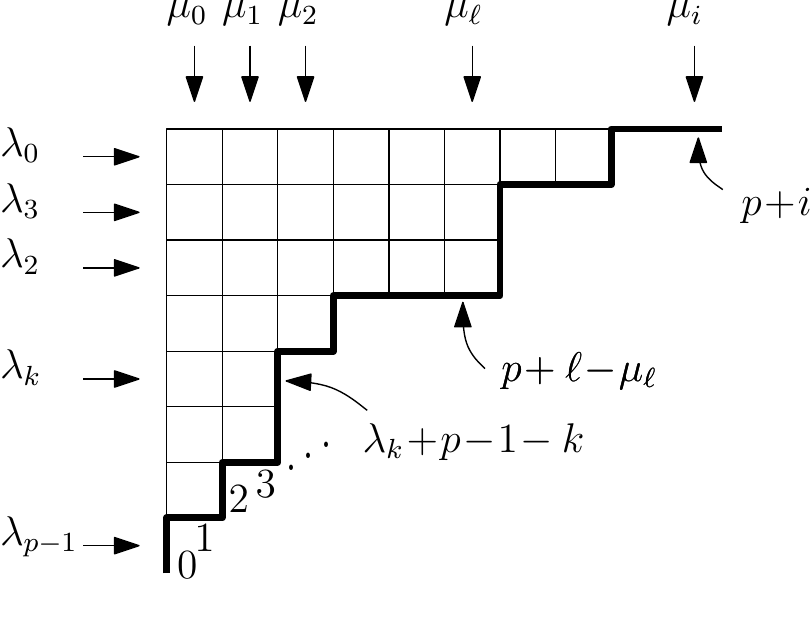}
\end{center}
\caption{A pictorial explanation of why the choice \eqref{eq:permutation} is a permutation of $(0,\cdots,p+i)$.}
\label{fig:permutation}
\end{figure}

Let us now recall Lemma A.42.\ of \cite{FH91} (see \cite{FH91} for a proof).
\begin{lem}\label{lem:A42}
Let $H^+$ and $E^-$ be $(p+i+1)\times (p+i+1)$ matrices which are inverse of each other. Let $(s_0,s_1,\cdots ,s_{p-1},s_0',s_1',\cdots ,s_i')$
and $(t_0,t_1,\cdots ,t_{p-1},t_0',t_1',\cdots ,t_i')$ be permutations of the sequence $(0,\cdots,p+i)$. Then
\begin{equation*}
\det(H^+_{s_m,t_n})_{0\leq m,n\leq p}= \epsilon \det(H^+)\ \det(E^-_{t_m',s_n'})_{0\leq m,n\leq i}
\end{equation*}
where $\epsilon$ is the product of the signs of the two permutations.
\end{lem}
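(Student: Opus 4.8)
The plan is to recognize this identity as a form of Jacobi's theorem relating the minors of a matrix to the complementary minors of its inverse, and to establish it by exterior algebra, exactly along the lines of Appendix~A of \cite{FH91}. Set $N=p+i+1$, let $V$ be an $N$-dimensional vector space with basis $e_0,\dots,e_{p+i}$, and let $\phi$ be the endomorphism of $V$ whose matrix in this basis is $H^+$, so that $\phi^{-1}$ has matrix $E^-$. The two structural inputs are standard: first, $\Lambda^{N}\phi$ acts on the line $\Lambda^{N}V$ as multiplication by $\det(H^+)$; second, the wedge product $\Lambda^{p}V\otimes\Lambda^{N-p}V\to\Lambda^{N}V$ intertwines $\Lambda^{p}\phi\otimes\Lambda^{N-p}\phi$ with $\Lambda^{N}\phi$. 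Feeding $\eta=\bigl(\Lambda^{N-p}\phi^{-1}\bigr)(\eta')$ into the second statement and invoking the first, one obtains the pairing identity
\begin{equation*}
\bigl(\Lambda^{p}\phi\bigr)(\omega)\wedge\eta'=\det(H^+)\,\omega\wedge\bigl(\Lambda^{N-p}\phi^{-1}\bigr)(\eta')
\end{equation*}
valid for all $\omega\in\Lambda^{p}V$ and all $\eta'\in\Lambda^{N-p}V$.

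The next step is to specialize $\omega=e_{s_0}\wedge\cdots\wedge e_{s_{p-1}}$ and $\eta'=e_{t'_0}\wedge\cdots\wedge e_{t'_i}$, using that $\{t'_0,\dots,t'_i\}$ is exactly the complement of $\{t_0,\dots,t_{p-1}\}$ in $\{0,\dots,p+i\}$. Expanding $\bigl(\Lambda^{p}\phi\bigr)(\omega)=\sum_{U}\det\bigl(H^+[U,S]\bigr)\,e_U$ over $p$-element index sets $U$ (with $S=(s_0,\dots,s_{p-1})$ and $H^+[U,S]$ the corresponding submatrix), wedging with $\eta'$ annihilates every term except the one with $U=\{t_0,\dots,t_{p-1}\}$, which survives with a Koszul sign and contributes $\det\bigl(H^+_{s_m,t_n}\bigr)$ times the volume form $e_0\wedge\cdots\wedge e_{p+i}$. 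On the right-hand side, expanding $\bigl(\Lambda^{N-p}\phi^{-1}\bigr)(\eta')$ in the same manner and wedging with $\omega$ keeps only the term indexed by $\{s'_0,\dots,s'_i\}$, which contributes $\det\bigl(E^-_{t'_m,s'_n}\bigr)$ times the volume form (using invariance of the determinant under transposition). Matching the coefficients of the volume form then gives $\det(H^+_{s_m,t_n})=\pm\det(H^+)\det(E^-_{t'_m,s'_n})$.

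The delicate point, and the step I expect to be the main obstacle, is to check that the global sign produced above equals $\epsilon$, the product of the signs of the permutations $(s_0,\dots,s_{p-1},s'_0,\dots,s'_i)$ and $(t_0,\dots,t_{p-1},t'_0,\dots,t'_i)$ of $(0,\dots,p+i)$. This requires a bookkeeping of the two Koszul signs together with the signs of reordering $S$, $T$ and their complements into increasing order; a direct count shows that the ``sorted-block'' contributions on the two sides cancel, leaving exactly $\mathrm{sgn}$ of the $s$-permutation from one factor and $\mathrm{sgn}$ of the $t$-permutation from the other. This sign chase is carried out in full in \cite{FH91}, and I would simply follow it there. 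Alternatively, the sign can be made transparent by conjugating $H^+$ (and hence $E^-$) by the two permutation matrices so that $S$ and $T$ become initial segments: then $\det(H^+_{s_m,t_n})$ is the determinant of the top-left block $A$ of the resulting block matrix $M=\bigl(\begin{smallmatrix}A&B\\ C&D\end{smallmatrix}\bigr)$ and $\det(E^-_{t'_m,s'_n})$ is the determinant of the bottom-right block of $M^{-1}$, so the identity follows at once from the block-determinant formula $\det M=\det(A)\det(D-CA^{-1}B)$ together with the Schur-complement identity that the bottom-right block of $M^{-1}$ equals $(D-CA^{-1}B)^{-1}$, with $\epsilon$ appearing as the product of the determinants of the two permutation matrices; since both sides of the asserted identity are polynomials in the entries of $H^+$, one may assume $A$ invertible when running this argument.
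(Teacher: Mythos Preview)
Your proposal is correct and indeed follows the exterior-algebra argument of Appendix~A in \cite{FH91}, which is precisely what the paper defers to: the paper does not give its own proof of this lemma but simply states ``see \cite{FH91} for a proof''. Your sketch (the pairing identity on $\Lambda^{p}V\otimes\Lambda^{N-p}V$, specialization to basis wedges, and the sign bookkeeping) is exactly the route taken there, and your alternative via permutation-conjugation and the Schur complement is a legitimate and arguably cleaner way to pin down $\epsilon$.
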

We will use this lemma in the following particular case: let $\lambda_0\geq \lambda_1 \geq \cdots \geq  \lambda_{p-1}\geq 0$ 
be a partition of $\sum_{k=0}^{p-1} \lambda_k$ and $\mu_0\geq \mu_1 \geq \cdots \geq \mu_{i}\geq 0$ the conjugate 
partition (with in particular $\sum_{k=0}^{p-1} \lambda_k=\sum_{k=0}^{i} \mu_k$). Then it is a classical result that
$(s_0,s_1,\cdots ,s_{p-1},s_0',s_1',\cdots ,s_i')$ with
\begin{equation}
\begin{split}
&s_k= \lambda_k+p-1-k, \quad k=0,\cdots ,p-1 \\
&s_k'=p+k-\mu_k, \quad k=0,\cdots ,i\\
\end{split}
\label{eq:permutation}
\end{equation}
forms a permutation of the sequence $(0,\cdots,p+i)$. It simply corresponds to labelling each of the $p$
lines (resp. the $i+1$ columns) of the associated Young diagram by the index of its last vertical (resp.
horizontal) segment, upon indexing this sequence of segments (which forms a broken line of length $p+i+1$) 
by $0,\cdots ,p+i$ in the natural way (see Figure \ref{fig:permutation}).
For the other permutation $(t_0,t_1,\cdots ,t_{p-1},t_0',t_1',\cdots ,t_i')$, we shall use
\begin{equation*}
\begin{split}
&t_k= p-1-k, \quad k=0,\cdots ,p-1 \\
&t_k'=p+k, \quad k=0,\cdots ,i\\
\end{split}
\end{equation*}
in which case $\epsilon=(-1)^{\sum_{k=0}^{p-1}\lambda_k}=(-1)^{\sum_{k=0}^{i}\mu_k}$ (the permutation
$(t_0,t_1,\cdots , t_{p-1},t_0',t_1',\cdots ,t_i')$ corresponds indeed in Figure \ref{fig:permutation} to the case of a Young diagram 
without boxes, so that the broken line sticks to the left and upper sides. The permutation $(s_0,s_1,\cdots ,s_{p-1},s_0',s_1',\cdots ,s_i')$
may be viewed as obtained from $(t_0,t_1,\cdots ,t_{p-1},t_0',t_1',\cdots ,t_i')$
by successive additions of boxes to the Young diagram. Adding a box corresponds to performing a transposition so, each 
time a box is added, the sign of the permutation changes
and $\epsilon$ is therefore nothing but $-1$ to the power the total number of boxes). 
In our case $\det(H^+)=1$ and
therefore
\begin{equation*}
\det(E^-_{p+m,p+n-\mu_n})_{0\leq m,n\leq i}=(-1)^{\sum_{k=0}^{p-1}\lambda_k}
\det(H^+_{\lambda_m+p-1-m,p-1-n})_{0\leq m,n\leq p}\ .
\end{equation*}

In other words, if we define, for $r\in\mathbb{Z}$, $L(r)$ as the line-vector of length $p$
whose $n$-th entry (for $n\in 0,\cdots, p-1$) is $h_{r+n+1}+c\, h_{r-n}-\sum_{q=1}^{n}(c^2-1)(-c)^{q-1}h_{r-n+q}$,
and define, for any $(r_0,\cdots,r_{p-1})\in\mathbb{Z}^p$, $M(r_0,\cdots,r_{p-1})$ as the $p\times p$ matrix
whose $m$-th row (for $m\in 0,\cdots, p-1$) is $L(r_m)$, then the equality above rewrites as
\begin{equation*}
\det(E^-_{p+m,p+n-\mu_n})_{0\leq m,n\leq i}=(-1)^{\sum_{k=0}^{p-1}\lambda_k}\det(M(r_0,\cdots,r_{p-1})),
\end{equation*}
with $r_m=\lambda_m-m-1$.
In particular, if we choose $\lambda_k=(i+1)$ for all $k=0,\cdots, p-1$, so that $\mu_k=p$ for all 
$k=0,\cdots, i$, we deduce
\begin{equation*}
h_i^{(0)}=(B W)^{\frac{i(i+1)}{2}}(-1)^{p(i+1)} C_p^{i+1} \det(M(i,i-1,\cdots,i-p+1))
\end{equation*}
involving now the determinant of a matrix of \emph{fixed size} $p\times p$, instead of a possibly arbitrarily large
size $(i+1)\times (i+1)$.

To prove eq.~\eqref{eq:dethizeroexp}, it remains to give an explicit form of this latter determinant in terms of the $x_a$'s and $c$.
Let us define
\begin{equation*}
\zeta_a(r)\equiv x_a^r-x_a^{-r}
\end{equation*}
for $a=1,\cdots p$ and any integer $r\geq 0$.
Then, Lemma A.54.\ of \cite{FH91} states that
\begin{lem}
\begin{equation*}
\zeta_a(r)=\left(h_{r-p}, h_{r-p+1}+h_{r-p-1}, \cdots, h_{r-1}+h_{r-2p+1}\right)\cdot \bar{E}\cdot \left(
\begin{matrix}
\zeta_a(p)\\ \vdots \\ \zeta_a(1) \\
\end{matrix}
\right)
\end{equation*}
with $h_{\ell}=h_{\ell}\left(x_1,x_2,\cdots,x_p,\frac{1}{x_1},\frac{1}{x_2},\cdots,\frac{1}{x_p}\right)$ and $\bar{E}=(E_{m,n})_{0\leq m,n\leq p-1}$
the $p\times p$ matrix with matrix elements $E_{m,n}$ as in eq.~\eqref{eq:HEchoice}.
\end{lem}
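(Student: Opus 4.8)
The plan is to prove the identity by the mechanism used in Appendix~A of \cite{FH91}: exhibit a linear recurrence in the index $r$ satisfied by both sides, and then check finitely many initial values. I would first record the two structural facts that make everything run. Since $x_1,\dots,x_p,1/x_1,\dots,1/x_p$ are exactly the $2p$ roots of the palindromic polynomial $e(t):=\sum_{\ell=0}^{2p}(-1)^\ell e_\ell t^\ell=\prod_{a=1}^p(1-x_at)(1-t/x_a)$, one has $e_\ell=e_{2p-\ell}$ (because $e_{2p}=\prod_a x_a\cdot x_a^{-1}=1$) and, dually, $e(t)h(t)=1$ with $h(t):=\sum_{\ell\ge 0}h_\ell t^\ell$, i.e.\ $\sum_{\nu\ge 0}(-1)^\nu e_\nu h_{s-\nu}=\delta_{s,0}$. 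Moreover, evaluating $e$ at $t=x_a$ and at $t=1/x_a$, multiplying each by a common power of the argument and subtracting, one gets (using $\zeta_a(-m)=-\zeta_a(m)$ and $\zeta_a(0)=0$) that $\zeta_a$ satisfies, for every $r\in\mathbb Z$,
\begin{equation*}
\zeta_a(r)=-\sum_{\nu=1}^{2p}(-1)^\nu e_\nu\,\zeta_a(r-\nu).
\end{equation*}

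Next I would show that the right-hand side of the Lemma, call it $\Psi_a(r)$, obeys the \emph{same} recurrence once $r$ is large enough. Writing $\Psi_a(r)=\rho(r)\cdot\bar E\cdot(\zeta_a(p),\dots,\zeta_a(1))^{\mathsf T}$, the column vector does not depend on $r$, so it suffices to control the row vector $\rho(r)$ whose entries are the $h_\ell$'s of the statement. Applying the convolution identity $\sum_\nu(-1)^\nu e_\nu h_{s-\nu}=\delta_{s,0}$ entry by entry, one finds that $\rho(r)+\sum_{\nu=1}^{2p}(-1)^\nu e_\nu\,\rho(r-\nu)$ vanishes for $r$ large (for the remaining small values of $r$ it is an explicit standard basis vector coming from the $\delta_{s,0}$ terms). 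Hence $\Psi_a(r)=-\sum_{\nu=1}^{2p}(-1)^\nu e_\nu\Psi_a(r-\nu)$ for all sufficiently large $r$, so $\Psi_a$ and $\zeta_a$ solve the same order-$2p$ linear recurrence from some point on.

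It then remains to verify $\Psi_a(r)=\zeta_a(r)$ for the finitely many initial values; the recurrence propagates the equality to all $r\ge 0$. The value $r=0$ is immediate since $\rho(0)=0$. For the remaining initial values I would expand $\Psi_a(r)=\sum_{j=1}^p\beta_j(r)\,\zeta_a(j)$, read off the coefficients $\beta_j(r)$ from the unipotent lower-triangular matrix $\bar E$ of \eqref{eq:HEchoice}, and match them against the coefficients obtained by reducing $\zeta_a(r)$ to the ``seed'' values $\zeta_a(1),\dots,\zeta_a(p)$ through the recurrence above; this identification is forced because there is no nontrivial linear relation among $\zeta_a(1),\dots,\zeta_a(p)$ with symmetric-function coefficients (such a relation would, via antisymmetry, amount to a palindromic Laurent polynomial of span $2p$ vanishing on all the $x_a^{\pm1}$, i.e.\ proportional to $t^{-p}e(t)$, which is symmetric not antisymmetric). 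Here the palindromy $e_\ell=e_{2p-\ell}$ and the fact that $\bar E$ is the inverse of the corresponding complete-homogeneous matrix (the $p\times p$ restriction of the $H,E$ matrices of \eqref{eq:HEchoice}) are exactly what is needed to collapse the truncated convolutions of $h$'s and $e$'s; for $p\le r\le 2p-1$ one additionally uses the ``downward'' form of the recurrence to rewrite $\zeta_a(r)$ in terms of the seeds.

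The main obstacle is precisely this last bookkeeping step: tracking which terms of the (a priori infinite but effectively finite) convolutions survive, and seeing how the palindromic folding $e_\ell=e_{2p-\ell}$ reorganizes the leftover boundary terms into the correct combination — this is the computational heart of the argument and the place where the structure of $\bar E$ enters essentially, mirroring the chain Lemma~\ref{lem:A43}--Lemma~\ref{lem:A42} above. A secondary, routine point is that the cleanest form of the argument assumes the $2p$ numbers $x_a^{\pm1}$ are distinct; the general case follows by a polynomial-identity (continuity) argument, exactly as in \cite{FH91}.
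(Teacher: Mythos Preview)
The paper does not give its own proof of this lemma; it quotes it verbatim as Lemma~A.54 of \cite{FH91} and then uses it as a black box. So there is no in-paper argument to compare against, only the approach in \cite{FH91} itself.

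Your strategy---show that both sides satisfy the same order-$2p$ linear recurrence $\sum_{\nu=0}^{2p}(-1)^\nu e_\nu\,(\cdot)(r-\nu)=0$ in $r$, and then match finitely many initial values---is sound and is essentially the mechanism of \cite{FH91}. The recurrence part of your sketch is correct: the identity for $\zeta_a$ follows from $e(x_a)=e(1/x_a)=0$, and the identity for the right-hand side $\Psi_a(r)$ follows entry by entry from $\sum_\nu(-1)^\nu e_\nu h_{s-\nu}=\delta_{s,0}$, valid for $r\ge 2p$.

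The genuine gap is that you do not carry out the initial-value verification; calling it ``computational bookkeeping'' and ``the computational heart'' is not a proof. In fact the range $1\le r\le p$ is short once you notice the right structure: since $h_\ell=0$ for $\ell<0$, the row vector collapses to $\rho(r)_n=h_{n-(p-r)}$, which is precisely the $(p-r)$-th row of the triangular matrix $(h_{n-m})_{0\le m,n\le p-1}$; multiplying by its inverse (the $\bar E$-matrix, up to the row/column convention in force) yields the standard basis vector in position $p-r$, hence $\Psi_a(r)=\zeta_a(r)$ on the nose. This replaces your coefficient-matching argument by a one-line use of $\bar H\bar E=I$. For $p+1\le r\le 2p-1$, the boundary terms $\delta_{r,p\pm n}$ produced by the convolution identity are exactly what is needed to reduce to the already-verified range via the recurrence for $\zeta_a$, but you should write this out rather than defer it. Your independence argument for $\zeta_a(1),\dots,\zeta_a(p)$ is also not needed once the seed values are checked directly.
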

A direct consequence of this lemma is that, if we now define, for $r\geq 0$
\begin{equation*}
\xi_a(r)\equiv c\, \zeta_a(r)+\zeta_a(r+1)\ ,
\end{equation*}
we have
\begin{equation*}
\begin{split}
&\xi_a(r)=\left(\eta_{r-p}, \eta_{r-p+1}+\eta_{r-p-1}, \cdots, \eta_{r-1}+\eta_{r-2p+1}\right)\cdot \bar{E}\cdot \left(
\begin{matrix}
\zeta_a(p)\\ \vdots \\ \zeta_a(1) \\
\end{matrix}\right)
\\
&\hbox{where}\ \eta_r=c \, h_r+h_{r+1}\ . \\
\end{split}
\end{equation*}
In other words, if we define for $r\in \mathbb{Z}$, $\widehat{L}(r)$ as the line-vector
$(\eta_r,\eta_{r+1}+\eta_{r-1},\cdots,\eta_{r+p-1}+\eta_{r-p+1})$, then for any $r\geq -p$,
\begin{equation*}
\xi_a(p+r)=\widehat{L}(r)\cdot \overline{E}\cdot
\left(
\begin{matrix}
\zeta_a(p)\\ \vdots \\ \zeta_a(1) \\
\end{matrix}
\right)\ .
\end{equation*}
Hence, if we define, for any integers $r_0,\cdots,r_{p-1} $, $\widehat{M}(r_0,\cdots,r_{p-1})$ as the $p\times p$ matrix whose $m$-th row is $\widehat{L}(r_m)$, then, since $\det(\bar{E})=1$, we have (for $r_0,\cdots,r_{p-1}$ all at least $-p$)
\begin{equation*}
\det(\widehat{M}(r_0,\cdots,r_{p-1}))=
\frac{\det\limits_{1\leq a',a \leq p} \xi_a(p+r_{a'-1})}{ \det\limits_{1\leq a',a \leq p} \zeta_a (p+1-a')}
\end{equation*}
\begin{figure}
\begin{center}
\includegraphics[width=14cm]{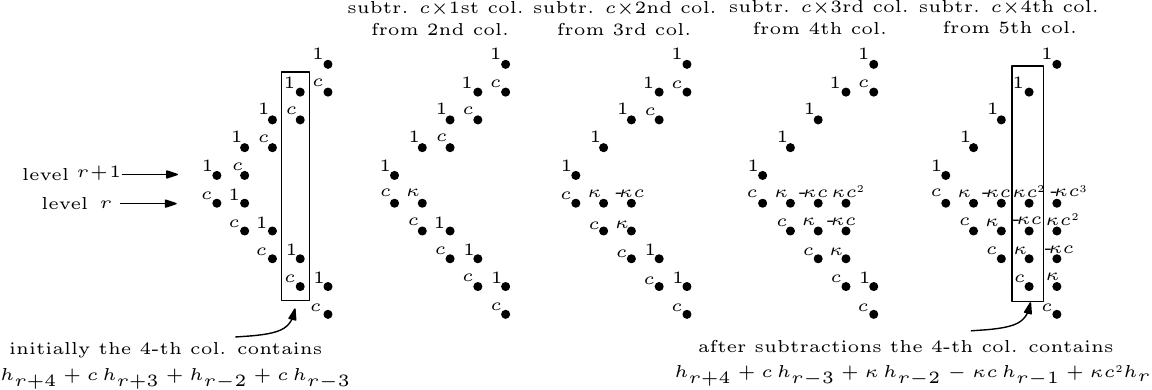}
\end{center}
\caption{A pictorial representation of the successive (from left to right) actions of $\tau_1,\tau_2,\cdots$ on $L(r)$.
The $(n+1)$-th column represents the set of indices $\ell$ of the various $h_\ell$ entering the linear combination 
of the $n$-th entry of $L(r),\tau_1 L(r),\tau_2\circ\tau_1L(r),\cdots$ (from left to right). Each appearing index is represented
by a dot at the corresponding height level beside which we indicate the corresponding coefficient. Here,
we use the short hand notation $\kappa=1-c^2$. For instance, we indicated the entry of the $4$th column (corresponding to $n=3$)
before and after subtractions (note that the $n$-th entry does not change after the $n$-th subtraction).}
\label{fig:columnoperations}
\end{figure}
Now, looking at Figure \ref{fig:columnoperations}, we have for any $r\in\mathbb{Z}$
\begin{equation*}
L(r)=\tau_{p-1}\circ\tau_{p-2}\circ\cdots\circ\tau_1(\widehat{L}(r)),
\end{equation*}
where for $n \in 1,\cdots, p-1$, $\tau_n$ is the operator (on $p$-line-vectors) that subtracts $c$ times the $(n-1)$-th entry from the $n$-th entry.
Hence, 
\begin{equation*}
M(r_0,\cdots,r_{p-1})=\tau_{p-1}\circ\tau_{p-2}\circ\cdots\circ\tau_1(\widehat{M}(r_0,\cdots,r_{p-1})),
\end{equation*}
where $\tau_n$ is now the operator on $p\times p$ matrices that subtracts $c$ times the $(n-1)$-th column from the $n$-th column. 
Since these column operations do not change the determinant, we have
\begin{equation*}
\det(M(r_0,\cdots,r_{p-1}))=\det(\widehat{M}(r_0,\cdots,r_{p-1}))
\end{equation*}
and in particular,
\begin{equation*}
\begin{split}
\det(M(i,i-1,\cdots,i-p+1))&=\frac{\det\limits_{1\leq a',a \leq p} \xi_a(i+(p+1-a'))}{\det\limits_{1\leq a',a \leq p} \zeta_a(p+1-a')}\\
&=\frac{\det\limits_{1\leq a',a \leq p} \xi_a(i+a')}{ \det\limits_{1\leq a',a \leq p} \zeta_a(a')}\\
&= \frac{\det\limits_{1\leq a',a\leq p} ((x_a+c) x_a^{i+a'}- (1/x_a+c) x_a^{-(i+a')})}
{\det\limits_{1\leq a',a\leq p} (x_a^{a'}-x_a^{-a'})})\\
&=\prod_{a=1}^p (1+ c\, x_a) \frac{\det\limits_{1\leq a',a\leq p} (\gamma_a x_a^{i+a'}-x_a^{-(i+1+a')})}
{\det\limits_{1\leq a',a\leq p} (x_a^{a'}-x_a^{-a'})})\\
&\qquad \hbox{where}\ \gamma_a=\frac{c+x_a}{1+c x_a}\\
\end{split}
\end{equation*}
from which (up to simple transposition) eq.~\eqref{eq:dethizeroexp} follows.

\section{Generating functions for hard dimers}
\begin{figure}
\begin{center}
\includegraphics[width=14cm]{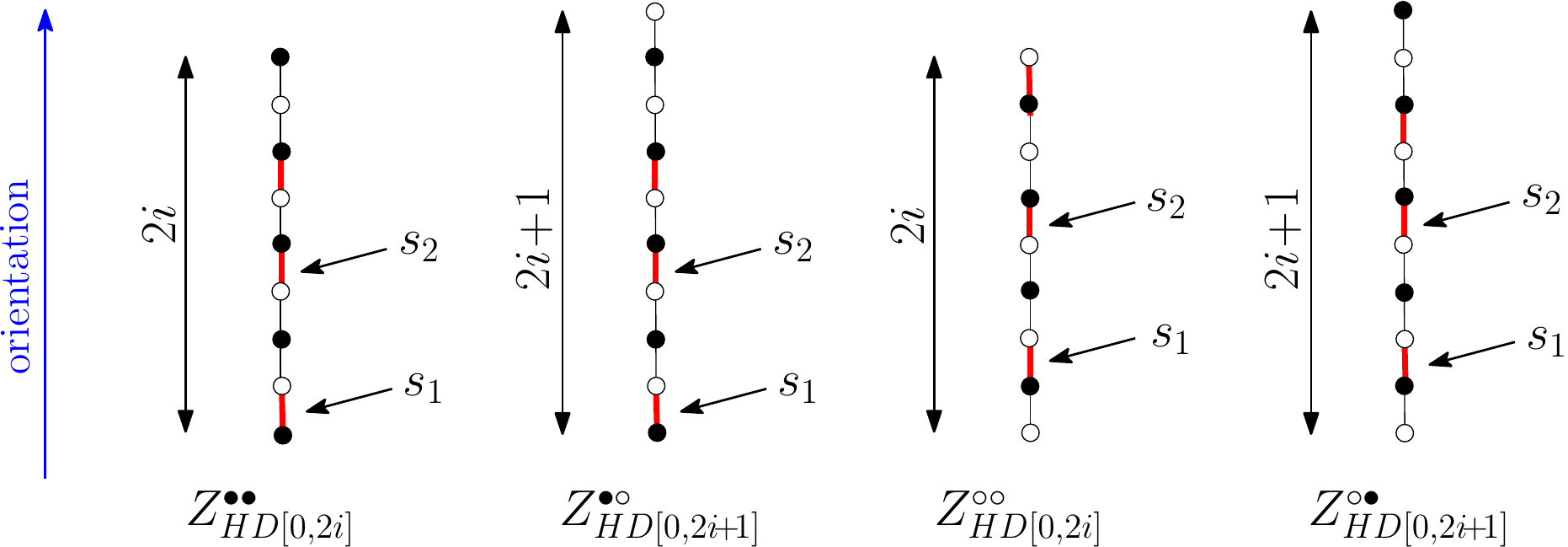}
\end{center}
\caption{Example of configurations of hard dimers contributing to $Z_{HD[0,2i]}^{\bullet\bullet}$, 
$Z_{HD[0,2i+1]}^{\bullet\circ}$, $Z_{HD[0,2i]}^{\circ\circ}$ and $Z_{HD[0,2i+1]}^{\circ\bullet}$ respectively.
The bicolored segments are viewed as oriented from bottom to top. A weight 
$s_1$ is assigned to each dimer (in red) lying on a link oriented (from bottom to top) from a black to a white node
 and a weight $s_2$
is assigned to each dimer lying on a link oriented from a white to a black node.}
\label{fig:harddimers}
\end{figure}
In this section, we shall derive a number of generating functions for hard dimers on bicolored segments.
We denote by $Z_{HD[0,2i]}^{\bullet\bullet}\equiv Z_{HD[0,2i]}^{\bullet\bullet}(s_1,s_2)$ the generating
function of hard dimers on an oriented segment (a finite oriented linear graph) made of $2i$ links whose 
nodes are bicolored alternatively in black and white and whose first and last node are black.  Each link
of the segment may be occupied by a dimer or not, with the constraint that \emph{a node is incident 
to at most one dimer}. A weight 
$s_1$ is assigned to each dimer lying on a link oriented from a black to a white node and a weight $s_2$
is assigned to each dimer lying on a link oriented from a white to a black node, see Figure \ref{fig:harddimers}. We also introduce
the generating functions $Z_{HD[0,2i]}^{\circ\circ}$, $Z_{HD[0,2i+1]}^{\bullet\circ}$ and $Z_{HD[0,2i+1]}^{\circ\bullet}$
with obvious definitions.

We introduce the parametrization
\begin{equation*}
s_1=- \frac{x}{(c+x)(1+c\, x)}\qquad s_2=-\frac{c^2x}{(c+x)(1+c\, x)}\ .
\end{equation*}
It is achieved by taking for instance
\begin{equation*}
c=\sqrt{\frac{s_2}{s_1}}\qquad x+\frac{1}{x}=-\frac{1+s_1+s_2}{c\, s_1}\ .
\end{equation*}
Note that exchanging $s_1$ and $s_2$ simply amounts to change $c$ into $1/c$, keeping $x$ unchanged.
Note also that $x$ is only defined up to $x\leftrightarrow 1/x$ and that,  for $(c,x)$ defined as above, our parametrization 
for $s_1$ and $s_2$ would have been realized as well by choosing $(-c,-x)$ instead. The reader is invited to verify that
all the final formulas below for our hard dimer generating functions are in practice invariant under
$x\leftrightarrow 1/x$ and under $(c,x)\leftrightarrow (-c,-x)$.

Upon decomposing the generating function according to the nature (i.e.,\ occupied by a dimer or not) of the
last link, we may write
\begin{equation*}
\begin{split}
&Z_{HD[0,2i]}^{\bullet\bullet}=Z_{HD[0,2i-1]}^{\bullet\circ}+s_2\, Z_{HD[0,2i-2]}^{\bullet\bullet}\\
&Z_{HD[0,2i+1]}^{\bullet\circ}=Z_{HD[0,2i]}^{\bullet\bullet}+s_1\, Z_{HD[0,2i-1]}^{\bullet\circ}\\
\end{split}
\end{equation*}
From the first equation for $i$ and $i+1$, we get 
\begin{equation}
\begin{split}
&Z_{HD[0,2i-1]}^{\bullet\circ}=Z_{HD[0,2i]}^{\bullet\bullet}-s_2\, Z_{HD[0,2i-2]}^{\bullet\bullet}\\
&Z_{HD[0,2i+1]}^{\bullet\circ}=Z_{HD[0,2i+2]}^{\bullet\bullet}-s_2\, Z_{HD[0,2i]}^{\bullet\bullet}\\
\label{eq:oddcase}
\end{split}
\end{equation}
and plugging these values in the second equation yields
\begin{equation*}
Z_{HD[0,2i+2]}^{\bullet\bullet}=(1+s_1+s_2) Z_{HD[0,2i]}^{\bullet\bullet}-s_1\, s_2 \, Z_{HD[0,2i-2]}^{\bullet\bullet}
\end{equation*}
for all $i\geq 0$, with initial conditions $Z_{HD[0,0]}^{\bullet\bullet}=1$ and $Z_{HD[0,-2]}^{\bullet\bullet}=0$ 
(which implies $Z_{HD[0,2]}^{\bullet\bullet}=(1+s_1+s_2)$ as wanted).
Setting 
\begin{equation*}
\tau_i=(-c\, s_1)^{-i}Z_{HD[0,2i]}^{\bullet\bullet}
\end{equation*}
and using the above parametrization of $s_1$ and $s_2$ (note in particular that $(-c\, s_1)^2=s_1s_2$), the equation reads
\begin{equation*}
\tau_{i+1}=\left( x+\frac{1}{x}\right) \tau_i-\tau_{i-1}
\end{equation*}
for $i\geq 0$ with $\tau_0=1$ and $\tau_{-1}=0$. Its solution is well known to be 
\begin{equation*}
\tau_i=\frac{x^{i+1}-x^{-(i+1)}}{x-x^{-1}}
\end{equation*}
which gives eventually
\begin{equation*}
Z_{HD[0,2i]}^{\bullet\bullet}=\left(\frac{c}{(c+x)(1+c\, x)}\right)^i \frac{1-x^{2i+2}}{1-x^2}\ .
\end{equation*}
As for $Z_{HD[0,2i+1]}^{\bullet\circ}$, it is obtained from the second line of eq.~\eqref{eq:oddcase}
which after simplification gives
\begin{equation*}
Z_{HD[0,2i+1]}^{\bullet\circ}=(1+c\, x)\left(\frac{c}{(c+x)(1+c\, x)}\right)^{i+1} \frac{1-\frac{c+x}{1+c\, x} x^{2i+3}}{1-x^2}\ .
\end{equation*}
Finally, $Z_{HD[0,2i]}^{\circ\circ}$ and $Z_{HD[0,2i+1]}^{\circ\bullet}$ are obtained by changing $c$ into $1/c$, 
namely
\begin{equation*}
\begin{split}
&Z_{HD[0,2i]}^{\circ\circ}=\left(\frac{c}{(c+x)(1+c\, x)}\right)^i \frac{1-x^{2i+2}}{1-x^2}\\
&Z_{HD[0,2i+1]}^{\circ\bullet}=\left(1+\frac{x}{c}\right)\left(\frac{c}{(c+x)(1+c\, x)}\right)^{i+1} \frac{1-\frac{1+c\, x}{c+ x} x^{2i+3}}{1-x^2}\ .
\end{split}
\end{equation*}
Note that the fact that $Z_{HD[0,2i]}^{\bullet\bullet}=Z_{HD[0,2i]}^{\circ\circ}$ is obvious by reversing the orientation
of the segment and exchanging the colors. For $c=1$ ($s_1=s_2$) all above formulas match well-known
expressions for dimer generating functions on uncolored segments.

\section*{Acknowledgements} We thank J. Bouttier for very useful discussions. The work of
\'EF was partly supported by the ANR grant  
``Cartaplus'' 12-JS02-001-01 and the ANR grant ``EGOS'' 12-JS02-002-01.

\bibliographystyle{plain}
\bibliography{gen2pbicolored}

\end{document}